\newtheorem{lm}{Lemma}[section]
\newtheorem{thm}{Theorem}[section]
\newtheorem{rmk}{Remark}
\numberwithin{equation}{section}
\subjclass[2020]{Primary: 34K19; Second: 34K25.}
\keywords{neutral differential equation; inertial manifold; small delay; Henry's lemma; Fa\`a di Bruno Formula.
}
\author{Shuang Chen}
\address[Shuang Chen]
{School of Mathematics and Statistics \& Center for Mathematical Sciences  \\
Huazhong University of Sciences and Technology\\
Wuhan, Hubei 430074, P. R. China}
\email[S.~Chen]{schen@hust.edu.cn, matschen@163.com}
\author{Jun Shen}
\address[Jun Shen]
{School of Mathematics \\
Sichuan University\\
Chengdu, Sichuan 610064, P. R. China}
\email[J.~Shen]{junshen85@163.com}
\begin{document}

\begin{abstract}
In this paper, we study the dynamical  behaviors of neutral differential equations with small delays.
We first establish the existence and smoothness of the global inertial manifolds
for these equations. Then we further prove the smoothness of inertial manifolds with respect to small delays
for a certain class of neutral differential equations.
The method can be also applied to deal with more general small-delay systems.
Finally, we apply our main results to the van der Pol oscillator model
with small delay.

\end{abstract}
\title[Smooth inertial manifolds for NDE with small delays]{Smooth inertial manifolds for neutral differential equations with small delays
}

\maketitle

\baselineskip=12pt
\section{Introduction}
\label{sec-intr}
The evolution of an actual system is likely to depend
not only on the present state but also on the historical state.
Motivated by it, differential equations with delays have been widely studied.
Compared with differential equations without delays,
an interesting problem is to study the effects of small delays on the dynamical behaviors of systems.
Much effort has been made in the past few decades,
for example,
retarded differential equations were  studied  in \cite{Arino-Pituk,Casal-Corsi-Llave-20,Chicone03,Driver1968,Driver1976,Guoetal,Ouifki,Ryabov1965},
delay partial differential equations were considered in \cite{Bessaihetal,Faria-Huang,Li-Shi,Wangetal},
specific applications were  discussed  in \cite{Campbelletal,Erneuxetal,Fowler,Hill-Shafer-18,Li-Kloeden,Mao} and so on.
Recently,
several works paid more attentions to neutral differential equations with small delays,
see \cite{Chen-Shen-20,Gyori-Pitukl,Hale-Lunel2001,Hale-Lunel2002,Liu} and the references therein.

As shown in \cite{JKHale-Verduyn},
neutral differential equations can be seen as infinite-dimensional systems
if the dynamical behavior is described in the space of continuous functions.
In the  framework of infinite-dimensional systems,
one of interesting problems is the finite-dimensional reduction.
On this topic, it is of importance to study the existence and smoothness of inertial manifold.
As defined in \cite{Foiaseatal},
the inertial manifold is
a finite-dimensional smooth submanifold of the phase space,
which is invariant with respect to the family of solution operators,
possesses the exponential tracking property,
i.e., the inertial manifold attracts any trajectory that starts outside of the manifold exponentially fast.

In this paper, we will consider the existence and smoothness of the global inertial manifolds for neutral differential equations.
For some constant $r>0$,
let $\mathcal{C}$ denote the set of all continuous maps from $[-r,0]$ into $\mathbb{R}^n$,
which is a Banach space endowed with the supremum norm $|\phi|:=\sup_{\theta\in[-r,0]}|\phi(\theta)|$.
The notation $|\cdot|$ is always used to denote norms in different spaces, but no confusion should arise.
We first consider a nonautonomous neutral differential equation of the form
\begin{eqnarray}\label{NA-NDE-1}
\frac{d}{d t}\left\{x(t)-L(t)x_{t}\right\}= F(t,x_t),
\end{eqnarray}
where the section $x_t(\theta):= x(t+\theta)$ for $\theta\in[-r,0]$,
$F:\mathbb{R}\times \mathcal{C} \rightarrow  \mathbb{R}^n$ is a continuous map and there is a constant $K>0$ such that
\begin{eqnarray*}
|F(t,\phi)-F(t,\psi)|\leq K|\phi-\psi| \ \  \mbox{ for any }t\in \mathbb{R} \ \mbox{ and }  \ \phi, \psi\in \mathcal{C},
\end{eqnarray*}
and for each $t\in \mathbb{R}$, the operator $L(t):\mathcal{C}\to \mathbb{R}^n$
is defined by
\begin{eqnarray}\label{L-expres}
L(t)\phi=\int^{0}_{-r} d\eta(t,\theta)\phi(\theta) ,\ \ \
          \phi \in \mathcal{C}.
\end{eqnarray}
As was done in \cite[p.255]{JKHale-Verduyn}, we assume that the kernel
$\eta: \mathbb{R}\times \mathbb{R}\to \mathbb{R}^{n\times n}$ is measurable and normalized so that $\eta(t,\theta)$ satisfies
$\eta(t,\theta)=\eta(t,-r)$ for $\theta\leq -r$, $\eta(t,\theta)=0$ for $\theta\geq 0$,
$\eta(t,\cdot)$ is continuous from the left on $(-r,0)$ and has bounded variation uniformly in $t$
%
such that $t\mapsto L(t)\phi$ is continuous for each $\phi\in \mathcal{C}$.
Furthermore, the kernel $\eta$ is uniformly nonatomic at zero, i.e.,
for every $\epsilon>0$. there exists a constant $\delta>0$ such that
the total variation of $\eta(t,\cdot)$ on $[-\delta,0]$ is less than $\epsilon$ for all $t\in \mathbb{R}$.
It then follows from  \cite[Theorem 8.1, p.61]{JKHale-Verduyn} and \cite[Theorem 8.3, p.65]{JKHale-Verduyn}
that the Cauchy problem of equation (\ref{NA-NDE-1}) is well-posed.
Then for each $t_0\in \mathbb{R}$ and $\phi\in\mathcal{C}$,
there exists a unique solution $x(\cdot\,;t_0,\phi): [-r,+\infty) \rightarrow \mathbb{R}$
of equation (\ref{NA-NDE-1}) with the initial value $\phi$ at $t_0$,
i.e.,  $x(\cdot\,;t_0,\phi)$ with $x_{t_0}=\phi$ is a continuous map,
$x(t)-L(t)x_{t}$ is continuously differentiable and satisfies equation (\ref{NA-NDE-1}) on $[t_0,+\infty)$.

If $L(t)\phi=(0,...,0)^T\in \mathbb{R}^{n}$ for any $t\in \mathbb{R}$ and $\phi\in \mathcal{C}$,
then equation (\ref{NA-NDE-1}) is reduced to a retarded differential equation.
Ryabov (\cite{Ryabov1965}) and Driver (\cite{Driver1968}) obtained
that retarded differential equations with small delays has a Lipschitz inertial manifold.
Later on, Chicone (\cite{Chicone03}) generalized this result to $C^{1,1}$ inertial manifold
if $F$ is $C^{1,1}$ with respect to the second variable by using the Fiber Contraction Theorem (see \cite{Hirsch-Pugh}).
However, to approximate inertial manifolds,
we need obtain the higher-order smoothness of inertial manifolds.
To this end, in current paper, we will prove that neutral differential equation (\ref{NA-NDE-1})
with small delay $r$ possesses a global $C^{k,1}$ inertial manifold
if $F$ is $C^{k,1}$ with respect to the second variable.

We further assume that $L(t)\phi= A\phi(-r)$ for any $\phi\in \mathcal{C}$
and $F(t,x_t)=f(x(t),x(t-r))$ for $t\in \mathbb{R}$ and $r>0$.
Then equation (\ref{NA-NDE-1}) is rewritten as
\begin{eqnarray}\label{A-NDE-1}
\frac{d}{d t}\left\{x(t)-Ax(t-r)\right\}= f(x(t),x(t-r)),
\end{eqnarray}
where $A$ is an $n\times n$ real matrix and $f$ is Lipschitz continuous function, i.e.,
\begin{eqnarray*}
|f(y_1,z_1)-f(y_2,z_2)|\leq K \max\{|y_1-y_2|, \ |z_1-z_2|\}
\ \  \mbox{ for any } (y_1,z_1) \mbox{ and } (y_2,z_2) \mbox{ in } \mathbb{R}^{n}\times \mathbb{R}^{n}.
\end{eqnarray*}
In the second part of this paper, we will show that
the inertial manifold of equation (\ref{A-NDE-1}) with small delay is $C^{k,1}$
with respect to the delay $r$ if $f$ is $C^{k,1}$.

When we consider the differentiability of inertial manifold with respect to the delay $r$,
we need look for an appropriate space such that inertial manifold is smooth in $r$.
As early as 1991,  on the space $W^{1,\infty}([-r,0],\mathbb{R}^n)$,
where all functions are absolutely continuous and their derivatives are essentially bounded,
Hale and Ladeira (\cite{Hale-Ladeira}) used the Uniform Contraction Principle to prove that the solutions are $C^{k-1}$ in $r$ provided that the map $f$ is $C^k$.
Then Hartung and Turi (\cite{Hartung-Turi}) studied the differentiability of solutions
with respect to parameters
for state-dependent delay equations.
Chicone (\cite{Chicone03}) used the Fiber Contraction Theorem on a certain weighted Sobolev-type space and
proved that the inertial manifolds
of retarded differential equations with small delays are $C^{1,1}$ with respect to delays.
Subsequently, Chicone (\cite{Chicone04}) further studied the smoothness of inertial manifold for the delay equation $\dot x(t)= f(x(t),x(t-r))$
by the slow manifold of the $N$th-order
ordinary differential equation obtained by replacing the right-hand side
of the delay equation by the $N$th-order Taylor polynomial
of the function $\tau\mapsto f(x(t), x(t-\tau))$ at $\tau=0$.

Unlike Chicone (\cite{Chicone03}), we
studied the smoothness of inertial manifold for equation (\ref{NA-NDE-1}) (resp. equation (\ref{A-NDE-1}))
on some $C^{k,1}$ space
endowed with weighted supremum norm. To prove the completeness of these spaces with the metrics induced by the corresponding norms,
we use the Henry's lemma (see \cite[Lemma 6.1.6]{Henry1981}),
which is widely used to study the smoothness of invariant manifolds,
see Chow and Lu \cite{Chow-Lu},
Barreira and Valls \cite{Barreira-Valls2006,Barreira-Valls2007} and Elbialy\cite{Elbialy2001}.
Moreover, to guarantee that the Contraction Mapping Principle is valid,
the multivariate Fa\`a di Bruno Formula
(see \cite[Theorem 2.1]{Constaintine-Savits})
is applied to estimate the desired derivatives.

The paper is organized as follows.
In section \ref{sec-main result}, we will state the main results in this paper, see
Theorems \ref{thm-1}-\ref{thm-2}. Before proving these results, we first make some preparations in section 3.
In sections \ref{sec-pf-thm-1}-\ref{sec-pf-thm3},
we prove Theorems \ref{thm-1}-\ref{thm-2},
respectively.
To illustrate the applications of the main results,
we study the dynamics of the van der Pol oscillator model with small delay in section 7.
In section 8, we give a discussion for further study.

\section{main results}
\label{sec-main result}
In this section, we will state the main results in this paper.
We first consider a nonautonomous neutral differential equation of the form
\begin{eqnarray}\label{NA-NDE}
\frac{d}{d t}\left\{x(t)-L(t)x_{t}\right\}= F(t,x_t),
\end{eqnarray}
where the section $x_t(\theta):= x(t+\theta)$ for $\theta\in[-r,0]$.
To study the existence and the higher-order smoothness of the inertial manifold for equation (2.1) with small delay,
we need the following hypotheses.
\begin{enumerate}
\item[{\bf (H1)}] Let $L(t)$ be given by (\ref{L-expres}) and satisfy the conditions stated in the introduction.
Assume that $\sup_{t\in \mathbb{R}}|L(t)|=M$ with $0<M<1$
and $F:\mathbb{R}\times \mathcal{C} \to \mathbb{R}^n$ is a continuous map
and $C^{k}$ ($k\in \mathbb{N}$) in the second variable.
There exist positive constants $M_j$, $j=0,1,2,...,k+1$, such that
for any $t\in \mathbb{R}$ and $\phi,\psi \in \mathcal{C}$,
$|F(t,0)|\leq M_0 e^{\lambda|t|}$, $|D_2^jF(t,\phi)|\leq M_j$ for $j=1,...,k$
and $|D_2^kF(t,\phi)-D_2^k F(t,\psi)|\leq M_{k+1}|\phi-\psi|$,
where $D_2^{j}$ denotes the {\it j-}th partial derivative with respect to the second variable.
Let the constants $r_0$ and $x^{*}$ satisfy $0<M_1r_0<H(-\ln M/(k+1))$
and $x^{*}\in \left(x_1(r_0),-\ln M/(k+1)\right) \subset  \left(x_1(r_0),x_2(r_0)\right)$,
where $H$, $x_1(r_0)$ and $x_2(r_0)$ are defined in Appendix A.
The delay $r$ and the constant $\lambda$ satisfy $0<r\leq r_0$ and $r\lambda=x^*$.
\end{enumerate}

Since $x^{*}\in (x_1(r_0),x_2(r_0))$, by Appendix A, we find
$x^{*} e^{-x^{*}}-Mx^{*}>M_1r_0$, which implies $(Mx^{*}e^{x^{*}}+M_1r_0e^{x^{*}})/x^{*}<1$.
Then from the condition $r\lambda=x^{*}$ it follows
\begin{equation}\label{x-star}
\begin{split}
Me^{r\lambda}+M_1e^{r\lambda}/\lambda =&\,(Mr\lambda e^{r\lambda}+M_1re^{r\lambda})/(r\lambda)
=(Mx^{*} e^{x^{*}}+M_1re^{x^{*}})/x^{*} \\
\leq &\, (Mx^{*}e^{x^{*}}+M_1r_0e^{x^{*}})/x^{*}<1.
\end{split}
\end{equation}
Moreover, $x^{*}< -\ln M/(k+1)$ and $r\lambda=x^*$ imply that $Me^{(k+1)r\lambda}=Me^{(k+1)x^*}<1$.
\vskip 0.2cm
Our main results on equation (\ref{NA-NDE}) are summarized as following.
\begin{thm}\label{thm-1}
Assume that {\bf (H1)} holds.
Then there exists a constant $\delta$ with $0<\delta\leq r_0$ such that for equation (\ref{NA-NDE})
with $0<r<\delta$, there is a continuous map $\Psi: \mathbb{R}\times \mathbb{R}^{n}\to \mathbb{R}^n$ satisfying that
for each $\xi\in\mathbb{R}^{n}$,
$\Psi(\cdot\,,\xi)$ is the solution of equation (\ref{NA-NDE}) with the condition $x(0)-L(0)x_{0}=\xi$ and
for each $t\in\mathbb{R}$, $\Psi(t,\cdot\,)$ is a $C^{k,1}$ map on $\mathbb{R}^{n}$.
\end{thm}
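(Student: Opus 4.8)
\emph{Proof strategy.} The plan is to realize $\Psi$ as the evaluation of the unique solution of \eqref{NA-NDE} on all of $\mathbb{R}$ with prescribed exponential growth, produced by a contraction, and then to bootstrap its dependence on $\xi$ up to $C^{k,1}$ by running a second contraction on an adapted space of $\xi$-dependent maps. First, integrating \eqref{NA-NDE} shows that $x$ is a solution on $\mathbb{R}$ with $x(0)-L(0)x_0=\xi$ if and only if $x(t)=\xi+L(t)x_t+\int_0^t F(s,x_s)\,ds$. On the Banach space $\mathcal{B}_{\lambda}:=\{x\in C(\mathbb{R},\mathbb{R}^n):\ |x|_{\lambda}:=\sup_{t\in\mathbb{R}}e^{-\lambda|t|}|x(t)|<\infty\}$ define $(\mathcal{T}_{\xi}x)(t):=\xi+L(t)x_t+\int_0^t F(s,x_s)\,ds$. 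Using $|x_t|\le e^{\lambda r}e^{\lambda|t|}|x|_{\lambda}$, the bound $\sup_t|L(t)|=M$, the fact that $F(t,\cdot)$ is $M_1$-Lipschitz (since $|D_2F|\le M_1$), and $|F(t,0)|\le M_0e^{\lambda|t|}$, one checks that $\mathcal{T}_{\xi}$ maps $\mathcal{B}_{\lambda}$ into itself and, by \eqref{x-star}, is a contraction with constant $q:=Me^{r\lambda}+M_1e^{r\lambda}/\lambda<1$ uniformly in $\xi$. Let $x(\cdot\,;\xi)$ be its fixed point and set $\Psi(t,\xi):=x(t;\xi)$. Rewriting the fixed-point identity as $x(t;\xi)-L(t)x_t(\cdot\,;\xi)=\xi+\int_0^t F(s,x_s(\cdot\,;\xi))\,ds$, the right-hand side is $C^1$ in $t$ (because $s\mapsto F(s,x_s)$ is continuous), so $\Psi(\cdot\,,\xi)$ solves \eqref{NA-NDE} and satisfies $\Psi(0,\xi)-L(0)\Psi_0(\cdot\,,\xi)=\xi$; moreover $|x(\cdot\,;\xi_1)-x(\cdot\,;\xi_2)|_{\lambda}\le|\xi_1-\xi_2|/(1-q)$ and evaluation is continuous on $\mathcal{B}_{\lambda}$, which yields the joint continuity of $\Psi$.

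For the $C^{k,1}$ dependence on $\xi$ I would lift the contraction to a space $\mathcal{X}$ of maps $X\colon\mathbb{R}\times\mathbb{R}^n\to\mathbb{R}^n$ that are $C^k$ in $\xi$, with $\sup_{t,\xi}e^{-j\lambda|t|}|D_{\xi}^{j}X(t,\xi)|<\infty$ for $0\le j\le k$ and the top derivative Lipschitz in $\xi$ with $\sup e^{-(k+1)\lambda|t|}|D_{\xi}^{k}X(t,\xi_1)-D_{\xi}^{k}X(t,\xi_2)|/|\xi_1-\xi_2|<\infty$; the level-$j$ weight $e^{-j\lambda|t|}$ is forced by the Fa\`a di Bruno term $D_2^{j}F\cdot(D_{\xi}X)^{\otimes j}$ appearing below. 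The completeness of $\mathcal{X}$ in the resulting (multi-level weighted) metric is not automatic for the top Lipschitz seminorm, and I would obtain it from Henry's lemma \cite[Lemma 6.1.6]{Henry1981}, exactly as in \cite{Chow-Lu,Barreira-Valls2006,Barreira-Valls2007,Elbialy2001}. On $\mathcal{X}$ put $(\widehat{\mathcal{T}}X)(t,\xi):=\xi+L(t)X_t(\cdot\,,\xi)+\int_0^t F(s,X_s(\cdot\,,\xi))\,ds$. Differentiating the composition, $D_{\xi}^{j}[F(s,X_s(\cdot\,,\xi))]$ is, by the multivariate Fa\`a di Bruno formula \cite[Theorem 2.1]{Constaintine-Savits}, a finite sum over partitions $\pi$ of products $D_2^{|\pi|}F(s,X_s)\prod_{B\in\pi}D_{\xi}^{|B|}X_s$; each factor $D_{\xi}^{m}X_s$ contributes a factor $e^{m\lambda r}e^{m\lambda|s|}$, so the worst term grows like $e^{j\lambda|s|}$, consistent with the chosen level-$j$ weight, and after applying $L(t)$ and integrating, the ``diagonal'' contribution to the level-$j$ bound of $\widehat{\mathcal{T}}X$ is $\big(Me^{j\lambda r}+M_1e^{j\lambda r}/(j\lambda)\big)$ times the level-$j$ bound of $X$, all other terms involving only strictly lower-order derivatives of $X$. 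Hence $\mathcal{X}$ is $\widehat{\mathcal{T}}$-invariant and, using in addition $|D_2^{k}F(t,\phi)-D_2^{k}F(t,\psi)|\le M_{k+1}|\phi-\psi|$ for the top Lipschitz seminorm, $\widehat{\mathcal{T}}$ is a contraction in the multi-level metric, provided the accumulated factor $Me^{(k+1)r\lambda}<1$ (guaranteed by {\bf (H1)}, since $x^{*}<-\ln M/(k+1)$) and, for the remaining $O(1/\lambda)$ terms, provided $r$ is below some threshold $\delta\le r_0$. Its unique fixed point $\bar X\in\mathcal{X}$ is also a fixed point of $\mathcal{T}_{\xi}$ in $\mathcal{B}_{\lambda}$ for each $\xi$, so $\bar X=\Psi$ by uniqueness in $\mathcal{B}_{\lambda}$; therefore $\Psi(t,\cdot)\in C^{k,1}(\mathbb{R}^n,\mathbb{R}^n)$ for every $t$.

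The first paragraph is routine; the heart of the argument is the regularity step, and I expect the main obstacles there to be threefold: (i) choosing the correct hierarchy of weights $e^{-j\lambda|t|}$ (and the top seminorm weight $e^{-(k+1)\lambda|t|}$) and proving completeness of $\mathcal{X}$ — the top Lipschitz seminorm is only lower semicontinuous along the natural convergence, which is precisely the situation Henry's lemma handles; (ii) bookkeeping the Fa\`a di Bruno expansion of the composition of $F$ with the substitution operator $X\mapsto X_t$, keeping track of the $\mathcal{C}$-valued arguments and of the factor $e^{\lambda r}$ generated at each order; and (iii) verifying that the accumulated exponential factors stay below $1$, which is where $0<M_1r_0<H(-\ln M/(k+1))$ and $x^{*}<-\ln M/(k+1)$, i.e.\ $Me^{(k+1)r\lambda}<1$, are used decisively, together with the smallness of $r$ to absorb the terms of order $1/\lambda$. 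The joint continuity of $\Psi$, the solution property, and the identity $\Psi(0,\xi)-L(0)\Psi_0(\cdot\,,\xi)=\xi$ all follow from the uniqueness of the fixed points in the two contractions.
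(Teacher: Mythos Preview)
Your overall strategy---reduce to the integral equation $x(t)=\xi+L(t)x_t+\int_0^tF(s,x_s)\,ds$, work in weighted spaces with level-$j$ weight $e^{-j\lambda|t|}$, use Fa\`a di Bruno for the composite derivatives, and invoke Henry's lemma---is exactly the paper's approach, and your identification of the three obstacles (i)--(iii) is accurate. But there is one genuine misstep in how you assemble these pieces.

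You write that $\widehat{\mathcal T}$ is ``a contraction in the multi-level metric''. It is not. Already at level $1$, the difference $D_\xi[\widehat{\mathcal T}X-\widehat{\mathcal T}Y]$ contains the cross term $\int_0^t\bigl(D_2F(s,X_s)-D_2F(s,Y_s)\bigr)D_\xi Y_s\,ds$, which is bounded by a constant times $|X-Y|_0\,e^{2\lambda|t|}$; after dividing by the level-$1$ weight $e^{\lambda|t|}$ you are left with an unbounded factor $e^{\lambda|t|}$. Hence $\widehat{\mathcal T}$ is not even Lipschitz from the multi-level metric to itself, and the same obstruction recurs at every level $j\ge1$. (This is also why your appeal to Henry's lemma for completeness sits awkwardly next to a multi-level metric: if the metric genuinely controlled all levels including the Lipschitz seminorm, completeness would be standard and Henry's lemma unnecessary.)

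The paper resolves this by separating \emph{invariance} from \emph{contraction}. The metric is only the level-$0$ norm $\rho(x,y)=\sup_{t,\xi}e^{-\lambda|t|}|x(t,\xi)-y(t,\xi)|$; the higher-order weighted bounds $\beta_1,\dots,\beta_{k+1}$ enter only as constraints defining a closed ball $\mathcal B_{d,\lambda}$. Henry's lemma is exactly what makes this ball $\rho$-complete. The Fa\`a di Bruno estimates (your ``diagonal plus lower-order'' computation) are used to show $\mathcal T(\mathcal B_{d,\lambda})\subset\mathcal B_{d,\lambda}$ for suitably chosen $\beta_j$ and $r<\delta$---this is where $Me^{(k+1)r\lambda}<1$ and the smallness of $r$ matter. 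The contraction itself is then checked only at level $0$, where it is your first-paragraph estimate with constant $q=Me^{r\lambda}+M_1e^{r\lambda}/\lambda<1$. With that correction your two-step plan collapses to the paper's single contraction on $(\mathcal B_{d,\lambda},\rho)$, and the rest of your outline goes through.
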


\begin{thm}\label{thm-1-2}
Assume that {\bf (H1)} holds.
Then for the solution $x(\cdot\,; 0,\phi)$ of equation (\ref{NA-NDE}) with $x_{0}=\phi$,
there exists a unique $\xi\in \mathbb{R}^n$ such that
\begin{eqnarray*}
\sup_{t\geq0}|x(t; 0,\phi)-\Psi(t,\xi)|e^{\lambda t}<+\infty.
\end{eqnarray*}
\end{thm}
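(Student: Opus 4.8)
\emph{Proof proposal.} Write $x(\cdot)=x(\cdot\,;0,\phi)$ on $[-r,\infty)$ and $\zeta=\phi(0)-L(0)\phi$, so that $x(0)-L(0)x_0=\zeta$, and for $\xi\in\mathbb{R}^n$ abbreviate $\psi^\xi:=\Psi(\cdot\,,\xi)$, the manifold solution furnished by Theorem \ref{thm-1}. The plan is to produce, by a Lyapunov--Perron type fixed point argument, a pair $(\xi,w)$ in $\mathbb{R}^n\times\mathcal{E}$, where
\[
\mathcal{E}:=\Big\{w\in C\big([-r,\infty),\mathbb{R}^n\big):\ \|w\|_\lambda:=\sup_{t\ge -r}e^{\lambda t}|w(t)|<+\infty\Big\},
\]
in such a way that $w$ is forced to coincide with $x-\psi^\xi$. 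The membership $w\in\mathcal{E}$ is then precisely the asserted estimate, and uniqueness of the fixed point will yield uniqueness of $\xi$.

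To set up the equations I would integrate (\ref{NA-NDE}) for $x$ and for $\psi^\xi$ and subtract, getting, for $t\ge0$,
\[
\big(x(t)-L(t)x_t\big)-\big(\psi^\xi(t)-L(t)\psi^\xi_t\big)=(\zeta-\xi)+\int_0^t\big(F(s,x_s)-F(s,\psi^\xi_s)\big)\,ds .
\]
If the difference $w:=x-\psi^\xi$ decays like $e^{-\lambda t}$, the left side tends to $0$, the improper integral converges, and letting $t\to\infty$ gives $\xi=\zeta+\int_0^\infty(F(s,\psi^\xi_s+w_s)-F(s,\psi^\xi_s))\,ds$; the identity then rewrites as $w(t)-L(t)w_t=-\int_t^\infty(F(s,\psi^\xi_s+w_s)-F(s,\psi^\xi_s))\,ds$ for $t\ge0$, with $w|_{[-r,0]}=\phi-\psi^\xi$. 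So I would define $\mathcal{T}(\xi,w)=(\hat\xi,\hat w)$ by the right side of the $\xi$-equation and by letting $\hat w$ be the function with $\hat w|_{[-r,0]}=\phi-\psi^\xi$ that solves $\hat w(t)-L(t)\hat w_t=-\int_t^\infty(F(s,\psi^\xi_s+w_s)-F(s,\psi^\xi_s))\,ds$ for $t\ge0$, where $\hat w$ is recovered from the implicit term $L(t)\hat w_t$ by the method of steps using $\sup_t|L(t)|=M<1$. That $\hat w$ lands in $\mathcal{E}$ should follow because the integral on the right decays like $e^{-\lambda t}$ while the weighted norm of the section map $w\mapsto w_t$ costs a factor $e^{\lambda r}$, so the inequality $Me^{r\lambda}+M_1e^{r\lambda}/\lambda<1$ from (\ref{x-star}) keeps the relevant geometric estimates summable; here one uses $|F(s,\psi^\xi_s+w_s)-F(s,\psi^\xi_s)|\le M_1|w_s|$.

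The heart of the matter, and the step I expect to be the main obstacle, is showing that $\mathcal{T}$ is a contraction on a suitable closed ball of $\mathbb{R}^n\times\mathcal{E}$ with respect to an appropriately weighted product norm. The routine contributions (from $M_1$, from the section and neutral operators) are governed exactly by $Me^{r\lambda}+M_1e^{r\lambda}/\lambda<1$. The delicate ones come from the dependence of $\psi^\xi$ on $\xi$: one meets terms such as $|D_2F(s,\psi^{\xi_1}_s+w_s)-D_2F(s,\psi^{\xi_2}_s)|\,|w_s|$, and although $\xi\mapsto\psi^\xi(t)$ may grow with $t$, a Gr\"onwall estimate on the integrated form of (\ref{NA-NDE}) gives $|\psi^{\xi_1}(t)-\psi^{\xi_2}(t)|\le Ce^{\kappa t}|\xi_1-\xi_2|$ for $t\ge0$ with a fixed $\kappa$ of order $M_1/(1-M)$, while $\lambda=x^*/r$ can be taken arbitrarily large by shrinking $\delta$ (this is compatible with, indeed built into, the constraints on $x^*$ in {\bf (H1)}); hence $\kappa<\lambda$, such terms carry an integrable factor $e^{-(\lambda-\kappa)s}$, and their total contribution to the Lipschitz constant of $\mathcal{T}$ is made small for $\delta$ small. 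One also invokes the Lipschitz ($C^{k,1}$) dependence of $\Psi$ on $\xi$ and the weighted bounds from the proof of Theorem \ref{thm-1}. Granting this, the Contraction Mapping Principle yields a unique fixed point $(\xi^*,w^*)$.

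Finally I would identify $w^*$ and settle uniqueness. Put $v:=w^*+\psi^{\xi^*}$. On $[-r,0]$ one has $v=\phi$, and combining the fixed-point relation for $w^*$ with the integrated form of (\ref{NA-NDE}) for $\psi^{\xi^*}$ (which satisfies $\psi^{\xi^*}(0)-L(0)\psi^{\xi^*}_0=\xi^*$) gives $v(t)-L(t)v_t=\xi^*+\int_0^t F(s,\psi^{\xi^*}_s)\,ds-\int_t^\infty(F(s,v_s)-F(s,\psi^{\xi^*}_s))\,ds$ for $t\ge0$; differentiating in $t$ shows $\frac{d}{dt}\{v(t)-L(t)v_t\}=F(t,v_t)$. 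Thus $v$ solves (\ref{NA-NDE}) with $v_0=\phi$, so $v=x(\cdot\,;0,\phi)$ by well-posedness, whence $w^*=x(\cdot\,;0,\phi)-\Psi(\cdot\,,\xi^*)\in\mathcal{E}$, which is the claimed inequality with $\xi=\xi^*$. Conversely, if $\xi$ is any point with $\sup_{t\ge0}|x(t;0,\phi)-\Psi(t,\xi)|e^{\lambda t}<+\infty$, then $w:=x(\cdot\,;0,\phi)-\psi^\xi\in\mathcal{E}$, and letting $t\to\infty$ in the subtracted integrated identity recovers the $\xi$-equation, so $(\xi,w)$ is a fixed point of $\mathcal{T}$; by uniqueness of that fixed point, $\xi=\xi^*$.
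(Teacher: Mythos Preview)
Your Lyapunov--Perron strategy is sound in spirit, but you have set it up in a harder way than necessary, and there is a gap in the contraction estimate. You couple the unknown $\xi$ with the difference $w=x-\psi^\xi$ and impose $\hat w|_{[-r,0]}=\phi-\psi^\xi$; this initial segment already depends on $\xi$, and in your weighted norm the Lipschitz constant of $\xi\mapsto \psi^\xi|_{[-r,0]}$ is of order $\beta_1>1$ (from (\ref{bd-xj}) in the proof of Theorem~\ref{thm-1}). This first-order cross term is not the second-order $D_2F$ discrepancy you flag as ``delicate''; it must be absorbed by rescaling the product norm and only after a further smallness assumption on $r$ that you do not verify. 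In addition, defining $\hat w$ implicitly through $\hat w(t)-L(t)\hat w_t=\cdots$ is not a method-of-steps matter for a general neutral operator (the kernel may have mass near $\theta=0$); it needs either the nonatomicity hypothesis or an inner contraction with constant $Me^{r\lambda}$, which you pass over.

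The paper avoids all of this by searching not for $(\xi,w)$ but directly for the manifold solution $y$ on the whole line: the space is
\[
\mathcal S_\lambda=\Big\{y\in C(\mathbb R,\mathbb R^n):\ \sup_{t\ge0}|y(t)-x(t)|e^{\lambda t}<\infty,\ \ \sup_{t\le0}|y(t)|e^{\lambda t}<\infty\Big\},
\]
and the operator $\mathcal Q$ is \emph{explicit}, with $L(t)y_t$ (input $y$) on the right and the known solution $x$ carrying the forward data. The contraction constant is exactly $Me^{r\lambda}+M_1e^{r\lambda}/\lambda<1$ from (\ref{x-star}), with no cross terms and no extra smallness beyond {\bf (H1)}. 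Once the fixed point $y$ is found, one simply reads off $\xi=y(0)-L(0)y_0$; the backward bound together with Remark~\ref{rk-unique} forces $y=\Psi(\cdot,\xi)$. Uniqueness of $\xi$ follows because any $\xi'$ with the tracking property gives $y'=\Psi(\cdot,\xi')\in\mathcal S_\lambda$, and one checks that $y'$ is again a fixed point of $\mathcal Q$. Your route can probably be pushed through, but it buys nothing over this direct argument and costs the two complications above.
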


\begin{rmk}
Assume that {\bf (H1)} holds.
By Theorem \ref{thm-1} and Theorem \ref{thm-1-2},
we clearly see that the graph of $\Psi$ defined in Theorem \ref{thm-1}
forms the $C^{k,1}$ inertial manifold of equation (\ref{NA-NDE}) with small delay.
\end{rmk}

We further assume that $L(t)\phi= A\phi(-r)$ for any $\phi\in \mathcal{C}$
and $F(t,x_t)=f(x(t),x(t-r))$ for $t\in \mathbb{R}$ and $r>0$.
Then equation (\ref{NA-NDE}) is rewritten as
\begin{eqnarray}\label{A-NDE}
\frac{d}{d t}\left\{x(t)-Ax(t-r)\right\}= f(x(t),x(t-r)),
\end{eqnarray}
where $A$ is an $n\times n$ real matrix.
We will show that the inertial
manifold of equation (\ref{A-NDE}) is smooth with respect to the delay $r$. To this end, we need the following
hypothesis.
\begin{enumerate}
\item[{\bf (H2)}]
The function $f: \mathbb{R}^n\times \mathbb{R}^n \to \mathbb{R}^n$ is $C^k$
and satisfies that there exist constants $M_j$, $j=1,...,k+1$, such that
$|D^jf|\leq M_j$ and $|D^{k}f(y_1,z_1)-D^{k}f(y_2,z_2)|\leq M_{k+1}|(y_1,z_1)-(y_2,z_2)|$,
where $D^{j}$ denote the {\it j-}th derivative of $f$, and the matrix $A$ satisfies $|A|=M$ with $0<M<1/(2\cdot 3^{k})$.
Let the constants $r_0$ and $x^{*}$ satisfy $0<M_1r_0<H(-\ln(2\cdot 3^{k}M)/(k+1))$
and $x^{*}\in \left(x_1(r_0),-\ln(2\cdot 3^{k}M)/(k+1)\right) \subset \left(x_1(r_0), x_2(r_0) \right)$,
where $H$, $x_1(r_0)$ and $x_2(r_0)$ are defined in Appendix A.
The constant $\lambda$ satisfies $\delta\lambda=x^{*}$ for $0<\delta\leq r_0$.
\end{enumerate}

Without loss of generality, we assume that positive integer $k\geq 2$ in {\bf (H2)}.
Note that $0<M<1/(2\cdot 3^{k})<1$. This implies that $-\ln(2\cdot 3^{k}M)/(k+1)<-\ln M$.
By Appendix A, we obtain
\begin{eqnarray}\label{x-star-1}
(Mx^{*}e^{x^{*}}+M_1\delta e^{x^{*}})/x^{*}\leq(Mx^{*}e^{x^{*}}+M_1r_0e^{x^{*}})/x^{*}<1.
\end{eqnarray}
In addition, Note that $x^{*}<-\ln(2\cdot 3^{k}M)/(k+1)$ in {\bf (H2)}. Then
\begin{eqnarray}\label{x-star-2}
2\cdot 3^{k}Me^{(k+1)x^{*}}<1.
\end{eqnarray}

\vskip 0.2cm
In the end, we summarize our main results on equation (\ref{A-NDE}) as following.
\begin{thm}\label{thm-2}
Assume that {\bf (H2)} holds. Then for a sufficiently small $\delta$ with $0<\delta\leq r_0$,
there exists a continuous map $\Psi: \mathbb{R}\times (0\,,\delta) \times \mathbb{R}^{n}\to \mathbb{R}^n$
such that
\begin{itemize}
\item[(i)] for each $(r,\xi)\in(0\,,\delta) \times \mathbb{R}^{n}$,
$\Psi(\cdot\,,r,\xi)$ is the solution of equation (\ref{A-NDE}) with the condition $x(0)-Ax(-r)=\xi$;
\item[(ii)]for each $(t,r)\in\mathbb{R}\times (0\,,\delta)$,
$\Psi(t,r,\cdot\,)$ is a $C^{k,1}$ map on $\mathbb{R}^{n}$;
\item[(iii)] for each $\xi\in \mathbb{R}^{n}$, $\Psi(\cdot\,,\cdot\,, \xi)$ is a $C^{k,1}$ map on $\mathbb{R}\times (0\,,\delta)$.
\end{itemize}
\end{thm}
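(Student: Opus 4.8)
The plan is to realize $\Psi(\cdot\,,r,\xi)$ as the unique solution of (\ref{A-NDE}) with at most $e^{\lambda|t|}$ growth and prescribed value of $x(0)-Ax(-r)$, obtained as a fixed point of an operator carrying the parameters $(r,\xi)$, and then to upgrade its regularity in stages: joint continuity in $(t,r,\xi)$, then $C^{k,1}$ in $\xi$, then $C^{k,1}$ in $(t,r)$. On the Banach space $\mathcal{B}_{\lambda}=\{x\in C(\mathbb{R},\mathbb{R}^{n}):\|x\|_{\lambda}:=\sup_{t}|x(t)|e^{-\lambda|t|}<\infty\}$ I set
\begin{equation*}
(\mathcal{T}_{r,\xi}x)(t)=Ax(t-r)+\xi+\int_{0}^{t}f(x(s),x(s-r))\,ds ,
\end{equation*}
so that a continuous $x$ is such a solution iff $\mathcal{T}_{r,\xi}x=x$. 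Since $\|Ax(\cdot-r)\|_{\lambda}\le Me^{r\lambda}\|x\|_{\lambda}$ and the Lipschitz bound on $f$ makes the integral a $(M_{1}e^{r\lambda}/\lambda)$-Lipschitz term, (\ref{x-star-1}) (with $r<\delta$ and $\delta\lambda=x^{*}$) gives a contraction constant $\le Me^{r\lambda}+M_{1}e^{r\lambda}/\lambda<1$, uniformly for $r\in(0,\delta)$. The fixed point defines $\Psi(t,r,\xi)$, which satisfies $x(0)-Ax(-r)=\xi$; continuous dependence of the uniform contraction on $(r,\xi)$ yields joint continuity of $\Psi$, proving (i). A bootstrap on the relation $\partial_{t}\Psi=A\,\partial_{t}\Psi(\cdot-r)+f(\Psi,\Psi(\cdot-r))$ (a contraction of the same type for $\partial_{t}\Psi$, iterated up to order $k$) shows $\Psi(\cdot\,,r,\xi)$ is automatically $C^{k,1}$ in $t$.

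For (ii) I differentiate $\mathcal{T}_{r,\xi}\Psi=\Psi$ in $\xi$. Since $\mathcal{T}_{r,\xi}$ is affine in $\xi$, each formal derivative $\partial_{\xi}^{j}\Psi$, $j\le k$, solves a linear fixed-point equation of the same contractive type, and the multivariate Fa\`a di Bruno formula bounds its inhomogeneous part; this places the formal derivatives in the relevant $C^{k,1}$ space with weighted-supremum norm, whose completeness is supplied by \cite[Lemma 6.1.6]{Henry1981}, and the lemma identifies them with the genuine derivatives. Hence $\Psi(t,r,\cdot\,)\in C^{k,1}(\mathbb{R}^{n})$. (Alternatively, one checks that (\ref{A-NDE}) meets {\bf (H1)} and applies Theorems \ref{thm-1}--\ref{thm-1-2}.)

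The crux is (iii). Fix $\xi$, write $X(t,r):=\Psi(t,r,\xi)$, and note that $X$ is the unique fixed point---among functions continuous on $\mathbb{R}\times(0,\delta)$ with $r$-locally uniform $\|\cdot\|_{\lambda}$-bound---of $(\mathcal{S}X)(t,r)=AX(t-r,r)+\xi+\int_{0}^{t}f(X(s,r),X(s-r,r))\,ds$. I work in the space $\mathcal{X}$ of such $X$ that are in addition $C^{k}$ in $(t,r)$ with locally Lipschitz $k$-th partials, normed by $\sum_{i+j\le k}\sup_{t,r}|\partial_{t}^{i}\partial_{r}^{j}X(t,r)|\,e^{-(i+j+1)\lambda|t|}$ together with a matching weighted Lipschitz seminorm of order $k$; completeness of $\mathcal{X}$ under the weaker norm needed to run the contraction is again provided by Henry's lemma. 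The key estimate stems from $\partial_{r}[X(t-r,r)]=(-\partial_{t}+\partial_{r})X(t-r,r)$: differentiating a delayed slot in $r$ splits it into two terms, so $D_{r}^{m}$ generates $O(3^{m})$ elementary products, and combining Leibniz' rule on $D_{r}^{m}[AX(t-r,r)]$ with the Fa\`a di Bruno formula on $D_{r}^{m}[f(X(s,r),X(s-r,r))]$ bounds $\|\mathcal{S}X\|_{\mathcal{X}}$ by a factor of the form $2\cdot3^{k}\bigl(Me^{r\lambda}+M_{1}e^{r\lambda}/\lambda\bigr)e^{k\lambda|\cdot|}$ times $\|X\|_{\mathcal{X}}$ plus lower-order products, the graded weights $e^{-(i+j+1)\lambda|t|}$ absorbing the growth of products of lower-order derivatives. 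Inequalities (\ref{x-star-1})--(\ref{x-star-2})---in particular $2\cdot3^{k}Me^{(k+1)x^{*}}<1$---are exactly what make $\mathcal{S}$ leave a ball of $\mathcal{X}$ invariant and contract there, uniformly for $r$ in compact subsets of $(0,\delta)$; its fixed point must coincide with $X$, so $X\in\mathcal{X}$. Differentiating the neutral iteration $\Psi=\xi+A\Psi(\cdot-r)+\cdots$ in $r$ produces coefficients polynomial in the iteration index, absorbed by $|A|<1$, which delivers the $C^{k,1}$ regularity of $\Psi(\cdot\,,\cdot\,,\xi)$ on $\mathbb{R}\times(0,\delta)$ and completes (iii).

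The main obstacle is precisely this last step: organizing the mixed $t$- and $r$-differentiation---through the Fa\`a di Bruno formula, the neutral expansion, and the graded weighted norms---carefully enough that the contraction constant stays below $1$. The small-delay regime ($0<r<\delta$ with $\delta$ small) and the smallness of $|A|$ quantified by $M<1/(2\cdot3^{k})$, together with the admissible interval for $x^{*}$, are exactly what reserve the room this bookkeeping requires.
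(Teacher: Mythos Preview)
Your overall strategy mirrors the paper's: the same integral operator, contraction via (\ref{x-star-1}), graded exponential weights $e^{-j\lambda|t|}$, completeness through Henry's lemma, Fa\`a di Bruno for the composite derivatives, and the combinatorial identity $\sum_{\nu_1+\nu_2=m}\binom{m}{\nu_2}2^{\nu_2}=3^{m}$ that accounts for the constant $2\cdot3^{k}$ in (\ref{x-star-2}). That part is right.

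There is, however, a real gap in your treatment of (iii). You fix $\xi$ and run the contraction on a $C^{k,1}$-ball in $\mathcal{X}$ whose radius at order zero must exceed $|\xi|$; the higher bounds $\varepsilon_{j}$ then grow with it. The invariance conditions for $\mathcal{S}$ on that ball involve, through the Fa\`a di Bruno remainder $T_{(0,|\boldsymbol{\nu}|)}$, nonlinear products of the lower-order $\varepsilon_{i}$, and these feed into the smallness required of $\delta$ (the paper's $\delta_{|\boldsymbol{\nu}|}\propto\varepsilon_{|\boldsymbol{\nu}|}/T_{(0,|\boldsymbol{\nu}|)}$). A priori your $\delta$ therefore depends on $|\xi|$, whereas the theorem asks for a single $\delta$ valid for all $\xi\in\mathbb{R}^{n}$. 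The paper resolves this by weighting \emph{also in $\xi$}: it fixes $d>1$, sets $V_{0}=\{|\xi|<d\}$, $V_{1}=\{|\xi|\ge d\}$, and on $V_{\gamma}$ uses the norms $\sup|D^{j}x|\,(e^{\lambda|t|}|\xi|^{\gamma})^{-j}$. The factor $|\xi|^{-j}$ on $V_{1}$ is exactly the rescaling that makes all the ball radii $\varepsilon_{j}$---and hence the admissible $\delta$---independent of $|\xi|$. Without this device (or an equivalent homogeneity argument) your Step (iii) does not close.

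A second, smaller point: the paper runs \emph{two} fixed-point arguments---one producing $\Psi_{1}$ with $C^{k,1}$ regularity in $(t,r)$, and another (essentially the proof of Theorem~\ref{thm-1}) producing $\Psi_{2}$ with $C^{k,1}$ regularity in $\xi$---and then identifies $\Psi_{1}=\Psi_{2}$ via uniqueness in the larger space $\mathcal{E}^{\lambda}_{d_{0}}$ (Lemma~\ref{lm-GB5-4}). Your sketch collapses this identification step, and your final sentence about ``coefficients polynomial in the iteration index, absorbed by $|A|<1$'' does not correspond to anything in the actual argument; it reads like the start of a different (Picard-iterative) approach that you do not carry out.
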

\begin{rmk}
\label{rmk-cut-off}
In many practical applications,
we are often interested in the dynamics of equation (\ref{NA-NDE}) (resp.\,(\ref{A-NDE}))
in some bounded regions, such as finding special bounded solutions including
periodic orbits, homoclinic loops and heteroclinic loops and so on.
Then we only require that the assumptions on the function $F$ (resp.\,$f$) in equation (\ref{NA-NDE}) (resp.\,(\ref{A-NDE}))
hold in some bounded regions.
In fact, by the cut-off technique we can modify the original equation (\ref{NA-NDE}) (resp.\,(\ref{A-NDE}))
such that the modified equation not only satisfies the assumptions, but also is consistent with the original one
in some large bounded regions.

On the other hand,
based on the higher-order smoothness obtained in Theorem \ref{thm-2},
we can also give an effective approximation of the inertial manifold for equation (\ref{A-NDE}).
In fact,
by the invariant property of the inertial manifold and the so-called post-Newtonian expansion
used for retarded differential equations with small delays in \cite{Chicone03,Chicone04},
we expand the restriction of equation  (\ref{A-NDE}) on the inertial manifold in a series with respect to $r$,
where a sequence of slow-fast systems is involved.
Then we could give an approximation of the inertial manifold for equation  (\ref{A-NDE}) with small delay
by analyzing the slow manifolds of these slow-fast systems.
\end{rmk}

\section{Preliminaries}
Before proving our main results, we first make some preliminaries in this section.
Let $B_{i},\,i=1,..., n$ and $E$ be Banach spaces and $B=B_1\times\cdot\cdot\cdot\times B_n$ be product space with the norm
$|v|:=\max_{1\leq i\leq n}|v_i|$ for ${v}=(v_1,...,v_n)\in B$.
Let $U$ denote the open subset of $B$ and $h$ be a $C^{k}$ map from $U$ to $E$.
As shown in \cite[p.181]{Dieudonne},
the {\it k-}th derivative $D^k h$ and the partial derivative with respect to the {\it j-}th variable $D_{j}h$
are identified to one of elements in $L^{k}(B; E)$ and $L(B_j; E)$, respectively,
where $L^{k}(B; E)$ denotes the set of all $k$  multilinear continuous maps from $B$ to $E$,
which is a Banach space equipped with the norm
$|\mathcal{L}|=\sup\left\{K\in \mathbb{R}: |\mathcal{L}(u_1,...,u_k)|\leq K |u_1|\cdot\cdot\cdot|u_k|\mbox{ for any } u_i\in B\right\}$
for each  $\mathcal{L}\in L^{k}(B;E)$,
and $L(B_j; E)$ denotes the set of all continuous linear maps from $B_{j}$ to $E$.
Let $\boldsymbol{\nu}=(\nu_1,...,\nu_n)\in \mathbb{N}_0^n$
with $|\boldsymbol{\nu}|=\nu_1+\cdot\cdot\cdot+\nu_n\leq k$ and
$\mathbb{N}_0$ denote the set of nonnegative integers.
$D^{\boldsymbol{\nu}}_{u}h:=\frac{\partial^{|\boldsymbol{\nu}|}}{\partial u_1^{\nu_1}...\partial u_n^{\nu_n}}h$
is said to be the partial derivative of order $|\boldsymbol{\nu}|$ of $h$
and assume $D^{\boldsymbol{\nu}}_{u}h=h$ for $\boldsymbol{\nu}=(0,...,0)$.
The following lemma can be found in \cite[p.182]{Dieudonne}.
\begin{lm}\label{derivt}
Let $U$ be an open set of $\mathbb{R}^n$ and $E$ denote a Banach space.
If the map $h: U \rightarrow E$ is $C^k$,
then for each $x\in U$ and $\xi_i=(\xi_{i1},...,\xi_{in})^{T}\in \mathbb{R}^n$, $i=1,...,k$, we have
$$
D^kh(x)(\xi_1,...,\xi_k)=\sum_{(j_1,...,j_k)} D_{j_1}D_{j_2}\cdot\cdot\cdot D_{j_k}h(x)\xi_{1,j_1}\cdot\cdot\cdot \xi_{k,j_k},
$$
where the sum is extended to all $n^k$ distinct sequences $(j_1,...,j_k)$ of integers
from the set of indices $\{1,...,n\}$.
\end{lm}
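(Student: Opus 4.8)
The plan is to prove the formula by induction on $k$, exploiting the canonical identification $L(\mathbb{R}^n;L^{k-1}(\mathbb{R}^n;E))\cong L^{k}(\mathbb{R}^n;E)$ together with the fact that $D^{k}h$ is obtained by differentiating the map $D^{k-1}h$ regarded as taking values in the Banach space $L^{k-1}(\mathbb{R}^n;E)$. Since $h$ is $C^k$, all partial derivatives up to order $k$ exist and are continuous, so every term on the right-hand side is well defined and the displayed expression makes sense.

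First I would settle the base case $k=1$. Writing $e_1,\dots,e_n$ for the standard basis of $\mathbb{R}^n$, the partial derivative is by definition $D_{j}h(x)=Dh(x)e_j$, so linearity of $Dh(x)$ gives
\[
Dh(x)\xi_1=Dh(x)\Big(\sum_{j_1=1}^{n}\xi_{1,j_1}e_{j_1}\Big)=\sum_{j_1=1}^{n}\xi_{1,j_1}D_{j_1}h(x),
\]
which is precisely the asserted identity for $k=1$.

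For the inductive step I would assume the formula for $k-1$ and set $g:=D^{k-1}h:U\to L^{k-1}(\mathbb{R}^n;E)$, a $C^1$ map into a Banach space. Under the identification above one has $D^kh(x)(\xi_1,\dots,\xi_k)=(Dg(x)\xi_1)(\xi_2,\dots,\xi_k)$, and the base case applied to $g$ gives $Dg(x)\xi_1=\sum_{j_1}\xi_{1,j_1}D_{j_1}g(x)$ with $D_{j_1}g(x)=D_{j_1}(D^{k-1}h)(x)$. The crucial observation is that evaluation at a fixed tuple, $\mathcal{L}\mapsto\mathcal{L}(\xi_2,\dots,\xi_k)$, is a bounded linear map from $L^{k-1}(\mathbb{R}^n;E)$ into $E$; since bounded linear maps commute with differentiation by the chain rule, the operator $D_{j_1}$ passes through this evaluation. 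Combining this with the inductive hypothesis for $D^{k-1}h$, I obtain
\[
\big(D_{j_1}g(x)\big)(\xi_2,\dots,\xi_k)=\sum_{(j_2,\dots,j_k)}D_{j_1}D_{j_2}\cdots D_{j_k}h(x)\,\xi_{2,j_2}\cdots\xi_{k,j_k}.
\]
Substituting back and merging the outer sum over $j_1$ with the inner sum over $(j_2,\dots,j_k)$ yields the single sum over all $n^k$ sequences $(j_1,\dots,j_k)$, which closes the induction.

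The main obstacle, and really the only delicate point, is the bookkeeping around the canonical identification of iterated spaces of multilinear maps and the justification that $D_{j_1}$ commutes with evaluation at fixed argument vectors. Once that commutation is established from the chain rule for the bounded linear evaluation functional, everything else is the routine relabeling of nested finite sums as one sum over index sequences.
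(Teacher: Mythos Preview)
Your induction argument is correct and is essentially the standard textbook proof of this identity. Note, however, that the paper does not supply its own proof: it simply states the lemma and cites \cite[p.182]{Dieudonne}, so there is no original argument to compare against. Your write-up is exactly the kind of proof one finds at that reference, so in effect you have reproduced what the citation points to.
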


As was done in \cite{Elbialy2000},
for each $0<\alpha\leq1$ and $k\in \mathbb{N}$,
where $\mathbb{N}$ denotes the set of positive integers,
let $C^{k,\alpha}(U,E)$ be the set of $C^k$ maps $\phi: U\to E$ satisfying
\begin{eqnarray*}
H_{\alpha}(D^k\phi):=\sup_{x\neq x',\, x,\, x'\in U}\frac{|D^k \phi(x)-D^k \phi(x')|}{|x-x'|^{\alpha}}<\infty.
\end{eqnarray*}
For any $b>0$, we define the Banach space
$$C^{k,\alpha}_{b}:=\{\phi\in C^{k,\alpha}_{b}(U,E): |\phi|_{k,\alpha}\leq b\},$$
where
$|\phi|_{k,\alpha}:=\max\left\{|\phi|_{\infty},|D\phi|_{\infty},..., |D^k\phi|_{\infty}, H_{\alpha}(D^k\phi)\right\}$
and $|\cdot|_{\infty}$ denotes the supremum norm.

The following lemma is from \cite{Elbialy2000}.
\begin{lm}\label{Henry-lm} {\rm({\bf Henry's lemma})}
{\rm (i)}
Suppose that  a sequence $\{\phi_m\}_{m=1}^{+\infty} \subset C^{k,\alpha}_{b}$
and a map $\phi: U \to E$ satisfy $|\phi_m-\phi|_{\infty}\to 0$ as $m\to +\infty$,
then $\phi\in C^{k,\alpha}_{b}$ and $D^k\phi_m(u)\to D^k\phi(u)$ as $m\to+\infty$ for each $u\in U$.
{\rm (ii)}
Let $U_0\subset U$ be a subset which is uniformly bounded away from the boundary of $U$.
Then $D^k\phi_m(u)\to D^k\phi(u)$ uniformly on $U_0$.
\end{lm}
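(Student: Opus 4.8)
The plan is to read Lemma \ref{Henry-lm} as a weak completeness statement for $C^{k,\alpha}_b$ and to prove it by showing that sup-norm convergence $|\phi_m-\phi|_\infty\to0$, together with the uniform bound $|\phi_m|_{k,\alpha}\le b$, forces the whole sequences of derivatives $D^j\phi_m$ ($1\le j\le k$) to be \emph{uniformly Cauchy} on every set kept away from $\partial U$; this avoids Arzel\`a--Ascoli (unavailable here because $E$ may be infinite dimensional) and delivers full-sequence convergence directly. The engine is an interpolation (Landau--Kolmogorov type) inequality: there is a constant $C_k$ such that for every $g\in C^k(U,E)$ with $H_\alpha(D^kg)<\infty$, every $u\in U$ and every $\rho$ with $0<\rho\le\mathrm{dist}(u,\partial U)$,
\[
|D^jg(u)|\le C_k\bigl(\rho^{-j}|g|_\infty+\rho^{\,k+\alpha-j}H_\alpha(D^kg)\bigr),\qquad 1\le j\le k.
\]
I would prove this by expanding $g$ along line segments of length $\le\rho$ inside $U$ and using higher-order finite differences $\Delta_h^kg(u)=\sum_{i=0}^k(-1)^{k-i}\binom{k}{i}g(u+ihv)$ in unit directions $v$: the $\alpha$-H\"older continuity of $D^kg$ bounds the Taylor remainders by a multiple of $\rho^{\,k+\alpha}H_\alpha(D^kg)$, while the difference itself is bounded by $2^k|g|_\infty$; estimating the top order first and then integrating down (induction on $j$) gives the displayed bound, and polarization (or, in the finite-dimensional case, Lemma \ref{derivt}) passes from these diagonal directional values to the full multilinear operator norm of $D^jg(u)$.

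Applying the inequality to $g=\phi_m-\phi_{m'}$, whose top seminorm obeys $H_\alpha(D^kg)\le H_\alpha(D^k\phi_m)+H_\alpha(D^k\phi_{m'})\le 2b$, and optimizing the free radius $\rho$ over $(0,d]$, where $d>0$ is a lower bound for the distance from the relevant set to $\partial U$, converts the additive bound into the multiplicative form
\[
|D^j(\phi_m-\phi_{m'})|_\infty\le C_k\,|\phi_m-\phi_{m'}|_\infty^{\,1-\frac{j}{k+\alpha}}(2b)^{\frac{j}{k+\alpha}}
\]
as soon as $|\phi_m-\phi_{m'}|_\infty\le (2b)\,d^{\,k+\alpha}$ (the balancing radius $\rho^\ast=(|g|_\infty/H_\alpha(D^kg))^{1/(k+\alpha)}$ is then $\le d$). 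Since $|\phi_m-\phi_{m'}|_\infty\to0$, this shows each $\{D^j\phi_m\}$, $1\le j\le k$, is uniformly Cauchy on every $U_0$ uniformly bounded away from $\partial U$; by completeness of $E$, hence of $L^j(B;E)$, it converges uniformly on such $U_0$ to a continuous limit $g_j$.

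To identify the limits and obtain smoothness of $\phi$, I would invoke the standard differentiation-of-uniform-limits theorem in Banach spaces, inductively in $j$: from $\phi_m\to\phi$ pointwise and $D\phi_m\to g_1$ uniformly near each point it follows that $\phi\in C^1$ with $D\phi=g_1$; feeding $D\phi_m\to g_1$ and $D^2\phi_m\to g_2$ into the same theorem gives $\phi\in C^2$ with $D^2\phi=g_2$, and so on up to $j=k$. Every point of $U$ lies in a ball bounded away from $\partial U$, so this applies throughout $U$ and yields $\phi\in C^k(U,E)$ with $D^j\phi=g_j$. In particular $D^k\phi_m(u)\to D^k\phi(u)$ for each $u$ (part (i)) and $D^k\phi_m\to D^k\phi$ uniformly on any $U_0$ bounded away from $\partial U$ (part (ii)). Finally, passing the uniform bounds through the limit, namely $|D^j\phi(u)|=\lim_m|D^j\phi_m(u)|\le b$ and $|D^k\phi(x)-D^k\phi(x')|=\lim_m|D^k\phi_m(x)-D^k\phi_m(x')|\le b|x-x'|^\alpha$, gives $|\phi|_{k,\alpha}\le b$, so $\phi\in C^{k,\alpha}_b$, completing part (i).

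The main obstacle is the interpolation inequality of the first paragraph. It must hold for $E$-valued maps, which is exactly why I route the entire argument through a Cauchy estimate rather than through extracting convergent subsequences, and it must carry the correct dependence on the distance to $\partial U$; this boundary room is precisely what the hypothesis of part (ii) supplies and what makes the optimization over $\rho$ legitimate. The delicate points are getting the exponents $k+\alpha-j$ right from the finite-difference remainder estimates and controlling the multilinear operator norm of $D^jg$ rather than merely its diagonal directional values. Once this inequality is established, the remaining completeness and differentiation-of-limits steps sketched above are routine.
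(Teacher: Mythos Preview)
The paper does not supply its own proof of Lemma~\ref{Henry-lm}; it simply quotes the result from \cite{Elbialy2000} (and implicitly \cite[Lemma 6.1.6]{Henry1981}). So there is no in-paper argument to compare against, and your proposal is in fact filling a gap the authors intentionally left to the literature.

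That said, your route via a Landau--Kolmogorov interpolation inequality is a correct and standard way to establish this lemma in the Banach-space-valued setting, and your observation that Arzel\`a--Ascoli is unavailable when $E$ is infinite-dimensional is exactly the reason Elbialy's version is needed over Henry's original. The structure you outline---bound $D^jg$ by $|g|_\infty$ and $H_\alpha(D^kg)$ via finite differences and Taylor remainders, apply this to $g=\phi_m-\phi_{m'}$, optimize $\rho$ to get a multiplicative estimate, then pass to the limit and identify derivatives inductively---is sound. The only point requiring genuine care is the one you flag yourself: the interpolation inequality must control the full operator norm of $D^jg(u)$, not just directional derivatives, and the induction from $j=k$ downward has to track the boundary distance correctly so that all segments $[u,u+khv]$ stay in $U$. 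With those details handled, the argument closes as you describe.
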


Assume that $B_1=\cdots =B_n=E=\mathbb{R}$. To state a multivariate Fa\`a di Bruno Formula (see \cite{Constaintine-Savits}),
we need introduce some notations.
Let $\boldsymbol{\nu}!=\prod_{i=1}^{n}(\nu_i!)$ and $x^{\boldsymbol{\nu}}=\prod_{i=1}^{n}x_i^{\nu_i}$
for $\boldsymbol{\nu}=(\nu_1,...,\nu_n)\in\mathbb{N}^n_0$ and $x=(x_1,...,x_n)\in \mathbb{R}^n$.
Set $\boldsymbol{\nu}=(\nu_1,...,\nu_n)$ and $\boldsymbol{\mu}=(\mu_1,...,\mu_n)$ belong to $\mathbb{N}^n_0$.
We write $\boldsymbol{\mu} \prec \boldsymbol{\nu}$ if one of the following holds:
{(i)} $|\boldsymbol{\mu}|<|\boldsymbol{\nu}|$;
{(ii)} $|\boldsymbol{\mu}|=|\boldsymbol{\nu}|$ and $\mu_1<\nu_1$;
{(iii)} $|\boldsymbol{\mu}|=|\boldsymbol{\nu}|$, $\mu_1=\nu_1$,..., $\mu_j=\nu_j$ and $\mu_{j+1}<\nu_{j+1}$ for some $1\leq j<n$.

Let
$h(x_1,...,x_n)=g(g_1(x_1,...,x_n),...,g_m(x_1,...,x_n))$ for $x=(x_1,...,x_n)\in \mathbb{R}^n$,
where $g_i:\mathbb{R}^n\to \mathbb{R}$, $i=1,..,m$, are $C^k$ maps defined in a neighborhood of $x^0=(x^0_1,...,x^0_n)$
and $g:\mathbb{R}^m\to \mathbb{R}$ is a $C^k$ map defined in a neighborhood of $y^0:=(g_1(x^0),...,g_m(x^0))\in\mathbb{R}^m$.
We define
$h_{\boldsymbol{\nu}}:=D^{\boldsymbol{\nu}}_{x}h(x^0)$,
$g_{\boldsymbol{\omega}}:=D^{\boldsymbol{\omega}}_{y}g(y^0)$,
$g^{(i)}_{\boldsymbol{\mu}}:=D^{\boldsymbol{\mu}}_{x}g_{i}(x^0)$
and ${\bf g}_{\boldsymbol{\mu}}:=(g^{(1)}_{\boldsymbol{\mu}}, ...,g^{(m)}_{\boldsymbol{\mu}})$. Set $0^0=1$.
The following lemma shows the explicit expression of an arbitrary partial derivative of composite function $h$, which can be found
in \cite[Theorem 2.1]{Constaintine-Savits}.
\begin{lm}\label{partial-devt}
Let $h$ be given as above. Then we have
\begin{eqnarray*}
h_{\boldsymbol{\nu}} =
\!\!\sum_{1\leq|\boldsymbol{\omega}|\leq |\boldsymbol{\nu}|}g_{\boldsymbol{\omega}}\sum_{s=1}^{|\boldsymbol{\nu}|}\sum_{p_s(\boldsymbol{\nu},\boldsymbol{\omega})}(\boldsymbol{\nu}!)
\prod_{j=1}^{s}\frac{({\bf g}_{\boldsymbol{l_j}})^{\boldsymbol{k_j}}}{(\boldsymbol{k_j}!)(\boldsymbol{l_j}!)^{|\boldsymbol{k_j}|}},
\end{eqnarray*}
where
\begin{eqnarray*}
p_s(\boldsymbol{\nu},\boldsymbol{\omega})\!\!\!&=&\!\!\!
\{(\boldsymbol{k_1},...,\boldsymbol{k_s};\boldsymbol{l_1},...,\boldsymbol{l_s}):
|\boldsymbol{k_j}|>0, 0\prec \boldsymbol{l_1}\prec...\prec \boldsymbol{l_s},\\
\!\!\!& &\!\!\!
 \sum_{j=1}^{s} \boldsymbol{k_j}=\boldsymbol{\omega} \mbox{ and }
\sum_{j=1}^{s} |\boldsymbol{k_j}|\boldsymbol{l_j}=\boldsymbol{\nu}
\}.
\end{eqnarray*}
\end{lm}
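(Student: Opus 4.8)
The plan is to establish this multivariate Faà di Bruno formula by the formal power-series (generating-function) method, which is the cleanest route to the combinatorial index set $p_s(\boldsymbol{\nu},\boldsymbol{\omega})$. I would introduce an auxiliary vector variable $t=(t_1,\dots,t_n)$ and compare the Taylor expansions about $t=0$ of the two sides of $h(x^0+t)=g\bigl(g_1(x^0+t),\dots,g_m(x^0+t)\bigr)$, working purely formally, since only the finitely many coefficients up to order $|\boldsymbol{\nu}|$ are needed and convergence is irrelevant. On the left, $h(x^0+t)=\sum_{\boldsymbol{\nu}}(h_{\boldsymbol{\nu}}/\boldsymbol{\nu}!)\,t^{\boldsymbol{\nu}}$. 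On the right, I first expand the outer map $g$ about $y^0$, and then substitute the Taylor expansion of each inner map, writing $y_i-y^0_i=\sum_{0\prec\boldsymbol{\mu}}(g^{(i)}_{\boldsymbol{\mu}}/\boldsymbol{\mu}!)\,t^{\boldsymbol{\mu}}$, so that
\[
h(x^0+t)=\sum_{\boldsymbol{\omega}}\frac{g_{\boldsymbol{\omega}}}{\boldsymbol{\omega}!}\prod_{i=1}^m\Bigl(\sum_{0\prec\boldsymbol{\mu}}\frac{g^{(i)}_{\boldsymbol{\mu}}}{\boldsymbol{\mu}!}\,t^{\boldsymbol{\mu}}\Bigr)^{\omega_i}.
\]
It then remains to extract the coefficient of $t^{\boldsymbol{\nu}}$ and match it with $h_{\boldsymbol{\nu}}/\boldsymbol{\nu}!$.

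The central step, and the main obstacle, is the bookkeeping in expanding the product $\prod_{i=1}^m(y_i-y^0_i)^{\omega_i}$. This is a product of $|\boldsymbol{\omega}|$ power series; each monomial in its expansion selects one index $0\prec\boldsymbol{\mu}$ from each of the $|\boldsymbol{\omega}|$ factors and contributes $t$ raised to the sum of the selected indices. To collect the coefficient of $t^{\boldsymbol{\nu}}$ without overcounting, I would group the selections by their distinct values: suppose the distinct indices occurring are $0\prec\boldsymbol{l_1}\prec\dots\prec\boldsymbol{l_s}$, where the strict ordering $\prec$ fixes a canonical labelling, and let $k^{(i)}_j$ be the number of factors coming from the block $(y_i-y^0_i)^{\omega_i}$ that select $\boldsymbol{l_j}$, assembled into $\boldsymbol{k_j}=(k^{(1)}_j,\dots,k^{(m)}_j)$. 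The resulting constraints are exactly those defining $p_s(\boldsymbol{\nu},\boldsymbol{\omega})$: each block $i$ makes $\omega_i$ selections, giving $\sum_j\boldsymbol{k_j}=\boldsymbol{\omega}$; the total degree is $\sum_j|\boldsymbol{k_j}|\boldsymbol{l_j}=\boldsymbol{\nu}$; and $|\boldsymbol{k_j}|>0$ since each $\boldsymbol{l_j}$ genuinely occurs. The number of ways to realize a given configuration within block $i$ is the multinomial coefficient $\omega_i!/\prod_j k^{(i)}_j!$, and each selection of $\boldsymbol{l_j}$ in block $i$ carries the factor $g^{(i)}_{\boldsymbol{l_j}}/\boldsymbol{l_j}!$.

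Finally I would assemble the pieces. Multiplying the multinomial coefficients by the selected Taylor coefficients and regrouping the products over $i$ into the vector-power notation yields $\prod_i\omega_i!=\boldsymbol{\omega}!$, $\;\prod_i\prod_j k^{(i)}_j!=\prod_j\boldsymbol{k_j}!$, and $\prod_i\prod_j\bigl(g^{(i)}_{\boldsymbol{l_j}}/\boldsymbol{l_j}!\bigr)^{k^{(i)}_j}=\prod_j(\mathbf{g}_{\boldsymbol{l_j}})^{\boldsymbol{k_j}}/(\boldsymbol{l_j}!)^{|\boldsymbol{k_j}|}$, so that the coefficient of $t^{\boldsymbol{\nu}}$ in $\prod_i(y_i-y^0_i)^{\omega_i}$ equals
\[
\boldsymbol{\omega}!\sum_{s=1}^{|\boldsymbol{\nu}|}\sum_{p_s(\boldsymbol{\nu},\boldsymbol{\omega})}\prod_{j=1}^{s}\frac{(\mathbf{g}_{\boldsymbol{l_j}})^{\boldsymbol{k_j}}}{(\boldsymbol{k_j}!)(\boldsymbol{l_j}!)^{|\boldsymbol{k_j}|}}.
\]
The factor $\boldsymbol{\omega}!$ cancels the $1/\boldsymbol{\omega}!$ from the outer expansion, and equating the total coefficient of $t^{\boldsymbol{\nu}}$ with $h_{\boldsymbol{\nu}}/\boldsymbol{\nu}!$ and clearing $\boldsymbol{\nu}!$ produces the asserted identity. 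The range $1\le|\boldsymbol{\omega}|\le|\boldsymbol{\nu}|$ is then automatic, since $|\boldsymbol{\omega}|=\sum_j|\boldsymbol{k_j}|\ge1$ and, because each $|\boldsymbol{l_j}|\ge1$, also $|\boldsymbol{\omega}|=\sum_j|\boldsymbol{k_j}|\le\sum_j|\boldsymbol{k_j}|\,|\boldsymbol{l_j}|=|\boldsymbol{\nu}|$; the same degree count bounds $s$ by $|\boldsymbol{\nu}|$. An alternative would be a direct induction on $|\boldsymbol{\nu}|$ via the chain rule, but controlling the combinatorial coefficients inductively is considerably more delicate, so I expect the generating-function argument to be both shorter and more transparent.
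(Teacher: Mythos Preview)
The paper does not actually prove this lemma: it is quoted verbatim from Constantine--Savits (the reference \cite{Constaintine-Savits}), with the sentence ``which can be found in \cite[Theorem 2.1]{Constaintine-Savits}'' in lieu of a proof. Your generating-function argument is correct and is essentially the method of that cited paper, so there is nothing to compare against here and your sketch stands on its own.
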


\section{Proof of Theorem \ref{thm-1}}
\label{sec-pf-thm-1}

For any fixed constant $d>0$, set  $V^{0}_d:=\{\xi\in \mathbb{R}^n: |\xi|<d\}$.
Let $\mathcal{B}_{d,\lambda}$ denote
the set of continuous maps from $\mathbb{R}\times V^{0}_d$ to $\mathbb{R}^n$ and
each $x\in \mathcal{B}_{d,\lambda}$ is $C^k$ in the second variable
and satisfies that for some constants $\beta_{j}>0$,
\begin{eqnarray}
|x|_{\mathcal{B}_{d,\lambda}}\!\!\!&:=&\!\!\!
\sup_{(t,\xi)\in \mathbb{R}\times V^{0}_d} |x(t,\xi)|e^{-\lambda|t|} \leq \beta_0,\label{norm-x}\\
|x|_{\mathcal{B}_{d,\lambda},j}\!\!\!&:=&\!\!\!
\sup_{(t,\xi)\in \mathbb{R}\times V^{0}_d} |D_2^jx(t,\xi)|e^{-j\lambda|t|} \leq \beta_j, \ \  j=1,2,...,k,\label{bd-xj}\\
|x|_{\mathcal{B}_{d,\lambda},k+1}\!\!\!\!\!&:=&\!\!\!\!
\sup_{t\in \mathbb{R},\, \xi_1,\,\xi_2\in V^{0}_d,\,\xi_1\neq\xi_2}
\!\!\frac{|D_2^k x(t,\xi_1)-D_2^k x(t,\xi_2)|}{|\xi_1-\xi_2|}e^{-(k+1)\lambda|t|} \leq \beta_{k+1},\label{bd-lip-x}
\end{eqnarray}
where the constant $\lambda$ is given as in {\bf (H1)}.
\begin{lm}\label{lm-4-complete}
Let the map $\rho$ be defined in the form $\rho(x,y)=|x-y|_{\mathcal{B}_{d,\lambda}}$ for any $x, y\in \mathcal{B}_{d,\lambda}$.
Then the set $\mathcal{B}_{d,\lambda}$ endowed with the metric $\rho$ is a complete metric space.
\end{lm}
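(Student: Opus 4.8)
The plan is to build a candidate limit from the weighted supremum structure and then to use Henry's lemma (Lemma \ref{Henry-lm}) to show this limit lies in $\mathcal{B}_{d,\lambda}$. First one checks that $\rho$ is a metric: it is symmetric and subadditive because $|\cdot|_{\mathcal{B}_{d,\lambda}}$ is, it is finite since $\rho(x,y)\le|x|_{\mathcal{B}_{d,\lambda}}+|y|_{\mathcal{B}_{d,\lambda}}\le2\beta_0$, and $\rho(x,y)=0$ forces $x\equiv y$ since $e^{-\lambda|t|}>0$. Now let $\{x_m\}_{m\ge1}\subset\mathcal{B}_{d,\lambda}$ be $\rho$-Cauchy. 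For $\varepsilon>0$ and $m,l$ large, $\rho(x_m,x_l)<\varepsilon$ gives $|x_m(t,\xi)-x_l(t,\xi)|<\varepsilon e^{\lambda|t|}$ for every $(t,\xi)\in\mathbb{R}\times V^0_d$, so $\{x_m(t,\xi)\}$ converges in $\mathbb{R}^n$ to some $x(t,\xi)$; letting $l\to\infty$ shows $|x_m(t,\xi)-x(t,\xi)|\le\varepsilon e^{\lambda|t|}$ uniformly in $(t,\xi)$ once $m$ is large. Hence the convergence is uniform on every slab $[-T,T]\times V^0_d$, so $x$ is continuous on $\mathbb{R}\times V^0_d$ (continuity being a local property), and $\rho(x_m,x)\le\varepsilon$ for large $m$, i.e. $x_m\to x$ in $(\mathcal{B}_{d,\lambda},\rho)$. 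It remains to verify $x\in\mathcal{B}_{d,\lambda}$.

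For this, fix $t\in\mathbb{R}$ and set $g_m:=x_m(t,\cdot)$, $g:=x(t,\cdot)$, each $C^k$ on $V^0_d$. For every $j\in\{1,\ldots,k\}$, the bounds (\ref{bd-xj}) and (\ref{bd-lip-x})---together with the mean value inequality $H_1(D^jg_m)\le|D^{j+1}g_m|_\infty$ on the convex ball $V^0_d$ when $j<k$---show that $\{g_m\}\subset C^{j,1}_{b_j(t)}(V^0_d,\mathbb{R}^n)$ with $b_j(t):=\max\{\beta_0e^{\lambda|t|},\ldots,\beta_je^{j\lambda|t|},\beta_{j+1}e^{(j+1)\lambda|t|}\}<\infty$. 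Since $|g_m-g|_\infty\to0$, Lemma \ref{Henry-lm}\,(i), applied with $k$ replaced successively by $1,\ldots,k$, gives that $x(t,\cdot)$ is $C^k$ and that $D_2^jx_m(t,\cdot)\to D_2^jx(t,\cdot)$ pointwise on $V^0_d$ for $0\le j\le k$. Passing to the limit $m\to\infty$ in the pointwise inequalities underlying (\ref{norm-x})--(\ref{bd-lip-x})---the last applied at an arbitrary pair $\xi_1\ne\xi_2$---then yields (\ref{norm-x})--(\ref{bd-lip-x}) for $x$ with the same constants, so $x\in\mathcal{B}_{d,\lambda}$ and completeness follows.

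The main obstacle is bookkeeping rather than anything deep: Henry's lemma is stated for un-weighted balls $C^{k,1}_b$, so it applies only one $t$-slice at a time with the $t$-dependent radius $b_j(t)$, after which the slices must be reassembled and joint continuity of $x$ argued separately---which is why the local-uniform convergence on slabs is isolated first. A secondary point is that the statement of Henry's lemma returns convergence only of the top-order derivative, so convergence of the intermediate $D_2^j$, $1\le j<k$ (needed to pass to the limit in (\ref{bd-xj})), must be obtained either by re-running the lemma at each lower order, as above, or by an Arzel\`a--Ascoli argument based on the uniform $C^{k,1}$-bound.
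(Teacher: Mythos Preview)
Your proposal is correct and follows essentially the same route as the paper: build the limit via the weighted supremum norm (the paper does this by observing that $g_m(t,\xi)e^{-\lambda|t|}$ is Cauchy in $C_b(\mathbb{R}\times V^0_d,\mathbb{R}^n)$, which is equivalent to your local-uniform-on-slabs argument), then freeze $t$ and apply Henry's lemma on the $\xi$-slice with the $t$-dependent radius to upgrade the limit to $C^{k,1}$ and recover the weighted bounds. Your treatment is in fact slightly more careful than the paper's on one point: Henry's lemma as stated returns only $D^k\phi_m\to D^k\phi$, yet the paper invokes $D^j\widetilde{g}_m\to D^j\widetilde{g}_0$ for all $j\le k$ without comment; your device of re-running the lemma at each lower order $j$ closes that small gap.
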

\begin{proof}
Clearly, the map $\rho$ on $\mathcal{B}_{d,\lambda} \times \mathcal{B}_{d,\lambda}$ is well defined and induces a metric.
In the following we prove the completeness of the metric space $(\mathcal{B}_{d,\lambda}, \rho)$.
Let $\{g_m\}_{m=1}^{+\infty}$ be a Cauchy sequence of $\mathcal{B}_{d,\lambda}$,
that is, for any $\epsilon>0$, there is a positive integer $N(\epsilon)$
such that for any positive integer $m, m'\geq N(\epsilon)$,
\begin{eqnarray*}\label{Cauchy-seq-1}
\rho(g_{m'},g_m)=|g_{m'}-g_m|_{\mathcal{B}_{d,\lambda}}=
\sup_{(t,\xi)\in \mathbb{R}\times V^{0}_d} |g_{m'}(t,\xi)-g_m(t,\xi)|e^{-\lambda|t|}<\epsilon.
\end{eqnarray*}
Then $\{g_m(t,\xi)e^{-\lambda|t|}\}_{m=1}^{+\infty}$ is a Cauchy sequence in
the Banach space $C_b(\mathbb{R}\times V^{0}_d,\mathbb{R}^n):=
\{f \in C(\mathbb{R}\times V^{0}_d,\mathbb{R}^n): \sup_{(t,\xi)\in \mathbb{R}\times V^{0}_d} |f(t,\xi)|<+\infty\}$.
Let $\tilde{g}_0(t, \xi)$  be the limit of $g_{m}(t,\xi)e^{-\lambda|t|}$
in $C_b(\mathbb{R}\times V^{0}_d,\mathbb{R}^n)$. Set $g_0(t,\xi):=\tilde{g}_0(t,\xi)e^{\lambda|t|}$. This implies that
$\rho(g_{m},g_{0})\to 0$ as $m\to +\infty$.

Next we prove that $g_0\in\mathcal{B}_{d,\lambda}$. For any $g\in \mathcal{B}_{d,\lambda}$ and fixed $t\in \mathbb{R}$,
we define the map $\widetilde{g}:V^{0}_d\to \mathbb{R}^n$ in the form
$\widetilde{g}(\xi)=g(t,\xi)$, then
$|\widetilde{g}(\xi)|=|g(t,\xi)|\leq |g|_{\mathcal{B}_{d,\lambda}}\,e^{\lambda|t|}\leq \beta_0e^{\lambda|t|}.$
Recall that $\rho(g_{m},g_{0})\to 0$ as $m\to +\infty$.
Then we have $|\widetilde{g}_m-\widetilde{g}_0|_{\infty}\to0$ as $m\to+\infty$.
From (\ref{norm-x})-(\ref{bd-lip-x}) it follows that
$\{\widetilde{g}_m\}_{m=1}^{+\infty}\subset C_{\beta(t)}^{k,1}$,
where $\beta(t)=\max\{\beta_{0}e^{\lambda|t|}, \beta_je^{j\lambda|t|} \mbox{ for } j=1,2,...,k+1\}$.
Hence, by Lemma \ref{Henry-lm} we obtain
$\widetilde{g}_0\in C_{\beta(t)}^{k,1}$ and $D^j\widetilde{g}_m(\xi)\to D^j\widetilde{g}_0(\xi)$ as $m\to +\infty$
for each $\xi\in V^{0}_d$ and $j=1,2,...,k$.
Together with (\ref{norm-x})-(\ref{bd-lip-x}) again, we obtain $g_0\in\mathcal{B}_{d,\lambda}$.
Therefore, the proof is complete.
\end{proof}

On the space $\mathcal{B}_{d,\lambda}$, we define a map $\mathcal{T}$ in the following form
\begin{eqnarray}\label{map-1}
\mathcal{T}(x)(t,\xi)=\xi+L(t)x_{t}+\int_0^{t} F(s,x_s)ds.
\end{eqnarray}
We will show that the unique fixed point of $\mathcal{T}$ in $\mathcal{B}_{d,\lambda}$ is the desired map $\Psi$ in Theorem \ref{thm-1}.
To this end, we first need prove the following lemmas.

\begin{lm}\label{lm-GB4-1}
Assume that {\bf (H1)} holds. Then
for any $x\in \mathcal{B}_{d,\lambda}$ and $(t,\xi) \in \mathbb{R}\times V^{0}_d$,
the following estimates hold:
\begin{itemize}
\item[(i)]
$|\mathcal{T}(x)(t,\xi)|e^{-\lambda|t|}\leq d+M_0/\lambda+(Me^{r\lambda}+M_1e^{r\lambda}/\lambda)\beta_0.$
\item[(ii)]
$|D_2\mathcal{T}(x)(t,\xi)|e^{-\lambda|t|}\leq 1+(Me^{r\lambda}+M_1e^{r\lambda}/\lambda)\beta_1.$
\item[(iii)]
For any positive integer $m$ with $2\leq m\leq k$, we have
$$|D_2^{m}\mathcal{T}(x)(t,\xi)|e^{-m\lambda|t|}\leq M\beta_{m}e^{mr\lambda}+A_m/\lambda,$$
where
\begin{eqnarray*}
A_m\!\!\!&=&\!\!\!(m-1)!e^{mr\lambda}\sum_{j=1}^{m} M_j\sum_{p(m,j)}\prod_{i=1}^{m}\frac{\beta_i^{\omega_i}}{(\omega_i!)(i!)^{\omega_i}},\\
p(m,j)\!\!\!&=&\!\!\!\left\{(\omega_1,...,\omega_m): \omega_i\in\mathbb{N}_0, \sum_{i=1}^{m}\omega_i=j, \sum_{i=1}^{m}i\omega_i=m\right\}.
\end{eqnarray*}
\end{itemize}
\end{lm}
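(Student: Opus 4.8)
The plan is to read off all three estimates directly from the integral representation $\mathcal{T}(x)(t,\xi)=\xi+L(t)x_t+\int_0^tF(s,x_s)\,ds$ of $(\ref{map-1})$, isolating the three summands and combining \textbf{(H1)} with the defining inequalities $(\ref{norm-x})$--$(\ref{bd-lip-x})$ of $\mathcal{B}_{d,\lambda}$. Two elementary facts will be used throughout. First, for $x\in\mathcal{B}_{d,\lambda}$, $s\in\mathbb{R}$ and $\theta\in[-r,0]$ one has $|s+\theta|\le|s|+r$, hence $|x(s+\theta,\xi)|\le\beta_0e^{\lambda|s+\theta|}\le\beta_0e^{r\lambda}e^{\lambda|s|}$ and $|D_2^ix(s+\theta,\xi)|\le\beta_ie^{ir\lambda}e^{i\lambda|s|}$ for $i=1,\dots,k$; taking suprema over $\theta$ gives $|x_s|\le\beta_0e^{r\lambda}e^{\lambda|s|}$ and $|(D_2^ix)_s|\le\beta_ie^{ir\lambda}e^{i\lambda|s|}$. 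Second, for every integer $m\ge1$ and every $t\in\mathbb{R}$ (of either sign), $e^{-m\lambda|t|}\left|\int_0^te^{m\lambda|s|}\,ds\right|\le 1/(m\lambda)$. Finally, since $|D_2F(t,\cdot)|\le M_1$ on all of $\mathcal{C}$, the map $F(t,\cdot)$ is globally $M_1$-Lipschitz, so $|F(s,x_s)|\le|F(s,0)|+M_1|x_s|\le(M_0+M_1\beta_0e^{r\lambda})e^{\lambda|s|}$.

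Part (i) is then immediate: the triangle inequality gives $|\mathcal{T}(x)(t,\xi)|\le|\xi|+|L(t)|\,|x_t|+\left|\int_0^t|F(s,x_s)|\,ds\right|$, and bounding $|\xi|<d\le de^{\lambda|t|}$, $|L(t)|\,|x_t|\le M\beta_0e^{r\lambda}e^{\lambda|t|}$ (using $|L(t)|\le M$), and the integral by $(M_0+M_1\beta_0e^{r\lambda})e^{\lambda|t|}/\lambda$ via the second sub-estimate with $m=1$, then multiplying through by $e^{-\lambda|t|}$, yields exactly the claimed bound. For part (ii), note first that because $L(t)$ is a bounded linear operator and $F$ is $C^1$ in the second variable with uniformly bounded derivative, one may differentiate $(\ref{map-1})$ under the integral sign; the chain rule gives $D_2\mathcal{T}(x)(t,\xi)=\mathrm{Id}+L(t)(D_2x)_t+\int_0^tD_2F(s,x_s)\circ(D_2x)_s\,ds$. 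Using $|\mathrm{Id}|=1$, $|L(t)(D_2x)_t|\le M\beta_1e^{r\lambda}e^{\lambda|t|}$, $|D_2F(s,x_s)\circ(D_2x)_s|\le M_1\beta_1e^{r\lambda}e^{\lambda|s|}$, the two sub-estimates above, and $e^{-\lambda|t|}\le1$, we obtain (ii).

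For part (iii), fix $2\le m\le k$. The term $\xi$ contributes nothing and $D_2^m\bigl(L(t)x_t\bigr)=L(t)(D_2^mx)_t$ by linearity and boundedness of $L(t)$, so this piece contributes at most $M\beta_me^{mr\lambda}$ after multiplying by $e^{-m\lambda|t|}$. For the integral term, write $h(s,\xi):=F\bigl(s,x_s(\cdot,\xi)\bigr)$, a composition whose inner map $\xi\mapsto x_s(\cdot,\xi)\in\mathcal{C}$ has $i$-th derivative $(D_2^ix)_s$. Applying the Fa\`a di Bruno formula (Lemma \ref{partial-devt}) together with Lemma \ref{derivt} and the multilinearity of the derivatives, in the operator-norm form that is all that matters here, gives $|D_\xi^mh(s,\xi)|\le\sum_{j=1}^m|D_2^jF(s,x_s)|\sum_{p(m,j)}\frac{m!}{\prod_{i=1}^m\omega_i!(i!)^{\omega_i}}\prod_{i=1}^m|(D_2^ix)_s|^{\omega_i}$. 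Substituting $|D_2^jF|\le M_j$ and $|(D_2^ix)_s|\le\beta_ie^{ir\lambda}e^{i\lambda|s|}$, and using $\sum_ii\omega_i=m$ on $p(m,j)$ so that $\prod_ie^{i\omega_i(r\lambda+\lambda|s|)}=e^{m(r\lambda+\lambda|s|)}$, the integrand is bounded by $e^{mr\lambda}e^{m\lambda|s|}\sum_{j=1}^mM_j\sum_{p(m,j)}\frac{m!}{\prod_i\omega_i!(i!)^{\omega_i}}\prod_i\beta_i^{\omega_i}$; integrating in $s$ and invoking the second sub-estimate converts the factor $m!/(m\lambda)$ into $(m-1)!/\lambda$, which is precisely $A_m/\lambda$. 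Adding the $L(t)$-contribution completes (iii).

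The routine content is the repeated use of the triangle inequality and the exponential bookkeeping; the point demanding care is (iii): one must justify differentiating $(\ref{map-1})$ $m$ times under the integral sign (via the uniform bounds in \textbf{(H1)}) and, above all, track the Fa\`a di Bruno coefficients so that the combinatorial factor $m!/\prod_i\omega_i!(i!)^{\omega_i}$ produced by the chain rule, once divided by the extra $m$ picked up from $\int_0^te^{m\lambda|s|}\,ds$, collapses \emph{exactly} to the coefficient $(m-1)!/\prod_i\omega_i!(i!)^{\omega_i}$ appearing in $A_m$ — there is no slack to spare, so the constants must be matched rather than merely bounded.
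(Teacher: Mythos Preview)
Your proof is correct and follows essentially the same route as the paper: split $\mathcal{T}(x)$ into its three summands, bound $|x_s|$ and $|(D_2^ix)_s|$ via $|s+\theta|\le|s|+r$, use the Lipschitz bound $|F(s,x_s)|\le|F(s,0)|+M_1|x_s|$ for (i), differentiate under the integral sign for (ii), and for (iii) apply Fa\`a di Bruno to the composition $\xi\mapsto F(s,x_s(\cdot,\xi))$ and integrate. The only cosmetic difference is that the paper cites the \emph{univariate} Fa\`a di Bruno formula directly (equation (1.1) in \cite{Constaintine-Savits}) rather than going through the multivariate Lemma~\ref{partial-devt} combined with Lemma~\ref{derivt}; your careful remark about matching the $(m-1)!$ exactly (via $m!/(m\lambda)$ from the integral) is the same bookkeeping the paper carries out.
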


\begin{proof}
For any $x\in \mathcal{B}_{d,\lambda}$ and $(t,\xi) \in \mathbb{R}\times V^{0}_d$, we have
\begin{eqnarray*}
|\mathcal{T}(x)(t,\xi)|\!\!\!&\leq&\!\!\! |\xi|+|L(t)x_{t}|+|\int_0^{t} |F(s,x_s)-F(s,0)|ds|+|\int_0^{t} |F(s,0)|ds|\\
                       \!\!\!&\leq&\!\!\! d+M\beta_0e^{\lambda(r+|t|)}+M_1|\int_0^{t} \beta_0e^{\lambda(r+|s|)}ds|+M_0e^{\lambda|t|}/\lambda\\
             \!\!\!&\leq&\!\!\! \left(d+M_0/\lambda+(Me^{r\lambda}+M_1e^{r\lambda}/\lambda)\beta_0\right)e^{\lambda|t|}.
\end{eqnarray*}
Thus, result (i) is proved.

By Leibniz's Rule (\cite[Theorem 8.11.2, p.177]{Dieudonne}) and the linearity of $L(t)$,
we see that
\begin{eqnarray*}
D_2\mathcal{T}(x)(t,\xi)=I+L(t)(D_2x_{t})+\int_0^{t} D_2F(s,x_s)D_2 x_s ds.
\end{eqnarray*}
Then we have
\begin{eqnarray*}
|D_2\mathcal{T}(x)(t,\xi)|\!\!\!&\leq&\!\!\! 1 +M|D_2x_{t}|+|\int_0^{t} |D_2F(s,x_s)||D_2 x_s| ds|\\
                          \!\!\!&\leq&\!\!\! 1 +M\beta_{1}e^{\lambda(r+|t|)}+|\int_0^{t} M_1\beta_1e^{\lambda(r+|s|)}ds|
                          \leq \left(1+(Me^{r\lambda}+M_1e^{r\lambda}/\lambda)\beta_1\right)e^{\lambda|t|}.
\end{eqnarray*}
Thus, result (ii) is proved.

For any positive integer $m$ with $2\leq m\leq k$,
by the Leibniz's Rule and the univariate Fa\`a di Bruno Formula
(see (1.1) in \cite[p.503]{Constaintine-Savits}), we have
\begin{eqnarray*}
|D^m_2\mathcal{T}(x)(t,\xi)|\!\!\!&\leq&\!\!\!|L(t)||D_2^{m}x_{t}|+m!|\int_0^{t} \sum_{j=1}^{m} |D^j_2F(s,x_s)|\sum_{p(m,j)}\prod_{i=1}^{m}
\frac{|D^{i}_2 x_s|^{\omega_i}}{(\omega_i!)(i!)^{\omega_i}} ds|\\
              \!\!\!&\leq&\!\!\!
              M\beta_{m}e^{m\lambda(r+|t|)}+m!|\int_0^{t}\sum_{j=1}^{m} M_j\sum_{p(m,j)}\prod_{i=1}^{m}\frac{(\beta_ie^{i\lambda(r+|s|)})^{\omega_i}}{(\omega_i!)(i!)^{\omega_i}}ds|\\
              \!\!\!&\leq&\!\!\!
              M\beta_{m}e^{m\lambda(r+|t|)}+m!e^{mr\lambda}\sum_{j=1}^{m} M_j\sum_{p(m,j)}\prod_{i=1}^{m}\frac{\beta_i^{\omega_i}}{(\omega_i!)(i!)^{\omega_i}}|\int_0^{t}  e^{m\lambda|s|} ds|\\
              \!\!\!&\leq&\!\!\!
              \left(M\beta_{m}e^{mr\lambda}+A_{m}/\lambda\right)e^{m\lambda|t|}.
\end{eqnarray*}
Thus, result (iii) is proved.
Then the proof is complete.
\end{proof}

\begin{lm}\label{lm-GB4-2}
For any $t\in \mathbb{R}$, $\xi_1,\,\xi_2\in \mathbb{R}^{n}$ and $x\in \mathcal{B}_{d,\lambda}$, we have
$$
|D_2^k\mathcal{T}(x)(t,\xi_1)-D_2^k\mathcal{T}(x)(t,\xi_2)|
\leq \left(M\beta_{k+1}e^{(k+1)r\lambda}+A_{k+1}/\lambda\right) e^{(k+1)\lambda|t|}|\xi_1-\xi_2|,
$$
where  $A_{k+1}:=(A^1_{k+1}+A^2_{k+1})/(k+1)$,
\begin{eqnarray*}
A^1_{k+1}\!\!\!&:=&\!\!\!
\beta_1e^{(k+1)r\lambda}(k!)\sum_{m=1}^{k} M_{m+1}\sum_{p(k,m)}\prod_{i=1}^{k}\frac{\beta_i^{\omega_i}}{(\omega_i!)(i!)^{\omega_i}},\\
A^2_{k+1}\!\!\!&:=&\!\!\!
e^{(k+1)r\lambda}(k!)\sum_{m=1}^{k} M_{m}\sum_{p(k,m)}\sum_{j=1}^{k}
\frac{\omega_{j}\beta_{j+1}\beta_j^{\omega_j-1}}{(\omega_j!)(j!)^{\omega_j}}
\left(\prod_{i=0}^{j-1}\frac{\beta_i^{\omega_i}}{(\omega_i!)(i!)^{\omega_i}}\right)
\left(\prod_{i=j+1}^{k+1}\frac{\beta_i^{\omega_i}}{(\omega_i!)(i!)^{\omega_i}}\right),
\end{eqnarray*}
and $\omega_{0}=\omega_{k+1}:=0$.
\end{lm}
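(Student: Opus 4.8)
The plan is to differentiate the fixed-point map~\eqref{map-1} $k$ times in the second variable and to estimate the resulting difference termwise, in exactly the same spirit as Lemma~\ref{lm-GB4-1}. Since the $k$-th $\xi$-derivative of the term $\xi$ in~\eqref{map-1} is independent of $\xi$ (it equals the identity when $k=1$ and vanishes when $k\geq2$), it drops out of the difference, so by the Leibniz rule and the linearity of $L(t)$ we have
\begin{align*}
D_2^k\mathcal{T}(x)(t,\xi_1)-D_2^k\mathcal{T}(x)(t,\xi_2)
&=L(t)\bigl(D_2^kx_t(\xi_1)-D_2^kx_t(\xi_2)\bigr)\\
&\quad+\int_0^t\Bigl(D_2^k[F(s,x_s)]|_{\xi_1}-D_2^k[F(s,x_s)]|_{\xi_2}\Bigr)\,ds.
\end{align*}
For the first summand, combining $\sup_t|L(t)|=M$ with~\eqref{bd-lip-x} and $|s+\theta|\leq|s|+r$ for $\theta\in[-r,0]$ gives the bound $M\beta_{k+1}e^{(k+1)\lambda r}e^{(k+1)\lambda|t|}|\xi_1-\xi_2|$, which is precisely the first term of the asserted estimate. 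Hence it suffices to bound the integrand by $(A^1_{k+1}+A^2_{k+1})\,e^{(k+1)\lambda|s|}|\xi_1-\xi_2|$, since then $\bigl|\int_0^te^{(k+1)\lambda|s|}ds\bigr|\leq e^{(k+1)\lambda|t|}/((k+1)\lambda)$ produces the remaining term $A_{k+1}e^{(k+1)\lambda|t|}|\xi_1-\xi_2|/\lambda$.

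To bound the integrand I would first record the Lipschitz-type estimates that follow, via the mean value theorem along the segment $[\xi_1,\xi_2]\subset V^0_d$, from~\eqref{bd-xj}--\eqref{bd-lip-x}: for $1\leq j\leq k$,
\begin{align*}
|D_2^jx_s(\xi_1)-D_2^jx_s(\xi_2)|&\leq\beta_{j+1}e^{(j+1)\lambda(|s|+r)}|\xi_1-\xi_2|,\\
|x_s(\xi_1)-x_s(\xi_2)|&\leq\beta_1e^{\lambda(|s|+r)}|\xi_1-\xi_2|,
\end{align*}
where for $j<k$ one uses the bound on $D_2^{j+1}x$ from~\eqref{bd-xj} and for $j=k$ one uses~\eqref{bd-lip-x} directly; likewise $D_2^mF$ is $M_{m+1}$-Lipschitz for each $1\leq m\leq k$ (for $m<k$ because $|D_2^{m+1}F|\leq M_{m+1}$, and for $m=k$ by {\bf (H1)}).

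Next I expand $D_2^k[F(s,x_s)]$ by the univariate Fa\`a di Bruno Formula exactly as in the proof of Lemma~\ref{lm-GB4-1}, obtaining $k!\sum_{1\leq m\leq k}D_2^mF(s,x_s)\sum_{p(k,m)}\prod_{i=1}^k(D_2^ix_s)^{\omega_i}/((\omega_i!)(i!)^{\omega_i})$, and split each $\xi_1$--$\xi_2$ difference by the identity $ab-a'b'=(a-a')b+a'(b-b')$. The piece $(a-a')b$ uses the Lipschitz bound for $D_2^mF$, the estimate $|x_s(\xi_1)-x_s(\xi_2)|\leq\beta_1e^{\lambda(|s|+r)}|\xi_1-\xi_2|$, and $|D_2^ix_s|\leq\beta_ie^{i\lambda(|s|+r)}$; since $\sum_i i\omega_i=k$ on $p(k,m)$, every exponential factor collapses to $e^{(k+1)\lambda(|s|+r)}$, and this piece yields exactly the $A^1_{k+1}$ part. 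The piece $a'(b-b')$ requires telescoping $\prod_i(D_2^ix_s(\xi_1))^{\omega_i}-\prod_i(D_2^ix_s(\xi_2))^{\omega_i}$ first into a sum over $j$ of a factor $(D_2^jx_s(\xi_1))^{\omega_j}-(D_2^jx_s(\xi_2))^{\omega_j}$ times the remaining factors, and then telescoping each power difference into $\omega_j$ summands each carrying one factor $D_2^jx_s(\xi_1)-D_2^jx_s(\xi_2)$; bounding that factor by the estimate above, $|D_2^mF|\leq M_m$, and $|D_2^ix_s|\leq\beta_ie^{i\lambda(|s|+r)}$ for the remaining factors, and observing $j(\omega_j-1)+(j+1)+\sum_{i\neq j}i\omega_i=k+1$, every term again collapses to $e^{(k+1)\lambda(|s|+r)}$ and one reads off exactly the $A^2_{k+1}$ part.

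The only real difficulty is the bookkeeping: one must organize the double telescoping of $\prod_i(D_2^ix_s(\xi))^{\omega_i}$ so that the weights $\omega_j\beta_{j+1}\beta_j^{\omega_j-1}/((\omega_j!)(j!)^{\omega_j})$ multiplied by $\prod_{i\neq j}\beta_i^{\omega_i}/((\omega_i!)(i!)^{\omega_i})$ occur with exactly the multiplicities recorded in $A^2_{k+1}$, and keep careful track of the two boundary cases $j=k$ and $m=k$, where only a Lipschitz (rather than a derivative) bound is available. Apart from that, the argument is a direct repetition of the estimates already carried out in the proof of Lemma~\ref{lm-GB4-1}.
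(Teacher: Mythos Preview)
Your proposal is correct and follows essentially the same approach as the paper: the paper likewise splits $D_2^k\mathcal{T}(x)(t,\xi_1)-D_2^k\mathcal{T}(x)(t,\xi_2)$ into the $L(t)$-piece and the integral of the Fa\`a di Bruno expansion of $D_2^k[F(s,x_s)]$, then decomposes the latter via exactly the identity $ab-a'b'=(a-a')b+a'(b-b')$ and the same double telescoping of the product $\prod_i(D_2^ix_s)^{\omega_i}$ (this bookkeeping is recorded in Appendix~B), obtaining the two pieces $I_1\leq A_{k+1}^1e^{(k+1)\lambda|t|}|\xi_1-\xi_2|$ and $I_2\leq A_{k+1}^2e^{(k+1)\lambda|t|}|\xi_1-\xi_2|$ before integrating.
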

\begin{proof}
For any $\xi_1,\,\xi_2\in \mathbb{R}^{n}$ and $x\in \mathcal{B}_{d,\lambda}$,
let $y_t:=x_t(\cdot\,,\xi_1)$, $z_t:=x_t(\cdot\,,\xi_2)$,
$G(t,\xi_1):=F(t,y_t)$ and $G(t,\xi_2):=F(t,z_t)$.
 By Appendix B, we have
\begin{eqnarray}\label{est-G-dev}
|D^k_2G(t,\xi_1)-D^k_2G(t,\xi_2)|\leq I_1+I_2,
\end{eqnarray}
where
\begin{equation}
\begin{split}
I_1:=&\,k!\sum_{m=1}^{k}|D^m_2F(t,y_t)-D^m_2F(t,z_t)|
     \sum_{p(k,m)}\prod_{i=1}^{k}\frac{|D^{i}_2y_t|^{\omega_i}}{(\omega_i!)(i!)^{\omega_i}},\\
I_2:=&\,k!\sum_{m=1}^{k} |D^m_2F(t,z_t)|\sum_{p(k,m)}\sum_{j=1}^{k} \frac{Q_{j}}{(\omega_j!)(j!)^{\omega_j}}
\left(\prod_{i=0}^{j-1} \frac{|D^{i}_2z_t|^{\omega_i}}{(\omega_i!)(i!)^{\omega_i}}\right)
\left(\prod_{i=j+1}^{k+1} \frac{|D^{i}_2y_t|^{\omega_i}}{(\omega_i!)(i!)^{\omega_i}}\right),\label{I2}\\
Q_{j}:=&\, |D^{j}_2y_t-D^{j}_2z_t|\sum_{l=0}^{\omega_{j}-1}|D^{j}_2z_t|^{l}|D^{j}_2y_t|^{\omega_{j}-l-1}.
\end{split}
\end{equation}
For $I_1$, we observe that
\begin{equation}
\begin{split}
I_1
\leq&\,k!\sum_{m=1}^{k}M_{m+1}|y_t-z_t|
\sum_{p(k,m)}\prod_{i=1}^{k}\frac{\beta_i^{\omega_i}e^{i\omega_i\lambda(r+|t|)}}{(\omega_i!)(i!)^{\omega_i}}\\
\leq&\,
k!\sum_{m=1}^{k}M_{m+1}\beta_1e^{\lambda(r+|t|)}|\xi_1-\xi_2|
\sum_{p(k,m)}\prod_{i=1}^{k}\frac{\beta_i^{\omega_i}e^{i\omega_i\lambda(r+|t|)}}{(\omega_i!)(i!)^{\omega_i}}=    A_{k+1}^1 e^{(k+1)\lambda|t|}|\xi_1-\xi_2|\label{I1-est}.
\end{split}
\end{equation}
To estimate $I_2$, by (\ref{bd-xj}) and (\ref{bd-lip-x}) we note that
\begin{eqnarray*}
Q_{j}
\!\!\!&\leq&\!\!\! \beta_{j+1}e^{(j+1)\lambda(r+|t|)}|\xi_1-\xi_2|
\sum_{l=0}^{\omega_{j}-1}(\beta_je^{j\lambda(r+|t|)})^{l}(\beta_je^{j\lambda(r+|t|)})^{\omega_{j}-l-1}\nonumber\\
\!\!\!&\leq&\!\!\! \omega_{j}\beta_{j+1}\beta_j^{\omega_{j}-1} e^{(1+j\omega_{j})\lambda(r+|t|)}|\xi_1-\xi_2|,
\end{eqnarray*}
together with  (\ref{bd-xj}), (\ref{I2}) and {\bf (H1)}, we have
\begin{equation}
\begin{split}
I_2
\leq&\,
(k!)\sum_{m=1}^{k}\! M_{m}\!\!\sum_{p(k,m)}\sum_{j=1}^{k}
\frac{\omega_{j}\beta_{j+1}\beta_j^{\omega_j-1}e^{(1+j\omega_{j})\lambda(r+|t|)}|\xi_1-\xi_2|}{(\omega_j!)(j!)^{\omega_j}}
\\
&\, \times
\left(\prod_{i=0}^{j-1}\!\frac{(\beta_ie^{i\lambda(r+|t|)})^{\omega_i}}{(\omega_i!)(i!)^{\omega_i}}\right)
\left(\prod_{i=j+1}^{k+1}\!\!\frac{(\beta_ie^{i\lambda(r+|t|)})^{\omega_i}}{(\omega_i!)(i!)^{\omega_i}}\right)
\leq A_{k+1}^2e^{(k+1)\lambda|t|}|\xi_1-\xi_2|.\label{I2-est}
\end{split}
\end{equation}
In the end, applying the Leibniz's reule, in view of (\ref{bd-lip-x}), (\ref{I1-est}) and (\ref{I2-est}) we have
\begin{eqnarray*}
\lefteqn{|D_2^k\mathcal{T}(x)(t,\xi_1)-D_2^k\mathcal{T}(x)(t,\xi_2)|}\\
\!\!\!&\leq&\!\!\!
|L(t)||D_2^{k}y_{t}-D_2^{k}z_{t}|+
|\int_0^{t} \left(D^k_2G(s,\xi_1)-D^k_2G(s,\xi_2)\right) ds|\\
\!\!\!&\leq&\!\!\!\!\!
M\beta_{k+1}e^{(k+1)\lambda(r+|t|)}|\xi_1-\xi_2|+\!\!\int_0^{t}\!\!(A_{k+1}^1+A_{k+1}^2) e^{(k+1)\lambda|s|}|\xi_1-\xi_2| ds|\\
\!\!\!&\leq&\!\!\!\!\!
\left(M\beta_{k+1}e^{(k+1)r\lambda}+A_{k+1}/\lambda\right) e^{(k+1)\lambda|t|}|\xi_1-\xi_2|.
\end{eqnarray*}
Then Lemma \ref{lm-GB4-2} is established.
\end{proof}
In the end of this section, we prove Theorem \ref{thm-1} by the Contraction Mapping Principle.
\begin{proof}[Proof of Theorem \ref{thm-1}]
The proof of this theorem is divided into four steps.
\vskip 0.2cm
\noindent{\bf Step (i).}
We first choose a suitable constant $\delta$ with $0<\delta\leq r_{0}$ to give the smallness condition.
Recall that the constants $r_0$ and $x^{*}$ defined as in {\bf (H1)}
only depend on $M$ and $M_{1}$. Let the constants $\beta_{j}$, $j=1,...,k+1$, satisfy the following conditions:
\begin{eqnarray*}
\beta_1\geq x^{*}/(x^{*}-Mx^{*}e^{x^{*}}-M_1r_0e^{x^{*}})>0,\ \ \ \beta_{j}>0, \ \ \  j=2,...,k+1.
\end{eqnarray*}
By (\ref{x-star}) we also have $\beta_{1}>0$.
We define the constant $\delta$ by
\begin{eqnarray*}
\delta:=\min\left\{r_0, x^{*}(1-Me^{mx^{*}})\beta_{m}/A_{m} \mbox{ for } m=2,...,k+1\right\},
\end{eqnarray*}
where $A_m$\,s are defined in Lemmas \ref{lm-GB4-1} and  \ref{lm-GB4-2}.
Note that $A_m$\,s are only determined by $\beta_{j}$ for $j=1,...,m$,
and $1-Me^{mx^{*}}>0$ for each $m=2,...,k+1$,
then $\delta$ is well defined and satisfies $\delta>0$.
\vskip 0.2cm
\noindent{\bf Step (ii).}
Secondly, for  the delay $r$ satisfying the smallness condition $r\in (0,\delta)$ and each fixed $d>0$,
we construct the desired complete metric space $\mathcal{B}_{d,\lambda}$,
where the constant $\lambda=x^{*}/r$.
By (\ref{x-star}) we can take a sufficiently large $\beta_{0}$ such that the following inequality holds:
\begin{eqnarray}\label{beta-0-restr}
d+M_0r_0/x^*+\beta_0(Mx^{*}e^{x^{*}}+M_1r_0e^{x^{*}})/x^{*}\leq \beta_0.
\end{eqnarray}
Define the constants $\beta_{j}$, $j=0,...,k+1$, associated with $\mathcal{B}_{d,\lambda}$ by the above way.
Then by Lemma \ref{lm-4-complete},
the set $\mathcal{B}_{d,\lambda}$ endowed with the metric $\rho$, which is induced by (\ref{norm-x}), is a complete metric space.
\vskip 0.2cm
\noindent{\bf Step (iii).}
Thirdly, we prove that the operator $\mathcal{T}$ defined by (\ref{map-1}) maps $\mathcal{B}_{d,\lambda}$ to itself.
Clearly, for each $x\in \mathcal{B}_{d,\lambda}$,
$\mathcal{T}(x)$ is a continuous map from $\mathbb{R}\times V^{0}_d$ to $\mathbb{R}^n$.
Note that  $F$ and $x$ are $C^k$ maps in the second variable,
and $L(t)$ is a linear map for each $t\in \mathbb{R}$.
Then  $\mathcal{T}(x)$ is a $C^k$ map with respect to the second variable.
For $r\in (0,\delta)$, by the condition $r\lambda=x^{*}$, (\ref{x-star}) and (\ref{beta-0-restr}) we have
\begin{eqnarray*}
d+M_0/\lambda+(Me^{r\lambda}+M_1e^{r\lambda}/\lambda)\beta_0
\leq d+M_0r_0/x^{*}+\beta_0(Mx^{*} e^{x^{*}}+M_1r_0e^{x^{*}})/x^{*}\leq \beta_0.
\end{eqnarray*}
In view of Lemma \ref{lm-GB4-1} (i),
we obtain $|\mathcal{T}(x)|_{\mathcal{B}_{d,\lambda}}\leq \beta_0$.

Recall that $\beta_1\geq x^{*}/(x^{*}-Mx^{*}e^{x^{*}}-M_1r_0e^{x^{*}})>0$.
Then $(Mx^{*}e^{x^{*}}+M_1r_0e^{x^{*}})/x^{*}\leq 1-1/\beta_1$, together with (\ref{x-star}), yields that
\begin{eqnarray*}
1+(Me^{r\lambda}+M_1e^{r\lambda}/\lambda)\beta_1
< 1+\beta_1(Mx^{*}e^{x^{*}}+M_1r_0e^{x^{*}})/x^{*}\leq 1+\beta_1(1-1/\beta_1)=\beta_1.
\end{eqnarray*}
It follows from Lemma \ref{lm-GB4-1} (ii) that
$|\mathcal{T}(x)|_{\mathcal{B}_{d,\lambda},1}\leq \beta_1$.

For $m=2,...,k+1$, note that $0<r< x^{*}(1-Me^{mx^{*}})\beta_{m}/A_{m}$,
then
\begin{eqnarray*}
M\beta_{m}e^{mr\lambda}+A_m/\lambda=M\beta_{m}e^{mx^{*}}+A_mr/x^{*}
\leq M\beta_{m}e^{mx^{*}}+A_mx^{*}(1-Me^{mx^{*}})\beta_{m}/(A_{m}x^{*})
\leq \beta_{m}.
\end{eqnarray*}
Then by Lemmas \ref{lm-GB4-1} (iii) and \ref{lm-GB4-2},
we have $|\mathcal{T}(x)|_{\mathcal{B}_{d,\lambda},m}\leq \beta_m$ for each $m=2,...,k+1$.
Thus, the operator $\mathcal{T}$  maps $\mathcal{B}_{d,\lambda}$ to itself.
\vskip 0.2cm
\noindent{\bf Step (iv).}
We finally prove that $\mathcal{T}$ is a contraction.
For any $x, y \in \mathcal{B}_{d,\lambda}$ and $(t,\xi) \in \mathbb{R}\times V^{0}_d$, we observe that
\begin{equation}
\begin{split}
|\mathcal{T}(x)(t,\xi)-\mathcal{T}(y)(t,\xi)|
\leq&\, |L(t)||x_t-y_t|+ |\int_0^{t} |F(s,x_s)-F(s,y_s)|ds|\\
\leq&\, M |x-y|_{\mathcal{B}_{d,\lambda}}e^{\lambda(r+|t|)}+|\int_0^{t} M_1|x_s-y_s|ds|\\
\leq&\, M |x-y|_{\mathcal{B}_{d,\lambda}}e^{\lambda(r+|t|)}+|\int_0^{t} M_1|x-y|_{\mathcal{B}_{d,\lambda}}e^{\lambda(r+|s|)}ds|\\
\leq&\, (Me^{r\lambda}+M_1e^{r\lambda}/\lambda)|x-y|_{\mathcal{B}_{d,\lambda}}e^{\lambda|t|}.\label{T-contr}
\end{split}
\end{equation}
Then by (\ref{x-star}) and (\ref{T-contr}),
\begin{eqnarray*}
|\mathcal{T}(x)-\mathcal{T}(y)|_{\mathcal{B}_{d,\lambda}}\leq((Mx^{*}e^{x^{*}}+M_1r_0e^{x^{*}})/x^{*})|x-y|_{\mathcal{B}_{d,\lambda}},
\end{eqnarray*}
which together with (\ref{x-star}) yields that $\mathcal{T}$ is a contraction.
By the Contraction Mapping Principle, $\mathcal{T}$ has a unique fixed point in the complete metric space $(\mathcal{B}_{d,\lambda}, \rho)$.
Assume that $\Psi$ is the unique fixed point of $\mathcal{T}$ in $(\mathcal{B}_{d,\lambda}, \rho)$.
By (\ref{map-1}), we can check that
$\Psi$ satisfies equation (\ref{NA-NDE}) and $\Psi(0,\xi)=\xi+L(0)\Psi_0$.
Therefore, Theorem \ref{thm-1} is proved.
\end{proof}

\begin{rmk}\label{rk-unique}
Assume that {\bf (H1)} holds.
By the similar method used in the proof of Theorem \ref{thm-1},
we can check that for each $\xi\in \mathbb{R}^{n}$,
there exists a unique solution $y$ of equation (\ref{NA-NDE}) satisfying
that $y$ is defined on $\mathbb{R}$, $\xi=y(0)-L(0)y_{0}$
and $\sup_{t\leq 0}|y(t)|e^{\lambda t}<+\infty$.
More precisely,
$y(t)=\Psi(t,\xi)$ for $t\in \mathbb{R}$, where $\Psi$ is defined in Theorem \ref{thm-1}.
In fact, to obtain this result,  we only need to consider the case $k=0$ in the proof of Theorem \ref{thm-1}.
\end{rmk}

\section{Proof of Theorem \ref{thm-1-2}}
\label{sec-pf-thm-1-2}

For any fixed $\phi\in \mathcal{C}$,
let $x(t)=x(t;0,\phi)$ for $t\in [-r,+\infty)$ denote the solution of equation (\ref{NA-NDE}) with $x_0=\phi$.
Let the constant $\lambda$ is defined as in {\bf (H2)} in this section.
Let $\mathcal{S}_{\lambda}$ denote the set of continuous maps $y:\mathbb{R}\to \mathbb{R}^n$ with
\begin{eqnarray*}
|y-x|_{\mathcal{S}_{\lambda},1}:=\sup_{t\geq 0}|y(t)-x(t)|e^{\lambda t}<+\infty \ \mbox{ and } \
|y|_{\mathcal{S}_{\lambda},2}:= \sup_{t\leq 0}|y(t)|e^{\lambda t}<+\infty.
\end{eqnarray*}
We define a map $\tilde{\rho}:\mathcal{S}_{\lambda}\times \mathcal{S}_{\lambda}\to \mathbb{R}$ in the form
$
\tilde{\rho}(f,g):=\sup_{t\in \mathbb{R}}|f(t)-g(t)|e^{\lambda t}
$
for any $f, g$ in $\mathcal{S}_{\lambda}$.
Note that for $f, g$ in $\mathcal{S}_{\lambda}$, we have
\begin{eqnarray*}
&&|f(t)-g(t)|e^{\lambda t}\leq |f(t)-x(t)|e^{\lambda t}+|g(t)-x(t)|e^{\lambda t} \ \ \ \mbox{ for } t\geq 0,\\
&&|f(t)-g(t)|e^{\lambda t}\leq |f(t)|e^{\lambda t}+|g(t)|e^{\lambda t} \ \ \ \mbox{ for } t\leq 0,
\end{eqnarray*}
then from the definition of the set $\mathcal{S}_{\lambda}$ it follows that the map $\tilde{\rho}$ is well defined.
Furthermore, we have the following result.

\begin{lm}
$(\mathcal{S}_{\lambda}, \tilde{\rho})$ is a complete metric space.
\end{lm}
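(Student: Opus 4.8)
The plan is to dispose of the metric axioms, which are routine, and then to prove completeness by the standard pointwise-limit argument; the only step that uses the structure of $\mathcal{S}_\lambda$ rather than soft generalities is checking that the candidate limit inherits the two weighted finiteness conditions in its definition. First I would observe that $\tilde\rho$ is finite by the two displayed estimates preceding the statement, is symmetric, and vanishes only when $f=g$ since $e^{\lambda t}>0$ for every $t$; the triangle inequality follows from $|f(t)-h(t)|e^{\lambda t}\le |f(t)-g(t)|e^{\lambda t}+|g(t)-h(t)|e^{\lambda t}$ upon taking the supremum over $t\in\mathbb{R}$.

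For completeness, let $\{y_m\}_{m\ge1}\subset\mathcal{S}_\lambda$ be $\tilde\rho$-Cauchy. For each fixed $t$, the inequality $|y_m(t)-y_{m'}(t)|\le e^{-\lambda t}\,\tilde\rho(y_m,y_{m'})$ shows $\{y_m(t)\}$ is Cauchy in $\mathbb{R}^n$, hence converges to some $y(t)$; since $e^{-\lambda t}$ is bounded on each compact interval, the convergence $y_m\to y$ is uniform on compacta, so $y\in C(\mathbb{R},\mathbb{R}^n)$. To see $y\in\mathcal{S}_\lambda$ I would use that a Cauchy sequence is bounded: fix $m_0$ and $R>0$ with $\tilde\rho(y_m,y_{m_0})\le R$ for all $m$; then for $t\ge0$ one has $|y_m(t)-x(t)|e^{\lambda t}\le R+|y_{m_0}-x|_{\mathcal{S}_\lambda,1}$, and for $t\le0$ one has $|y_m(t)|e^{\lambda t}\le R+|y_{m_0}|_{\mathcal{S}_\lambda,2}$, both uniformly in $m$; letting $m\to\infty$ in these pointwise bounds gives $|y-x|_{\mathcal{S}_\lambda,1}<+\infty$ and $|y|_{\mathcal{S}_\lambda,2}<+\infty$, i.e. $y\in\mathcal{S}_\lambda$. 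Finally, to obtain $\tilde\rho(y_m,y)\to0$, given $\varepsilon>0$ pick $N$ with $|y_m(t)-y_{m'}(t)|e^{\lambda t}<\varepsilon$ for all $t$ whenever $m,m'\ge N$, and let $m'\to\infty$ to conclude $\tilde\rho(y_m,y)\le\varepsilon$ for $m\ge N$.

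I do not anticipate a real obstacle; the only nontrivial point is the one above that controls the two global weights of the limit, which the ``Cauchy $\Rightarrow$ bounded'' device handles. Alternatively the lemma follows in one line: picking a continuous $\chi:\mathbb{R}\to[0,1]$ with $\chi\equiv1$ on $[0,\infty)$ and $\chi\equiv0$ on $(-\infty,-r]$, the assignment $y\mapsto e^{\lambda\,\cdot}\,(y-\chi x)$ is an isometry of $(\mathcal{S}_\lambda,\tilde\rho)$ onto the Banach space $C_b(\mathbb{R},\mathbb{R}^n)$ of bounded continuous maps under the supremum norm, whose completeness is classical. In either form this is a simplified version of the argument for Lemma~\ref{lm-4-complete}, with no derivatives present and hence no appeal to Henry's lemma (Lemma~\ref{Henry-lm}).
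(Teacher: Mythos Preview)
Your first argument is correct and mirrors the paper's proof: the paper also obtains the limit by passing to the weighted functions $g_m(t)e^{\lambda t}$ (Cauchy in sup norm), defines $g_0$ accordingly, and then verifies $g_0\in\mathcal{S}_\lambda$ by the same triangle-inequality bound against a fixed element of the sequence that you call the ``Cauchy $\Rightarrow$ bounded'' device. Your organization (pointwise limit, uniform on compacta for continuity, then the two weighted bounds) is just a slightly more explicit version of the same steps.

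Your isometry alternative is genuinely different and cleaner than what the paper does. The paper's phrasing that $\{g_m(t)e^{\lambda t}\}$ is Cauchy ``in $C_b(\mathbb{R})$'' is in fact a little loose, since for $t\ge0$ one only knows $(g_m(t)-x(t))e^{\lambda t}$ is bounded, not $g_m(t)e^{\lambda t}$ itself; your subtraction of $\chi x$ is exactly what repairs this and turns the statement into an honest isometry onto $C_b(\mathbb{R},\mathbb{R}^n)$. One small point to make explicit: $x$ is only defined on $[-r,+\infty)$, so you should say that $\chi x$ is extended by $0$ on $(-\infty,-r)$, which is continuous since $\chi(-r)=0$. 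With that understood, the one-line reduction to completeness of $C_b$ is a nice improvement over both your direct argument and the paper's.
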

\begin{proof}
Clearly, the map $\tilde{\rho}$ induces a metric on the set $\mathcal{S}_{\lambda}$.
To prove the completeness of the metric space $(\mathcal{S}_{\lambda}, \tilde{\rho})$,
take any Cauchy sequence $\{g_m\}_{m=1}^{+\infty}$ of $\mathcal{S}_{\lambda}$,
that is, for any $\epsilon>0$, there is a positive integer $N(\epsilon)$
such that for any positive integer $m, m'\geq N(\epsilon)$, $\tilde{\rho}(g_{m'},g_m)<\epsilon$,
which implies
\begin{eqnarray*}\label{sup-ym}
\sup_{t\geq 0}|g_{m'}(t)-g_{m}(t)|e^{\lambda t}<\epsilon, \ \ \ \sup_{t\leq 0}|g_{m'}(t)-g_{m}(t)|e^{\lambda t}<\epsilon.
\end{eqnarray*}
Similarly to Lemma \ref{lm-4-complete}, we observe that $\{g_m(t)e^{\lambda t}\}_{m=1}^{+\infty}$ is a Cauchy sequence in $C_b(\mathbb{R})$.
Let $\tilde{g}_0(t)$ be the limit of $g_{m}(t)e^{\lambda t}$
in $C_b(\mathbb{R})$ and $g_0(t):=\tilde{g}_0(t)e^{-\lambda t}$. Then we have
$\tilde{\rho}(g_{m},g_{0})\to 0$ as $m\to +\infty$.

By the definition of the set $\mathcal{S}_{\lambda}$, we have for sufficiently large $m$,
\begin{eqnarray*}
&&\sup_{t\geq 0}|g_{0}(t)-x(t)|e^{\lambda t}
\leq \sup_{t\geq 0}|g_{0}(t)-g_m(t)|e^{\lambda t}+\sup_{t\geq 0}|g_{m}(t)-x(t)|e^{\lambda t}<+\infty,\\
&&\sup_{t\leq 0}|g_{0}(t)|e^{\lambda t}
\leq \sup_{t\leq 0}|g_{0}(t)-g_m(t)|e^{\lambda t}+\sup_{t\leq 0}|g_{m}(t)|e^{\lambda t}<+\infty,
\end{eqnarray*}
which implies that $g_0\in \mathcal{S}_{\lambda}$.
Therefore, the proof is complete.
\end{proof}

Next we prove Theorem \ref{thm-1-2} by constructing a contraction operator
on the complete metric space $(\mathcal{S}_{\lambda}, \tilde{\rho})$,
and then  applying the statements in Remark \ref{rk-unique}.
The similar method was widely used to establish the existence of the desired solutions of differential equations
(see, for instance, \cite{Burton-85,Driver1976}).
\begin{proof}[Proof of Theorem \ref{thm-1-2}]
For any $y\in \mathcal{S}_{\lambda}$, we define the operator $\mathcal{Q}$:
\qquad \begin{eqnarray}\label{eq-def-T}
\mathcal{Q}(y)(t)
=\left\{
\begin{array}{ll}
L(t)y_t+x(t)-L(t)x_t-\int_{t}^{+\infty}(F(s,y_s)-F(s,x_s)) ds,
& t>0,
\\
x(0)-L(0)x_0-\int_{0}^{+\infty}(F(s,y_s)-F(s,x_s)) ds +L(t)y_t
\\
+\int_{0}^{t}F(s,y_s) ds,
& t\leq0.
\end{array}
\right.
\end{eqnarray}
For any $y\in \mathcal{S}_{\lambda}$ and $t_2\geq t_1\geq r$, we note that
\begin{eqnarray*}
\lefteqn{|\int_{t_1}^{t_2}(F(s,y_s)-F(s,x_s)) ds|}\\
&\leq&\!\!\!\!
|\int_{t_1}^{t_2} M_1|y_s-x_s| ds|\leq M_1|\int_{t_1}^{t_2}|y-x|_{\mathcal{S}_{\lambda},1}\, e^{\lambda(r-s)} ds|
\leq M_1|y-x|_{\mathcal{S}_{\lambda},1}\,e^{\lambda(r-t_1)}/\lambda.
\end{eqnarray*}
By the Cauchy Convergence Principle, we see that the integral $\int_{t}^{+\infty}(F(s,y_s)-F(s,x_s)) ds$ is well defined for $t\geq 0$.
Moreover, for $t \geq 0$ we find that
\begin{equation}
\begin{split}
|\int_{t}^{+\infty}(F(s,y_s)-F(s,x_s)) ds| \leq&\, M_1\int_{t}^{+\infty}|y_s-x_s| ds\leq M_1\int_{0}^{+\infty}|y_s-x_s| ds\\
=&\,
M_1\int_{0}^{r}|y_s-x_s| ds+M_1\int_{r}^{+\infty}|y_s-x_s| ds\\
\leq &\,
M_1r\left(\max_{t\in[-r,0]}|y(t)-x(t)|+\max_{t\in[0,r]}|y(t)-x(t)|\right)
\\
&\,+M_1|y-x|_{\mathcal{S}_{\lambda},1}/\lambda\\
\leq &\, M_1r\left(|y|_{\mathcal{S}_{\lambda},2}e^{r\lambda}+|\phi|+|y-x|_{\mathcal{S}_{\lambda},1}\right)+M_1|y-x|_{\mathcal{S}_{\lambda},1}/\lambda.
\label{est-Fy-Fx}
\end{split}
\end{equation}
Since both $x$ and $y$ are continuous on $[-r,+\infty)$ and $\mathbb{R}$, we have
\begin{equation}
\begin{split}
\mathcal{Q}(y)(0^+)=&\,L(0)y_0+x(0)-L(0)x_0-\int_{0}^{+\infty}(F(s,y_s)-F(s,x_s)) ds\label{Ty-0}\\
=&\,\mathcal{Q}(y)(0^-)=\mathcal{Q}(y)(0).
\end{split}
\end{equation}
Thus $T(y)$ is continuous at $t=0$. Furthermore, by the continuity of $x$ and $y$, $T(y)$ is continuous on $\mathbb{R}$.
As $t\geq r$, we have
\begin{eqnarray*}
|\mathcal{Q}(y)(t)-x(t)|\!\!\!&\leq&\!\!\! |L(t)(y_t-x_t)| +|\int_{t}^{+\infty}(F(s,y_s)-F(s,x_s)) ds|\\
\!\!\!&\leq&\!\!\! M|y_t-x_t|+|\int_{t}^{+\infty}M_1|y_s-x_s|ds|\\
\!\!\!&\leq&\!\!\! (M+ M_1/\lambda)|y-x|_{\mathcal{S}_{\lambda},1}\,e^{\lambda(r-t)},
\end{eqnarray*}
which implies $|\mathcal{Q}(y)-x|_{\mathcal{S}_{\lambda},1}<+\infty$.
As $t\leq 0$, by (\ref{est-Fy-Fx}), (\ref{Ty-0}) and the properties of the space $\mathcal{S}_{\lambda}$, we find that
\begin{equation}
\label{Ty-est}
\begin{split}
|\mathcal{Q}(y)(t)|
\leq&\,
|x(0)-L(0)x_0-\int_{0}^{+\infty}(F(s,y_s)-F(s,x_s)) ds|\\
&\,
+|L(t)y_t|+|\int_{0}^{t}(F(s,y_s)-F(s,0)) ds|+|\int_0^{t} |F(s,0)|ds|\\
\leq&\, |x(0)-L(0)x_0|+\int_{0}^{+\infty} M_1 |y_s-x_s| ds\\
&\,
+M|y|_{\mathcal{S}_{\lambda}, 2}e^{\lambda(r-t)}+|\int_{0}^{t}M_1|y|_{\mathcal{S}_{\lambda},2}e^{\lambda(r-s)} ds|+|\int_0^t M_0 e^{\lambda |s|}ds|\\
&\leq\,
(1+M)|\phi|+M_1r\left(|y|_{\mathcal{S}_{\lambda},2}e^{r\lambda}+|\phi|+|y-x|_{\mathcal{S}_{\lambda},1}\right)
+M_1|y-x|_{\mathcal{S}_{\lambda},1}/\lambda\\
&\,
+M|y|_{\mathcal{S}_{\lambda}, 2}e^{\lambda(r-t)}+M_1|y|_{\mathcal{S}_{\lambda}, 2}\,e^{\lambda(r-t)} /\lambda+M_0e^{-\lambda t}/\lambda\\
=&\,
\left\{(1+M+M_1r)|\phi|+M_0/\lambda+M_1(r+1/\lambda)|y-x|_{\mathcal{S}_{\lambda},1}\right.\\
&\,\left.+(M_1re^{r\lambda}+Me^{r\lambda}+M_1e^{r\lambda} /\lambda)|y|_{\mathcal{S}_{\lambda},2}\right\}e^{-\lambda t},
\end{split}
\end{equation}
which yields that $|\mathcal{Q}(y)|_{\mathcal{S}_{\lambda},2}<+\infty$.
Therefore, $\mathcal{Q}$ maps $\mathcal{S}_{\lambda}$ into itself.

To prove that $\mathcal{Q}$ is a contraction,
for any $y,z \in \mathcal{S}_{\lambda}$ and $t\in \mathbb{R}$, we see that
\begin{eqnarray*}
|\mathcal{Q}(y)(t)-\mathcal{Q}(z)(t)|
\!\!\!&\leq&\!\!\!\!
|L(t)(y_t-z_t)|+|\int_{t}^{+\infty}(F(s,y_s)-F(s,z_s))ds|\\
\!\!\!&\leq&\!\!\!\!
Md(x,y)e^{\lambda(r-t)}+|\!\int_{t}^{+\infty}\!\!\! M_1\tilde{\rho}(x,y)e^{\lambda(r-s)}ds|\\
\!\!\!&\leq&\!\!\!\!
\left(Me^{r\lambda}+M_1e^{r\lambda}/\lambda\right)e^{-\lambda t} \tilde{\rho}(x,y),
\end{eqnarray*}
together with (\ref{x-star}), yields
$$
|\mathcal{Q}(y)(t)-\mathcal{Q}(z)(t)|e^{\lambda t} \leq  \left((Mx^{*}e^{x^{*}}+M_1r_0e^{x^{*}})/x^{*}\right)\tilde{\rho}(x,y)<\tilde{\rho}(x,y).
$$
Thus $\mathcal{Q}$ is a contraction.

Applying the Contraction Mapping Principle, $\mathcal{Q}$ has a unique fixed point in the complete metric space $(\mathcal{S}_{\lambda}, \tilde{\rho})$,
denoted by $y$. By (\ref{eq-def-T}) we can check that
$y$ satisfies equation (\ref{NA-NDE}) for $t\in \mathbb{R}$
and by (\ref{Ty-est}) we have $|y|_{\mathcal{S}_{\lambda},2}<+\infty$.
Take $\xi=x(0)-L(0)x_0-\int_{0}^{+\infty}(F(s,y_s)-F(s,x_s)) ds\in\mathbb{R}^{n}$.
Using (\ref{eq-def-T}) again, we find that $\xi=y(0)-L(0)y_0$.
Furthermore, by Remark \ref{rk-unique} we have the fixed point $y(t)=\Psi(t,\xi)$ for $t\in \mathbb{R}$.
In the end, following the definition of $\mathcal{S}_{\lambda}$,
we obtain $\sup_{t\geq0}|x(t; 0,\phi)-y(t)|e^{\lambda t}<+\infty$.
Therefore, the proof of Theorem \ref{thm-1-2} is complete.
\end{proof}

\section{Proof of Theorem \ref{thm-2}}
\label{sec-pf-thm3}
For any fixed constant $d>1$, set $V_{0}:=V_{d}^0=\{\xi\in \mathbb{R}^n: |\xi|<d\}$ and $V_{1}:=R^n\backslash V_d^0$.
Let the constants $r_0$, $\delta$, $\lambda$ be given in {\bf (H2)}
and $\Omega$ denote the interval $(0,\delta)$.
For each $\gamma\in\{0,1\}$,
let $\mathcal{E}_{\gamma,\lambda}$ be the set of continuous maps
from $\mathbb{R}\times\Omega\times V_{\gamma}$ to $\mathbb{R}^n$,
and satisfy for each $x\in \mathcal{E}_{\gamma,\lambda}$,
$x$ is $C^k$ in $(t,r) \in \mathbb{R}\times\Omega$ for each $\xi\in V_{\gamma}$ and
for some constants $\varepsilon_{j}>0$, $j=0,1,...,k+1$,
\begin{eqnarray}
|x|_{\mathcal{E}_{\gamma,\lambda}}\!\!\!&:=&\!\!\!
\sup_{(t,r,\xi)\in \mathbb{R}\times\Omega\times V_{\gamma}} |x(t,r,\xi)|(e^{\lambda|t|}|\xi|^{\gamma})^{-1}
\leq \varepsilon_0,\label{2-norm-x}\\
|x|_{\mathcal{E}_{\gamma,\lambda},j}\!\!\!&:=&\!\!\!
\sup_{(t,r,\xi)\in \mathbb{R}\times\Omega\times V_{\gamma}} |D^jx(t,r,\xi)|(e^{\lambda|t|}|\xi|^{\gamma})^{-j}
\leq \varepsilon_j, \ \ \ j=1,2,...,k,\label{2-bd-xj}\\
|x|_{\mathcal{E}_{\gamma,\lambda},k+1}\!\!\!&:=&\!\!\!\!\!\!\!\!
\!\!\!\sup_{\xi\in V_{\gamma},\, (t_1,r_1)\neq (t_2,r_2), (t_i,r_i)\in\mathbb{R}\times\Omega}\!\!\!\!\!\!\!\!\!
\frac{|D^k x(t_1,r_1,\xi)-D^k x(t_2,r_2,\xi)|}{|(t_1,r_1)-(t_2,r_2)|}(e^{\lambda|t_{*}|}|\xi|^{\gamma})^{-(k+1)}
\!\! \leq \varepsilon_{k+1},\label{2-bd-lip-x}
\end{eqnarray}
where $|t_*|=\max\{|t_1|,|t_2|\}$ and for simplicity,
in this section $D^{j}x$ denote the {\it j-}th derivative of $x$ with respect to $(t,r)$.
\begin{lm}\label{lm-5-complete}
For each $\gamma \in \{0,1\}$,
the metric space $(\mathcal{E}_{\gamma,\lambda}, \rho_{\gamma})$ is complete,
where $\rho_{\gamma}(x,y)=|x-y|_{\mathcal{E}_{\gamma,\lambda}}$ for any $x, y\in \mathcal{E}_{\gamma,\lambda}$.
\end{lm}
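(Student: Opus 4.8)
The plan is to follow the scheme of the proof of Lemma~\ref{lm-4-complete}: identify a weighted space of bounded continuous maps in which the Cauchy sequence converges, and then use Henry's lemma (Lemma~\ref{Henry-lm}) to recover the $C^{k,1}$ regularity of the limit. First I would note that $\rho_{\gamma}$ is well defined on $\mathcal{E}_{\gamma,\lambda}\times\mathcal{E}_{\gamma,\lambda}$ and induces a metric, exactly as in Lemma~\ref{lm-4-complete}. Let $\{g_m\}_{m=1}^{+\infty}\subset\mathcal{E}_{\gamma,\lambda}$ be a Cauchy sequence with respect to $\rho_{\gamma}$. By the definition of $\rho_{\gamma}$ and (\ref{2-norm-x}), the sequence $\{g_m(t,r,\xi)(e^{\lambda|t|}|\xi|^{\gamma})^{-1}\}_{m=1}^{+\infty}$ is Cauchy in the Banach space $C_b(\mathbb{R}\times\Omega\times V_{\gamma},\mathbb{R}^n)$ of bounded continuous maps equipped with the supremum norm (the weight $(e^{\lambda|t|}|\xi|^{\gamma})^{-1}$ is precisely what makes these maps bounded). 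Denoting its limit by $\tilde g_0$ and setting $g_0(t,r,\xi):=\tilde g_0(t,r,\xi)e^{\lambda|t|}|\xi|^{\gamma}$, we obtain a continuous map on $\mathbb{R}\times\Omega\times V_{\gamma}$ with $\rho_{\gamma}(g_m,g_0)\to0$ as $m\to+\infty$; it remains to verify $g_0\in\mathcal{E}_{\gamma,\lambda}$.

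The one genuine difference from Lemma~\ref{lm-4-complete} is that here the differentiation variables are $(t,r)$ while the weight $e^{\lambda|t|}$ contains the differentiation variable $t$ and is unbounded, so — unlike in Lemma~\ref{lm-4-complete}, where one could freeze $t$ and vary only $\xi$ — Henry's lemma cannot be applied globally; instead one localizes in $t$. Fix $\xi\in V_{\gamma}$ and, for each $T>0$, set $U_T:=(-T,T)\times\Omega$ and $\widetilde{g}_m^{\xi}(t,r):=g_m(t,r,\xi)$ on $U_T$. On $U_T$ one has $e^{\lambda|t|}|\xi|^{\gamma}\leq b_T:=e^{\lambda T}|\xi|^{\gamma}$, and likewise $e^{\lambda|t_*|}|\xi|^{\gamma}\leq b_T$ for $(t_1,r_1),(t_2,r_2)\in U_T$ with $|t_*|=\max\{|t_1|,|t_2|\}$, so (\ref{2-norm-x})--(\ref{2-bd-lip-x}) give $\{\widetilde{g}_m^{\xi}\}_{m=1}^{+\infty}\subset C^{k,1}_{c(T,\xi)}$, where $c(T,\xi):=\max\{\varepsilon_0 b_T,\ \varepsilon_j b_T^{\,j}\ (1\leq j\leq k),\ \varepsilon_{k+1}b_T^{\,k+1}\}$; moreover $|\widetilde{g}_m^{\xi}-\widetilde{g}_0^{\xi}|_{\infty}\leq b_T\,\rho_{\gamma}(g_m,g_0)\to0$ on $U_T$. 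Hence Lemma~\ref{Henry-lm} yields $\widetilde{g}_0^{\xi}\in C^{k,1}_{c(T,\xi)}$ and $D^j\widetilde{g}_m^{\xi}(t,r)\to D^j\widetilde{g}_0^{\xi}(t,r)$ as $m\to+\infty$ for each $(t,r)\in U_T$ and $j=1,\dots,k$. Since $T>0$ is arbitrary, $g_0$ is $C^k$ in $(t,r)$ on all of $\mathbb{R}\times\Omega$, and $D^jg_m(t,r,\xi)\to D^jg_0(t,r,\xi)$ pointwise for every $(t,r,\xi)\in\mathbb{R}\times\Omega\times V_{\gamma}$ and every $1\leq j\leq k$.

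Finally I would pass to the limit in the defining inequalities. From $|g_m(t,r,\xi)|(e^{\lambda|t|}|\xi|^{\gamma})^{-1}\leq\varepsilon_0$ and $|D^jg_m(t,r,\xi)|(e^{\lambda|t|}|\xi|^{\gamma})^{-j}\leq\varepsilon_j$ for all $m$, letting $m\to+\infty$ and using the pointwise convergence gives the same bounds for $g_0$, i.e.\ (\ref{2-norm-x}) and (\ref{2-bd-xj}). For (\ref{2-bd-lip-x}), fix $\xi\in V_{\gamma}$ and $(t_1,r_1)\neq(t_2,r_2)$ in $\mathbb{R}\times\Omega$, choose $T$ large enough that both points lie in $U_T$, and pass to the limit (using $D^kg_m(t_i,r_i,\xi)\to D^kg_0(t_i,r_i,\xi)$) in
\[
\frac{|D^k g_m(t_1,r_1,\xi)-D^k g_m(t_2,r_2,\xi)|}{|(t_1,r_1)-(t_2,r_2)|}\,(e^{\lambda|t_{*}|}|\xi|^{\gamma})^{-(k+1)}\leq\varepsilon_{k+1},
\]
which yields (\ref{2-bd-lip-x}) for $g_0$. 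Together with the joint continuity of $g_0$, this shows $g_0\in\mathcal{E}_{\gamma,\lambda}$, and hence $(\mathcal{E}_{\gamma,\lambda},\rho_{\gamma})$ is complete. I expect the only real obstacle to be the localization just described — running Henry's lemma on the exhausting family $\{U_T\}_{T>0}$ and letting $T\to+\infty$, together with the bookkeeping of the $t$-dependent (and, when $\gamma=1$, $\xi$-dependent) weights; the rest is a routine adaptation of the argument for Lemma~\ref{lm-4-complete}.
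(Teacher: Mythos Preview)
Your proposal is correct and follows essentially the same route as the paper: obtain the limit $g_{0}$ in the weighted uniform norm, then for each fixed $\xi$ localize to a bounded $t$-interval $(-T,T)\times\Omega$, apply Henry's lemma there to recover $C^{k,1}$ regularity and pointwise convergence of the derivatives, and finally let $T\to+\infty$ and pass to the limit in (\ref{2-norm-x})--(\ref{2-bd-lip-x}). The paper's proof is slightly terser (it invokes Lemma~\ref{lm-4-complete} for the first step and writes $t_{0}$ for your $T$), but the argument is the same.
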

\begin{proof}
Clearly, for each $\gamma \in \{0,1\}$, $\rho_{\gamma}$ is well defined
and induces a metric for the set $\mathcal{E}_{\gamma,\lambda}$.
To prove the completeness of the metric space $(\mathcal{E}_{\gamma,\lambda}, \rho_{\gamma})$,
take any Cauchy sequence $\{g_m\}_{m=1}^{+\infty}$ of $\mathcal{E}_{\gamma,\lambda}$,
that is, for any $\epsilon>0$,
there is a positive integer $N(\epsilon)$ such that for any $m, m' \geq N(\epsilon)$,
\begin{eqnarray}\label{Cauchy-seq-2}
\rho_{\gamma}(g_{m'},g_m)=\sup_{(t,r,\xi)\in \mathbb{R}\times\Omega\times V_{\gamma}}
|g_{m'}(t,r,\xi)-g_m(t,r,\xi)|(e^{\lambda|t|}|\xi|^{\gamma})^{-1} <\!\epsilon.
\end{eqnarray}
Similarly to Lemma \ref{lm-4-complete}, there exists a continuous map $g_0$ from $\mathbb{R}\times\Omega\times V_{\gamma}$ to $\mathbb{R}^n$ such that $\rho_{\gamma}(g_{m},g_{0})\to 0$ as $m\to+\infty$.

Next we claim that $g_0 \in \mathcal{E}_{\gamma,\lambda}$.
For any $g\in \mathcal{E}_{\gamma,\lambda}$ and any $t_0>0$,
let $\xi\in V_{\gamma}$ be fixed and $\widetilde{g}(t,r)=g(t,r,\xi)$ for $(t,r)\in(-t_0,t_0)\times(0,\delta)$.
Then we see that
$|\widetilde{g}(t,r)|=|g(t,r,\xi)|
\leq |g|_{\mathcal{E}_{\gamma,\lambda}} e^{\lambda|t_0|}|\xi|^{\gamma}.$
Recall that $\rho_{\gamma}(g_{m},g_{0})\to 0$ as $m\to+\infty$.
Then we have $|\widetilde{g}_m-\widetilde{g}_0|_{\infty}\to0$ as $m\to+\infty$.
From (\ref{2-norm-x})-(\ref{2-bd-lip-x}) it follows that
$\{\widetilde{g}_m\}_{m=1}^{+\infty}\subset C_{\varepsilon(t_0,\xi)}^{k,1}$,
where $\varepsilon(t_0,\xi)=\max\{\varepsilon_{0}e^{\lambda|t_0|}|\xi|^{\gamma},
\varepsilon_j(e^{\lambda|t_0|}|\xi|^{\gamma})^{j} \mbox{ for }j=1,2,...,k+1\}$.
Hence, by Lemma \ref{Henry-lm}, we obtain
$\widetilde{g}_0\in C_{\varepsilon(t_0,\xi)}^{k,1}$
and $D^j\widetilde{g}_m(t,r)\to D^j\widetilde{g}_0(t,r)$ as $m\to +\infty$
for each $(t,r)\in (-t_0,t_0)\times(0,\delta)$ and $j=1,2,...,k$.
Note that the arbitrariness of $t_0$. Using (\ref{2-norm-x})-(\ref{2-bd-lip-x}) again, we obtain that $g_0\in \mathcal{E}_{\gamma,\lambda}$.
Thus, the claim is true and Lemma \ref{lm-5-complete} is established.
\end{proof}

On each $\mathcal{E}_\gamma$, we define a map $\mathcal{F}$ in the form
\begin{eqnarray}\label{map-2}
\mathcal{F}(x)(t,r,\xi):=\xi+Ax(t-r,r,\xi)+\int_0^{t}f(x(s,r,\xi),x(s-r,r,\xi))ds.
\end{eqnarray}
To  prove Theorem \ref{thm-2}, we first make some preparations.

\begin{lm}\label{lm-GB5-1}
Let $x\in \mathcal{E}_{\gamma,\lambda}$ and $g(t,r,\xi)=x(t-r,r,\xi)$ for $(t,r,\xi)\in \mathbb{R}\times\Omega\times V_{\gamma}$.
Then for each $\boldsymbol{\nu}=(\nu_1,\nu_2)\in\mathbb{N}_0^2$ with $1\leq|\boldsymbol{\nu}|\leq k$,
the following results hold:
\begin{itemize}
\item[(i)]
For any $(t,r,\xi) \in \mathbb{R}\times\Omega\times V_{\gamma}$,
\begin{eqnarray}\label{deriv-xtr}
\frac{\partial^{\nu_1+\nu_2}}{\partial t^{\nu_1}\partial r^{\nu_2} }g(t,r,\xi)
=\sum_{j=0}^{\nu_2}\frac{(-1)^{j}v_2!}{j!(\nu_2-j)!}
\frac{\partial^{\nu_1+\nu_2}}{\partial y_1^{\nu_1+j}\partial y_2^{\nu_2-j}} x(t-r,r,\xi),
\end{eqnarray}
and
$
|\frac{\partial^{\nu_1+\nu_2}}{\partial t^{\nu_1}\partial r^{\nu_2} }g(t,r,\xi)|
\leq \varepsilon_{|\boldsymbol{\nu}|}S_{\boldsymbol{\nu}}\left(e^{\lambda|t|}|\xi|^{\gamma}\right)^{|\boldsymbol{\nu}|},
$
where the constant $S_{\boldsymbol{\nu}}=2^{\nu_2}e^{|\boldsymbol{\nu}|x^{*}}$
and $x^{*}$ is defined in {\bf (H2)}.

\item[(ii)]
For any $(t_1,r_1,\xi),\,(t_2,r_2,\xi)$ in $\mathbb{R}\times\Omega\times V_{\gamma}$ with $(t_1,r_1)\neq(t_2,r_2)$
and $|\boldsymbol{\nu}|=k$,
\begin{eqnarray*}
\!\!\! |\frac{\partial^{k}}{\partial t^{\nu_1}\partial r^{\nu_2} }g(t_1,r_1,\xi)
-\frac{\partial^{k}}{\partial t^{\nu_1}\partial r^{\nu_2} }g(t_2,r_2,\xi)|
\leq \varepsilon_{k+1}S_{(\nu_1,\nu_2+1)}\left(e^{\lambda|t_{*}|}|\xi|^{\gamma}\right)^{k+1}
|(t_1,r_1)-(t_2,r_2)|,
\end{eqnarray*}
where $|t_{*}|=\max\{|t_1|,|t_2|\}$.

\item[(iii)]
For any $(t,r,\xi) \in \mathbb{R}\times\Omega\times V_{\gamma}$,
$
|\frac{\partial^{\nu_1+\nu_2}}{\partial t^{\nu_1}\partial r^{\nu_2} }f(x(t,r,\xi),x(t-r,r,\xi))|
\leq T_{\boldsymbol{\nu}}\left(e^{\lambda|t|}|\xi|^{\gamma}\right)^{|\boldsymbol{\nu}|},
$
where the constant
$$
T_{\boldsymbol{\nu}}= \sum_{1\leq |\boldsymbol{\omega}|\leq |\boldsymbol{\nu}|}M_{|\boldsymbol{\omega}|}
\sum_{s=1}^{|\boldsymbol{\nu}|}\sum_{p_s(\boldsymbol{\nu},\boldsymbol{\omega})}(\boldsymbol{\nu}!)
\prod_{j=1}^{s}\frac{1}{(\boldsymbol{k_j}!)(\boldsymbol{l_j}!)^{|\boldsymbol{k_j}|}}
\varepsilon_{|\boldsymbol{l_j}|}^{k_{j1}}\left(\varepsilon_{|\boldsymbol{l_j}|}S_{\boldsymbol{l_j}}\right)^{k_{j2}},
$$
and $\boldsymbol{\omega}=(\omega_1,\omega_2)$, $\boldsymbol{k_j}=(k_{j1},k_{j2})$, $\boldsymbol{l_j}=(l_{j1},l_{j2})$, $p_s(\boldsymbol{\nu},\boldsymbol{\omega})$
are defined in Lemma \ref{partial-devt}.
\end{itemize}
\end{lm}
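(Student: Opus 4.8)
The plan is to treat $g(t,r,\xi)=x(t-r,r,\xi)$ as the composition of $x$ with the affine substitution $\Phi(t,r)=(t-r,r)$ in its first two arguments, so that the three assertions reduce to the weighted bounds (\ref{2-norm-x})--(\ref{2-bd-lip-x}) defining $\mathcal{E}_{\gamma,\lambda}$, supplemented by part (i) when composing with $f$ in part (iii). For (i), I would first record the operator identities $\partial_t(x\circ\Phi)=(\partial_{y_1}x)\circ\Phi$ and $\partial_r(x\circ\Phi)=((\partial_{y_2}-\partial_{y_1})x)\circ\Phi$, note that $\partial_t$ and $\partial_r$ commute, and conclude $\partial_t^{\nu_1}\partial_r^{\nu_2}g=(\partial_{y_1}^{\nu_1}(\partial_{y_2}-\partial_{y_1})^{\nu_2}x)\circ\Phi$; expanding $(\partial_{y_2}-\partial_{y_1})^{\nu_2}$ by the binomial theorem then gives exactly (\ref{deriv-xtr}). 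For the estimate, each summand $\partial_{y_1}^{\nu_1+j}\partial_{y_2}^{\nu_2-j}x(t-r,r,\xi)$ is a partial derivative of order $|\boldsymbol{\nu}|$ of $x$ in $(t,r)$, so by Lemma \ref{derivt} (together with the multilinear operator norm) and (\ref{2-bd-xj}) it is bounded by $\varepsilon_{|\boldsymbol{\nu}|}(e^{\lambda|t-r|}|\xi|^{\gamma})^{|\boldsymbol{\nu}|}$; since $r<\delta$ and $\delta\lambda=x^{*}$ by {\bf (H2)} one has $e^{\lambda|t-r|}\le e^{\lambda|t|}e^{\lambda r}\le e^{\lambda|t|}e^{x^{*}}$, and summing $\sum_{j=0}^{\nu_2}\binom{\nu_2}{j}=2^{\nu_2}$ produces the constant $S_{\boldsymbol{\nu}}=2^{\nu_2}e^{|\boldsymbol{\nu}|x^{*}}$.

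For (ii), I would subtract the representation (\ref{deriv-xtr}) at the two points $(t_1,r_1,\xi)$ and $(t_2,r_2,\xi)$ and estimate each of the resulting $\binom{\nu_2}{j}$-weighted differences of $k$-th order partial derivatives of $x$ by (\ref{2-bd-lip-x}), applied at the points $(t_1-r_1,r_1)$ and $(t_2-r_2,r_2)$. Here I would use that in the max-norm on $\mathbb{R}^2$ one has $|(t_1-r_1,r_1)-(t_2-r_2,r_2)|\le 2|(t_1,r_1)-(t_2,r_2)|$, while $\max\{|t_1-r_1|,|t_2-r_2|\}\le|t_{*}|+\delta$ yields the extra weight factor $e^{(k+1)x^{*}}$; collecting constants gives $2^{\nu_2+1}e^{(k+1)x^{*}}=S_{(\nu_1,\nu_2+1)}$, since $|\boldsymbol{\nu}|=k$.

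For (iii), I would apply the multivariate Fa\`a di Bruno formula (Lemma \ref{partial-devt}), componentwise in the $n$ coordinates of $f$, to $h=f\circ G$ with inner map $G(t,r)=(x(t,r,\xi),x(t-r,r,\xi))\in\mathbb{R}^n\times\mathbb{R}^n$ and outer variables $(t,r)$. Then I would bound $|D^{\boldsymbol{\omega}}_{y}f|\le M_{|\boldsymbol{\omega}|}$ by {\bf (H2)}, bound each first-block inner derivative $\partial_t^{l_{j1}}\partial_r^{l_{j2}}x$ by $\varepsilon_{|\boldsymbol{l_j}|}(e^{\lambda|t|}|\xi|^{\gamma})^{|\boldsymbol{l_j}|}$ via (\ref{2-bd-xj}) and each second-block inner derivative $\partial_t^{l_{j1}}\partial_r^{l_{j2}}g$ by $\varepsilon_{|\boldsymbol{l_j}|}S_{\boldsymbol{l_j}}(e^{\lambda|t|}|\xi|^{\gamma})^{|\boldsymbol{l_j}|}$ via part (i), grouping the block multiplicities into $k_{j1}$ and $k_{j2}$; the total power of the weight $e^{\lambda|t|}|\xi|^{\gamma}$ then equals $\sum_{j=1}^{s}|\boldsymbol{k_j}|\,|\boldsymbol{l_j}|=|\boldsymbol{\nu}|$ because $\sum_{j=1}^{s}|\boldsymbol{k_j}|\boldsymbol{l_j}=\boldsymbol{\nu}$, and after reassembling the combinatorial coefficients one reads off $T_{\boldsymbol{\nu}}$. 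I expect the bookkeeping in (iii) to be the main obstacle: one must carefully pass from the scalar-valued statement of Lemma \ref{partial-devt} to the block form in which the inner map is $\mathbb{R}^n\times\mathbb{R}^n$-valued, and check that the multi-index conditions and the factorial factors collapse precisely to $T_{\boldsymbol{\nu}}$; once the affine chain rule in (i) is established, parts (i) and (ii) are routine.
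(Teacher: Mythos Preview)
Your proposal is correct and follows essentially the same route as the paper: for (i) the paper also writes $\partial_t^{\nu_1}\partial_r^{\nu_2}g$ as the binomial expansion of $(\partial_{y_2}-\partial_{y_1})^{\nu_2}\partial_{y_1}^{\nu_1}x$ evaluated at $(t-r,r)$ and bounds each term via (\ref{2-bd-xj}) with $|t-r|\le r+|t|$ and $r\lambda<\delta\lambda=x^{*}$; for (ii) it subtracts (\ref{deriv-xtr}) at the two points, invokes (\ref{2-bd-lip-x}) at $(t_i-r_i,r_i)$, and absorbs the factor $\max\{|t_1-t_2-r_1+r_2|,|r_1-r_2|\}\le 2\max\{|t_1-t_2|,|r_1-r_2|\}$ into the extra $2$ in $S_{(\nu_1,\nu_2+1)}$; for (iii) it applies Lemma~\ref{partial-devt} exactly as you describe, bounding the two inner blocks by $\varepsilon_{|\boldsymbol{l_j}|}(e^{\lambda|t|}|\xi|^\gamma)^{|\boldsymbol{l_j}|}$ and $\varepsilon_{|\boldsymbol{l_j}|}S_{\boldsymbol{l_j}}(e^{\lambda|t|}|\xi|^\gamma)^{|\boldsymbol{l_j}|}$ respectively and using $\sum_j|\boldsymbol{k_j}|\boldsymbol{l_j}=\boldsymbol{\nu}$ to collect the weight. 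Your caveat about the block bookkeeping in (iii) is apt---the paper also treats this passage from the $2n$-component scalar formula to the two-block form somewhat informally---but it is not a gap.
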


\begin{proof}
For each $\boldsymbol{\nu}=(\nu_1,\nu_2)\in\mathbb{N}_0^2$  with $1\leq|\boldsymbol{\nu}|\leq k$
and any $(t,r,\xi) \in \mathbb{R}\times\Omega\times V_{\gamma}$,
we note that
\begin{eqnarray*}
\frac{\partial^{\nu_1+\nu_2}}{\partial t^{\nu_1}\partial r^{\nu_2} }g(t,r,\xi)=
\frac{\partial^{\nu_2}}{\partial r^{\nu_2}}\left(\frac{\partial^{\nu_1}}{\partial t^{\nu_1}}x(t-r,r,\xi) \right)
=\sum_{j=0}^{\nu_2}\frac{(-1)^{j}v_2!}{j!(\nu_2-j)!}
\frac{\partial^{\nu_1+\nu_2}}{\partial y_1^{\nu_1+j}\partial y_2^{\nu_2-j}} x(t-r,r,\xi),
\end{eqnarray*}
which implies that
\begin{eqnarray*}
|\frac{\partial^{\nu_1+\nu_2}}{\partial t^{\nu_1}\partial r^{\nu_2}}g(t,r,\xi)|
\leq
\sum_{j=0}^{\nu_2}\frac{v_2!}{j!(\nu_2-j)!}\varepsilon_{|\boldsymbol{\nu}|}
\!\left(e^{\lambda(r+|t|)}|\xi|^{\gamma}\right)\!^{|\boldsymbol{\nu}|}
\leq
2^{\nu_2}e^{|\boldsymbol{\nu}|r\lambda}\varepsilon_{|\boldsymbol{\nu}|}\!\left(e^{\lambda|t|}|\xi|^{\gamma}\right)\!^{|\boldsymbol{\nu}|}.
\end{eqnarray*}
In view of $0<r<\delta$ and $\delta\lambda=x^{*}$, result (i) is proved.

For any $(t_1,r_1,\xi),\,(t_2,r_2,\xi)$ in $\mathbb{R}\times\Omega\times V_{\gamma}$ with $(t_1,r_1)\neq(t_2,r_2)$
and $|\boldsymbol{\nu}|=k$, by (\ref{deriv-xtr}) we have
\begin{eqnarray*}
\lefteqn{|\frac{\partial^{k}}{\partial t^{\nu_1}\partial r^{\nu_2} }g(t_1,r_1,\xi)
-\frac{\partial^{k}}{\partial t^{\nu_1}\partial r^{\nu_2} }g(t_2,r_2,\xi)|}\\
\!\!\!&\leq&\!\!\!
\sum_{j=0}^{\nu_2}\frac{\nu_2!}{j!(\nu_2-j)!}
|\frac{\partial^{k}}{\partial y_1^{\nu_1+j}\partial y_2^{\nu_2-j}}x(t_1-r_1,r_1,\xi)
-\frac{\partial^{k}}{\partial y_1^{\nu_1+j}\partial y_2^{\nu_2-j}}x(t_2-r_2,r_2,\xi)|\\
\!\!\!&\leq&\!\!\!
\sum_{j=0}^{\nu_2}\frac{\nu_2!}{j!(\nu_2-j)!}\varepsilon_{k+1}
\left(e^{\lambda(\delta+|t_{*}|)}|\xi|^{\gamma}\right)^{k+1}\max\{|t_1-t_2-r_1+r_2|,|r_1-r_2|\}\\
\!\!\!&\leq&\!\!\varepsilon_{k+1} S_{(\nu_1,\nu_2+1)}\left(e^{\lambda|t_{*}|}|\xi|^{\gamma}\right)^{k+1}
\max\{|t_1-t_2|,|r_1-r_2|\}.
\end{eqnarray*}
Then result (ii) is proved.

By Lemma \ref{partial-devt}, we obtain
\begin{eqnarray*}
\lefteqn{|\frac{\partial^{\nu_1+\nu_2}}{\partial t^{\nu_1}\partial r^{\nu_2} }f(x(t,r,\xi),x(t-r,r,\xi))|}\\
\!\!\!\!&\leq&\!\!\!\!\!\!\!
\sum_{1\leq |\boldsymbol{\omega}|\leq |\boldsymbol{\nu}|}\!
|\frac{\partial^{|\boldsymbol{\omega}|}}{\partial y_1^{\omega_1}\partial y_2^{\omega_2}}f|
\sum_{s=1}^{|\boldsymbol{\nu}|}\!\!\sum_{p_s(\boldsymbol{\nu},\boldsymbol{\omega})}(\boldsymbol{\nu}!)\\
\!\!\!\!& &\!\!\! \times
\prod_{j=1}^{s}\frac{1}{(\boldsymbol{k_j}!)(\boldsymbol{l_j}!)^{|\boldsymbol{k_j}|}}
|\frac{\partial^{|\boldsymbol{l_j}|}}{\partial t^{l_{j1}}\partial r^{l_{j2}} }x(t,r,\xi)|^{k_{j1}}
|\frac{\partial^{|\boldsymbol{l_j}|}}{\partial t^{l_{j1}}\partial r^{l_{j2}} }x(t-r,r,\xi)|^{k_{j2}}\\
\!\!\!&\leq&\!\!\!\!\! \sum_{1\leq |\boldsymbol{\omega}|\leq |\boldsymbol{\nu}|}M_{|\boldsymbol{\omega}|}
\sum_{s=1}^{|\boldsymbol{\nu}|}\sum_{p_s(\boldsymbol{\nu},\boldsymbol{\omega})}(\boldsymbol{\nu}!)
\prod_{j=1}^{s}\frac{1}{(\boldsymbol{k_j}!)(\boldsymbol{l_j}!)^{|\boldsymbol{k_j}|}} \left(\varepsilon_{|\boldsymbol{l_j}|}\left(e^{\lambda|t|}|\xi|^{\gamma}\right)^{|\boldsymbol{l_j}|}\right)^{k_{j1}}
\!\!\!\left(\varepsilon_{|\boldsymbol{l_j}|}S_{\boldsymbol{l_j}}\left(e^{\lambda|t|}|\xi|^{\gamma}\right)^{|\boldsymbol{l_j}|}\right)^{k_{j2}},
\end{eqnarray*}
together with $\sum_{j=1}^{s} |\boldsymbol{k_j}|\boldsymbol{l_j}=\boldsymbol{\nu}$, yields result (iii).
Therefore, the proof is complete.
\end{proof}

\begin{lm}\label{lm-GB5-2}
Let $x\in \mathcal{E}_{\gamma,\lambda}$ and $f$ satisfy the conditions in {\bf (H2)}.
Then for any $(t,r_1,\xi)$ and $(t,r_2,\xi)$ in $\mathbb{R}\times\Omega\times V_{\gamma}$ with $r_1\neq r_2$,
\begin{eqnarray*}
&&\!\!\!\!\!\!\!\!\!
|\frac{\partial^{k}}{\partial r^{k}} f(x(t,r_1,\xi),x(t-r_1,r_1,\xi))
-\!\frac{\partial^{k}}{\partial r^{k}} f(x(t,r_2,\xi),x(t-r_2,r_2,\xi))|
\leq T_{(0,k+1)}\!\left(e^{\lambda|t|}|\xi|^{\gamma}\right)^{k+1}\!\!|r_1-r_2|,
\end{eqnarray*}
where
\begin{eqnarray*}
T_{(0,k+1)}\!\!\!&=&\!\!\!
(k!)\!\sum_{1\leq|\boldsymbol{\omega}|\leq k}
M_{|\boldsymbol{\omega}|+1}\varepsilon_{1}S_{(0,1)}
\sum_{s=1}^{k}\sum_{p_s(\boldsymbol{\nu},\boldsymbol{\omega})}
\prod_{j=1}^{s}
\frac{\left(\varepsilon_{|\boldsymbol{l_j}|}S_{\boldsymbol{l_j}}\right)^{|\boldsymbol{k_j}|}}
{(\boldsymbol{k_j}!)(\boldsymbol{l_j}!)^{|\boldsymbol{k_j}|}}
+(k!)\!\sum_{1\leq|\boldsymbol{\omega}|\leq k}M_{|\boldsymbol{\omega}|}
\sum_{s=1}^{k}\sum_{p_s(\boldsymbol{\nu},\boldsymbol{\omega})}  \mathcal{J}_s,\\
\mathcal{K}_{j}\!\!\!&=&\!\!\!
\sum_{i=1}^{2n}\!k_{j,i} \varepsilon_{|\boldsymbol{l_j}|+1}S_{(l_{j1},l_{j2}+1)} \left(\varepsilon_{|\boldsymbol{l_j}|}S_{\boldsymbol{l_j}}\right)^{k_{j,i}-1}
\prod_{m=0}^{i-1} \left(\varepsilon_{|\boldsymbol{l_j}|}S_{\boldsymbol{l_j}}\right)^{k_{j,m}}
\prod_{m=i+1}^{2n+1}\left(\varepsilon_{|\boldsymbol{l_j}|}S_{\boldsymbol{l_j}}\right)^{k_{j,m}},\\
\mathcal{J}_{s}\!\!\!&=&\!\!\!
\sum_{j=1}^{s}\frac{\mathcal{K}_{j}}
{(\boldsymbol{k_j}!)(\boldsymbol{l_j}!)^{|\boldsymbol{k_j}|}}
\prod_{i=0}^{j-1}
\frac{\left(\varepsilon_{|\boldsymbol{l_i}|}S_{\boldsymbol{l_i}}\right)^{|\boldsymbol{k_i}|}}
{(\boldsymbol{k_i}!)(\boldsymbol{l_i}!)^{|\boldsymbol{k_i}|}}
\prod_{i=j+1}^{s+1}
\frac{\left(\varepsilon_{|\boldsymbol{l_i}|}S_{\boldsymbol{l_i}}\right)^{|\boldsymbol{k_i}|}}
{(\boldsymbol{k_i}!)(\boldsymbol{l_i}!)^{|\boldsymbol{k_i}|}},
\end{eqnarray*}
$\boldsymbol{\omega}=(\omega_1,...,\omega_{2n})$, $\boldsymbol{\nu}=(0,k)$,
$\boldsymbol{k_j}=(k_{j,1},...,k_{j,2n})$, $\boldsymbol{l_j}=(l_{j,1}, l_{j,2})$,
$\boldsymbol{k_0}=\boldsymbol{k_{s+1}}=(0,...,0)$ and $k_{j,0}=k_{j,2n+1}=0$.
\end{lm}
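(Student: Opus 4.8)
The plan is to run the argument of Lemma~\ref{lm-GB4-2} with the delay $r$ playing the role that the fiber variable $\xi$ plays there. Write $h(t,r):=f(x(t,r,\xi),x(t-r,r,\xi))$ and view it as the composition $g\circ({\bf g}_1,\dots,{\bf g}_{2n})$, where $g=f\colon\mathbb{R}^{2n}\to\mathbb{R}^{n}$ and the scalar functions ${\bf g}_i$ are the components of $x(t,r,\xi)$ for $1\le i\le n$ and of $x(t-r,r,\xi)$ for $n+1\le i\le 2n$. Applying the multivariate Fa\`a di Bruno formula of Lemma~\ref{partial-devt} with the multi-index $\boldsymbol{\nu}=(0,k)$ writes $\partial_r^{k}h$ as a finite sum of terms $g_{\boldsymbol{\omega}}\cdot(\boldsymbol{\nu}!)\prod_{j=1}^{s}({\bf g}_{\boldsymbol{l_j}})^{\boldsymbol{k_j}}/\big((\boldsymbol{k_j}!)(\boldsymbol{l_j}!)^{|\boldsymbol{k_j}|}\big)$; since the first slot of $\boldsymbol{\nu}$ vanishes, every $\boldsymbol{l_j}$ has the form $(0,l_{j2})$ and $\sum_j|\boldsymbol{k_j}||\boldsymbol{l_j}|=k$. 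I would then evaluate this expansion at $r=r_1$ and $r=r_2$, subtract, and apply the identity $P_1Q_1-P_2Q_2=(P_1-P_2)Q_1+P_2(Q_1-Q_2)$ to each summand (this is exactly the decomposition recorded in Appendix~B for Lemma~\ref{lm-GB4-2}), splitting the difference into a part (a) carrying the increment of $D^{|\boldsymbol{\omega}|}f$ between the two argument points times the Fa\`a di Bruno product at $r_1$, and a part (b) carrying $D^{|\boldsymbol{\omega}|}f$ at $r_2$ times the increment of the Fa\`a di Bruno product.

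For part (a) I would use $|D^{|\boldsymbol{\omega}|}f(p_1)-D^{|\boldsymbol{\omega}|}f(p_2)|\le M_{|\boldsymbol{\omega}|+1}|p_1-p_2|$ — via the mean value inequality together with $|D^{j}f|\le M_j$ when $|\boldsymbol{\omega}|<k$, and via the Lipschitz hypothesis on $D^kf$ in {\bf (H2)} when $|\boldsymbol{\omega}|=k$ — where $p_i=(x(t,r_i,\xi),x(t-r_i,r_i,\xi))$. The gap $|p_1-p_2|$ is dominated by $|x(t,r_1,\xi)-x(t,r_2,\xi)|\le\varepsilon_1(e^{\lambda|t|}|\xi|^{\gamma})|r_1-r_2|$, from (\ref{2-bd-xj}) and the mean value inequality, and by $|x(t-r_1,r_1,\xi)-x(t-r_2,r_2,\xi)|\le\varepsilon_1 S_{(0,1)}(e^{\lambda|t|}|\xi|^{\gamma})|r_1-r_2|$, from Lemma~\ref{lm-GB5-1}(i) with $\boldsymbol{\nu}=(0,1)$ and the mean value inequality (legitimate since $k\ge2$); as $S_{(0,1)}\ge1$, both are $\le\varepsilon_1 S_{(0,1)}(e^{\lambda|t|}|\xi|^{\gamma})|r_1-r_2|$. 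Bounding the product $\prod_j({\bf g}_{\boldsymbol{l_j}})^{\boldsymbol{k_j}}$ by Lemma~\ref{lm-GB5-1}(i) (using $S_{\boldsymbol{l_j}}\ge1$ to dominate the $x(t,\cdot,\xi)$-factors by the $x(t-\cdot,\cdot,\xi)$-bound) and invoking $\sum_j|\boldsymbol{k_j}||\boldsymbol{l_j}|=k$ so that the weights collapse to $(e^{\lambda|t|}|\xi|^{\gamma})^{k}$, I recover exactly the first sum defining $T_{(0,k+1)}$, with the missing weight power and the factor $|r_1-r_2|$ supplied by the $f$-increment.

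For part (b) I would telescope the increment of $\prod_{j=1}^{s}({\bf g}_{\boldsymbol{l_j}})^{\boldsymbol{k_j}}$ in three nested stages: over the $s$ factors indexed by $j$ (boundary products made trivial by $\boldsymbol{k_0}=\boldsymbol{k_{s+1}}=(0,\dots,0)$), then over the $2n$ scalar components inside a fixed factor (the indices $i=1,\dots,2n$ in $\mathcal{K}_j$, with guards $k_{j,0}=k_{j,2n+1}=0$), then over the exponent via $a^{m}-b^{m}=(a-b)\sum_{l=0}^{m-1}a^{l}b^{m-1-l}$ with $m=k_{j,i}$. Each surviving single increment $|\partial_r^{|\boldsymbol{l_j}|}{\bf g}_i(t,r_1,\xi)-\partial_r^{|\boldsymbol{l_j}|}{\bf g}_i(t,r_2,\xi)|$ is estimated by the mean value inequality together with Lemma~\ref{lm-GB5-1}(i) for $\boldsymbol{\nu}=(l_{j1},l_{j2}+1)$ when $|\boldsymbol{l_j}|<k$, and by Lemma~\ref{lm-GB5-1}(ii), respectively by (\ref{2-bd-lip-x}), when $|\boldsymbol{l_j}|=k$ (the point being that here one cannot differentiate once more, since $x$ is merely $C^{k}$); in both regimes the increment is $\le\varepsilon_{|\boldsymbol{l_j}|+1}S_{(l_{j1},l_{j2}+1)}(e^{\lambda|t|}|\xi|^{\gamma})^{|\boldsymbol{l_j}|+1}|r_1-r_2|$, while the remaining factors are bounded as in part (a). The accumulated combinatorial and weight factors then reproduce $\mathcal{K}_j$, then $\mathcal{J}_s$, and finally the second sum defining $T_{(0,k+1)}$, again with overall weight $(e^{\lambda|t|}|\xi|^{\gamma})^{k+1}$ and one factor $|r_1-r_2|$; adding (a) and (b) yields the stated bound.

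I expect the only genuine difficulty to be organizational: keeping the three layers of telescoping aligned so that the constants come out verbatim as $\mathcal{K}_j$ and $\mathcal{J}_s$, and making sure the borderline order-$k$ increments are treated with the H\"older/Lipschitz estimates of Lemma~\ref{lm-GB5-1}(ii) and (\ref{2-bd-lip-x}) rather than with a derivative of order $k+1$ (which does not exist), while still absorbing them under the uniform factor $\varepsilon_{|\boldsymbol{l_j}|+1}S_{(l_{j1},l_{j2}+1)}$ that appears in $\mathcal{K}_j$.
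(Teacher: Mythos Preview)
Your proposal is correct and follows essentially the same route as the paper: apply the multivariate Fa\`a di Bruno formula (Lemma~\ref{partial-devt}) at $r_1$ and $r_2$, split via $P_1Q_1-P_2Q_2=(P_1-P_2)Q_1+P_2(Q_1-Q_2)$, and telescope the product increment in the three nested layers you describe (this is precisely what Appendix~C records as $\widetilde{\Delta}_s$, $\widetilde{\Theta}_j$, $\widetilde{\Theta}_{i,j}$), then bound each piece with Lemma~\ref{lm-GB5-1} and the definition of $\mathcal{E}_{\gamma,\lambda}$. Your explicit flagging of the borderline $|\boldsymbol{l_j}|=k$ case---where one must invoke (\ref{2-bd-lip-x}) or Lemma~\ref{lm-GB5-1}(ii) rather than a nonexistent $(k{+}1)$-st derivative---is a point the paper leaves implicit, so your write-up is if anything slightly more careful there.
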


\begin{proof}
Let $x(t, r, \xi)=(x_1(t, r, \xi),...,x_n(t, r, \xi))^{T}\in \mathbb{R}^{n}$ and
${\bf g}(t,r,\xi)=(g_1(t,r,\xi),...,g_{2n}(t,r,\xi))
=(x_1(t,r,\xi),...,x_n(t,r,\xi),x_1(t-r,r,\xi),...,x_n(t-r,r,\xi))^{T}$.
To simplify the notations,
we also denote $f(r):=f(x(t,r,\xi),x(t-r,r,\xi))$ and ${\bf g}(r):={\bf g}(t,r,\xi)$.
By Appendix C, we obtain
\begin{equation}\label{k-lip}
\begin{split}
\lefteqn{ |\frac{\partial^{k}}{\partial r^{k}} f(x(t,r_1,\xi),x(t-r_1,r_1,\xi))
-\frac{\partial^{k}}{\partial r^{k}} f(x(t,r_2,\xi),x(t-r_2,r_2,\xi))|} \\
\leq&\,
\sum_{1\leq|\boldsymbol{\omega}|\leq k}\!\!
|f_{\boldsymbol{\omega}}(r_1)-f_{\boldsymbol{\omega}}(r_2)|
\sum_{s=1}^{k}\!\sum_{p_s(\boldsymbol{\nu},\boldsymbol{\omega})}
(k!)\prod_{j=1}^{s}
\frac{|({\bf g}_{\boldsymbol{l_j}}(r_1))^{\boldsymbol{k_j}}|}{(\boldsymbol{k_j}!)(\boldsymbol{l_j}!)^{|\boldsymbol{k_j}|}}
+\!\!\!\sum_{1\leq|\boldsymbol{\omega}|\leq k}\!\! |f_{\boldsymbol{\omega}}(r_2)|
\sum_{s=1}^{k}\!\sum_{p_s(\boldsymbol{\nu},\boldsymbol{\omega})}(k!) \Delta_{s},
\end{split}
\end{equation}
where
\begin{eqnarray*}
\Delta_{s}\!\!\!&:=&\!\!\!
\sum_{j=1}^{s}\left(\frac{\Theta_{j}}
{(\boldsymbol{k_j}!)(\boldsymbol{l_j}!)^{|\boldsymbol{k_j}|}}\right)
\prod_{i=0}^{j-1}
\frac{|({\bf g}_{\boldsymbol{l_i}}(r_2))^{\boldsymbol{k_i}}|}{(\boldsymbol{k_i}!)(\boldsymbol{l_i}!)^{|\boldsymbol{k_i}|}}
\prod_{i=j+1}^{s+1}
\frac{|({\bf g}_{\boldsymbol{l_i}}(r_1))^{\boldsymbol{k_i}}|}{(\boldsymbol{k_i}!)(\boldsymbol{l_i}!)^{|\boldsymbol{k_i}|}},\\
\Theta_{j}\!\!\!&:=&\!\!\!
\sum_{i=1}^{2n}\Theta_{i,j} \prod_{m=0}^{i-1} |g_{\boldsymbol{l_j}}^{(m)}(r_2)|^{k_{j,m}}
\prod_{m=i+1}^{2n+1}|g_{\boldsymbol{l_j}}^{(m)}(r_1)|^{k_{j,m}},\\
\Theta_{i,j}\!\!\!&:=&\!\!\!
|g^{(i)}_{\boldsymbol{l_j}}(r_1)-g^{(i)}_{\boldsymbol{l_j}}(r_2)|
\sum_{m=0}^{k_{j,i}-1}|g^{(i)}_{\boldsymbol{l_j}}(r_1)|^{m}\,|g^{(i)}_{\boldsymbol{l_j}}(r_2)|^{k_{j,i}-m-1},
\end{eqnarray*}
$\boldsymbol{k_0}=\boldsymbol{k_{s+1}}=(0,...,0)$ and $k_{j,0}=k_{j,2n+1}=0$.
We first note that for any $1\leq i\leq n$,
\begin{eqnarray*}
\Theta_{i,j}
\!\!\!&\leq&\!\!\!
\varepsilon_{|\boldsymbol{l_j}|+1} \left(e^{\lambda|t|}|\xi|^{\gamma}\right)^{|\boldsymbol{l_j}|+1}\!\!|r_1-r_2|
\sum_{m=0}^{k_{j,i}-1}
\left(\varepsilon_{|\boldsymbol{l_j}|} \left(e^{\lambda|t|}|\xi|^{\gamma}\right)^{|\boldsymbol{l_j}|}\right)^{k_{j,i}-1}\nonumber\\
\!\!\!&=&\!\!\!  k_{j,i}\, \varepsilon_{|\boldsymbol{l_j}|+1}
\varepsilon_{|\boldsymbol{l_j}|}^{k_{j,i}-1}
\left(e^{\lambda|t|}|\xi|^{\gamma}\right)^{k_{j,i}|\boldsymbol{l_j}|+1}|r_1-r_2|.
\end{eqnarray*}
Similarly,  we have
$\Theta_{i,j}\leq
k_{j,i} \varepsilon_{|\boldsymbol{l_j}|+1}S_{(l_{j1},l_{j2}+1)}
\left(\varepsilon_{|\boldsymbol{l_j}|}S_{\boldsymbol{l_j}}\right)^{k_{j,i}-1}
\left(e^{\lambda|t|}|\xi|^{\gamma}\right)^{k_{j,i}|\boldsymbol{l_j}|+1}|r_1-r_2|$ for $n+1\leq i \leq 2n$.
In view of $1\leq S_{(l_{j1},l_{j2}+1)}$
and $1\leq S_{\boldsymbol{l_j}}$, applying Lemma \ref{lm-GB5-1} (i), we obtain
\begin{eqnarray*}
\Theta_{j}
\!\!\!&\leq&\!\!\!
\sum_{i=1}^{2n}\!k_{j,i} \varepsilon_{|\boldsymbol{l_j}|+1}S_{(l_{j1},l_{j2}+1)}
\left(\varepsilon_{|\boldsymbol{l_j}|}S_{\boldsymbol{l_j}}\right)^{k_{j,i}-1}
\!\left(e^{\lambda|t|}|\xi|^{\gamma}\right)^{k_{j,i}|\boldsymbol{l_j}|+1}
\!\prod_{m=0}^{i-1} \left(\varepsilon_{|\boldsymbol{l_j}|}S_{\boldsymbol{l_j}}\left(e^{\lambda|t|}|\xi|^{\gamma}\right)^{|\boldsymbol{l_j}|}\right)^{k_{j,m}}
\nonumber\\
\!\!\!& &\!\!\!
\times \prod_{m=i+1}^{2n+1}\left(\varepsilon_{|\boldsymbol{l_j}|}S_{\boldsymbol{l_j}}
\left(e^{\lambda|t|}|\xi|^{\gamma}\right)^{|\boldsymbol{l_j}|}\right)^{k_{j,m}}
|r_1-r_2|
\nonumber\\
\!\!\!&=&\!\!\!
\mathcal{K}_{j}\left(e^{\lambda|t|}|\xi|^{\gamma}\right)^{|\boldsymbol{l_j}||\boldsymbol{k_j}|+1}|r_1-r_2|.\nonumber\\
\end{eqnarray*}
Applying Lemma \ref{lm-GB5-1} (i) again, we have
\begin{eqnarray}\label{Delta-est}
\begin{split}
\Delta_{s}
\leq&\,
\sum_{j=1}^{s}\left(\frac{\mathcal{K}_{j}\left(e^{\lambda|t|}|\xi|^{\gamma}\right)^{|\boldsymbol{l_j}||\boldsymbol{k_j}|+1}|r_1-r_2|}
{(\boldsymbol{k_j}!)(\boldsymbol{l_j}!)^{|\boldsymbol{k_j}|}}\right)
\prod_{i=0}^{j-1}
\frac{\left(\varepsilon_{|\boldsymbol{l_i}|}S_{\boldsymbol{l_i}}
\left(e^{\lambda|t|}|\xi|^{\gamma}\right)^{|\boldsymbol{l_i}|}\right)^{|\boldsymbol{k_i}|}}
{(\boldsymbol{k_i}!)(\boldsymbol{l_i}!)^{|\boldsymbol{k_i}|}}\\
 &\, \times
\prod_{i=j+1}^{s+1}
\frac{\left(\varepsilon_{|\boldsymbol{l_{i}}|}S_{\boldsymbol{l_i}}
\left(e^{\lambda|t|}|\xi|^{\gamma}\right)^{|\boldsymbol{l_i}|}\right)^{|\boldsymbol{k_i}|}}
{(\boldsymbol{k_i}!)(\boldsymbol{l_i}!)^{|\boldsymbol{k_i}|}}=
\mathcal{J}_s\left(e^{\lambda|t|}|\xi|^{\gamma}\right)^{k+1}|r_1-r_2|.
\end{split}
\end{eqnarray}
Finally, combining (\ref{k-lip}), (\ref{Delta-est}) and Lemma \ref{lm-GB5-1}, we get that
\begin{eqnarray*}
\lefteqn{ |\frac{\partial^{k}}{\partial r^{k}} f(x(t,r_1,\xi),x(t-r_1,r_1,\xi))
-\frac{\partial^{k}}{\partial r^{k}} f(x(t,r_2,\xi),x(t-r_2,r_2,\xi))|}\\
\!\!\!&\leq&\!\!\!\!\!\!\!
\sum_{1\leq|\boldsymbol{\omega}|\leq k}
M_{|\boldsymbol{\omega}|+1}
\max\{|x(t,r_1,\xi)-x(t,r_2,\xi)|,|x(t-r_1,r_1,\xi)-x(t-r_2,r_2,\xi)|\}
\sum_{s=1}^{k}\sum_{p_s(\boldsymbol{\nu},\boldsymbol{\omega})}(k!)\\
\!\!\!&&\!\!\!\!\!\!\!
\times \prod_{j=1}^{s}
\frac{\left(\varepsilon_{|\boldsymbol{l_j}|}S_{\boldsymbol{l_j}}\left(e^{\lambda|t|}|\xi|^{\gamma}\right)^{|\boldsymbol{l_j}|}\right)^{|\boldsymbol{k_j}|}}
{(\boldsymbol{k_j}!)(\boldsymbol{l_j}!)^{|\boldsymbol{k_j}|}}
+\sum_{1\leq|\boldsymbol{\omega}|\leq k}M_{|\boldsymbol{\omega}|}
\sum_{s=1}^{k}\sum_{p_s(\boldsymbol{\nu},\boldsymbol{\omega})} (k!) \mathcal{J}_s\left(e^{\lambda|t|}|\xi|^{\gamma}\right)^{k+1}|r_1-r_2|\\
\!\!\!&\leq&\!\!\!\!
T_{(0,k+1)}
\left(e^{\lambda|t|}|\xi|^{\gamma}\right)^{k+1}|r_1-r_2|.
\end{eqnarray*}
Therefore, the proof is complete.
\end{proof}

\begin{lm}\label{lm-GB5-3}
Assume that {\bf (H2)} holds. Then
for any $x\in \mathcal{E}_{\gamma,\lambda}$,
and any $(t,r,\xi)$, $(t_1,r_1,\xi)$ and $(t_2,r_2,\xi)$ in $\mathbb{R}\times\Omega\times V_{\gamma}$
with $(t_1,r_1)\neq (t_2,r_2)$,
the following assertions holds:
\begin{itemize}
\item[(i)]
$|\mathcal{F}(x)(t,r,\xi)|
\leq \left(d+\varepsilon_0(Mx^{*}e^{x^{*}}+M_1\delta e^{x^{*}})/x^{*}+|f(0,0)|\delta /(ex^{*})\right)e^{\lambda|t|}|\xi|^{\gamma}.$

\item[(ii)]
$|\frac{\partial}{\partial t}\mathcal{F}(x)(t,r,\xi)|\leq
\left(Me^{x^{*}}\varepsilon_1+M_1e^{x^{*}}\varepsilon_0+|f(0,0)|\right)e^{\lambda|t|}|\xi|^{\gamma}$.

\item[(iii)]
For $\boldsymbol{\nu}=(\nu_1,\nu_2)$ with $\nu_1=0$ and $\nu_2=|\boldsymbol{\nu}|\leq k$,
we have
\begin{eqnarray*}
|\frac{\partial^{|\boldsymbol{\nu}|}}{\partial r^{|\boldsymbol{\nu}|}}\mathcal{F}(x)(t,r,\xi)|
\leq \left(MS_{(0,|\boldsymbol{\nu}|)}\varepsilon_{|\boldsymbol{\nu}|}+T_{(0,|\boldsymbol{\nu}|)}\delta /(|\boldsymbol{\nu}|x^{*})\right)
(e^{\lambda|t|}|\xi|^{\gamma})^{|\boldsymbol{\nu}|}.
\end{eqnarray*}

\item[(iv)]
For $\boldsymbol{\nu}=(\nu_1,\nu_2)$ with $1\leq\nu_i\leq |\boldsymbol{\nu}|-1$ and $2\leq|\boldsymbol{\nu}|\leq k$,
we have
\begin{eqnarray*}
|\frac{\partial^{|\boldsymbol{\nu}|}}{\partial t^{\nu_1}\partial r^{\nu_2}}\mathcal{F}(x)(t,r,\xi)|
\leq \left(MS_{(\nu_1,\nu_2)}\varepsilon_{|\boldsymbol{\nu}|}+T_{(\nu_1-1,\nu_2)}\right)
(e^{\lambda|t|}|\xi|^{\gamma})^{|\boldsymbol{\nu}|}.
\end{eqnarray*}

\item[(v)]
For $\boldsymbol{\nu}=(\nu_1,\nu_2)$ with $\nu_1=0$ and $\nu_2=k$,
we have
\begin{eqnarray*}
\lefteqn{|\frac{\partial^{k}}{\partial r^{k}}\mathcal{F}(x)(t_1,r_1,\xi)
-\frac{\partial^{k}}{\partial r^{k}}\mathcal{F}(x)(t_2,r_2,\xi)|}\\
\!\!\!&&\!\!\!\!\!\!\!\leq \left(MS_{(0,k+1)}\varepsilon_{k+1}+T_{(0,k+1)}\delta /((k+1)x^{*})+T_{(0,k)}\right)
\left(e^{\lambda|t_*|}|\xi|^{\gamma}\right)^{k+1}|(t_1,r_1)-(t_2,r_2)|.
\end{eqnarray*}
where $|t_*|=\max\{|t_1|,|t_2|\}$.
\item[(vi)]
For $\boldsymbol{\nu}=(\nu_1,\nu_2)$ with $1\leq \nu_1\leq k$ and $\nu_1+\nu_2=k$,
we have
\begin{eqnarray*}
\lefteqn{|\frac{\partial^{k}}{\partial t^{\nu_1}\partial r^{\nu_2}}\mathcal{F}(x)(t_1,r_1,\xi)
-\frac{\partial^{k}}{\partial t^{\nu_1}\partial r^{\nu_2}}\mathcal{F}(x)(t_2,r_2,\xi)|}\\
\!\!\!&&\!\!\!\!\!\!\!\leq
\left(MS_{(\nu_1,\nu_2+1)}\varepsilon_{k+1}+T_{(\nu_1,\nu_2)}+T_{(\nu_1-1,\nu_2+1)}\right)
\left(e^{\lambda|t_*|}|\xi|^{\gamma}\right)^{k+1}|(t_1,r_1)-(t_2,r_2)|.
\end{eqnarray*}
where $|t_*|=\max\{|t_1|,|t_2|\}$.
\end{itemize}
\end{lm}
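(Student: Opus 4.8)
The plan is to differentiate the defining formula (\ref{map-2}) for $\mathcal{F}(x)$ term by term and to bound each piece using Lemmas \ref{lm-GB5-1} and \ref{lm-GB5-2}. Since $x\in\mathcal{E}_{\gamma,\lambda}$ is $C^k$ in $(t,r)$ and $f$ is $C^k$, all the differentiations under the integral sign and all uses of the Fundamental Theorem of Calculus below are legitimate. Now $\xi$ dies under any derivative of order $\ge1$ in $(t,r)$, the term $A\,x(t-r,r,\xi)$ contributes $A\,\partial_t^{\nu_1}\partial_r^{\nu_2}g(t,r,\xi)$ with $g$ as in Lemma \ref{lm-GB5-1}, and the decisive dichotomy for the integral term is whether $\nu_1=0$ or $\nu_1\ge1$. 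If $\nu_1=0$ one differentiates under the integral, $\partial_r^{\nu_2}\int_0^t f(x(s,r,\xi),x(s-r,r,\xi))\,ds=\int_0^t\partial_r^{\nu_2}f(x(s,r,\xi),x(s-r,r,\xi))\,ds$, bounds the integrand by Lemma \ref{lm-GB5-1}(iii) (with $\boldsymbol{\nu}=(0,\nu_2)$) or, for the Lipschitz estimates, by Lemma \ref{lm-GB5-2}, and integrates; the elementary bound $|\int_0^t e^{m\lambda|s|}\,ds|\le e^{m\lambda|t|}/(m\lambda)$ together with $1/\lambda=\delta/x^{*}$ turns the integration into the factor $\delta/(|\boldsymbol{\nu}|x^{*})$. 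If $\nu_1\ge1$, the Fundamental Theorem of Calculus gives $\partial_t^{\nu_1}\partial_r^{\nu_2}\int_0^t f(\cdots)\,ds=\partial_t^{\nu_1-1}\partial_r^{\nu_2}\big[f(x(t,r,\xi),x(t-r,r,\xi))\big]$, a derivative of the composite function $f(x(t,r,\xi),x(t-r,r,\xi))$ of total order $|\boldsymbol{\nu}|-1$, again controlled by Lemma \ref{lm-GB5-1}(iii). Everywhere we use $0<r<\delta$ and $\delta\lambda=x^{*}$, so $e^{r\lambda}\le e^{x^{*}}$; we use $e^{\lambda|t|}|\xi|^{\gamma}\ge1$ (valid since $|\xi|^{\gamma}\ge1$ on both $V_0$ and $V_1$, as $d>1$) to bound the contribution of $\xi$ by $d\,e^{\lambda|t|}|\xi|^{\gamma}$, to absorb the constant terms involving $f(0,0)$, and to inflate lower powers of $e^{\lambda|t|}|\xi|^{\gamma}$ to the power required by the statement; and we use $|t|\le e^{\lambda|t|}/(e\lambda)$ to handle the term $\int_0^t f(0,0)\,ds$ in (i).

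With this in place, parts (i)--(iv) are direct computations. Part (i) uses $|A|=M$, the growth bound (\ref{2-norm-x}), the Lipschitz estimate $|f(y,z)|\le|f(0,0)|+M_1\max\{|y|,|z|\}$, and the two elementary inequalities above. Part (ii) is the identity $\partial_t\mathcal{F}(x)(t,r,\xi)=A\,\partial_t g(t,r,\xi)+f(x(t,r,\xi),x(t-r,r,\xi))$, whose first summand is bounded by Lemma \ref{lm-GB5-1}(i) (with $S_{(1,0)}=e^{x^{*}}$) and whose second is bounded by the Lipschitz bound on $f$. Parts (iii) and (iv) combine the $A$-term bound from Lemma \ref{lm-GB5-1}(i) (giving $MS_{(0,|\boldsymbol{\nu}|)}\varepsilon_{|\boldsymbol{\nu}|}$, resp.\ $MS_{(\nu_1,\nu_2)}\varepsilon_{|\boldsymbol{\nu}|}$) with the integral-term bound: for (iii), where $\nu_1=0$, the under-the-integral estimate produces $T_{(0,|\boldsymbol{\nu}|)}\,\delta/(|\boldsymbol{\nu}|x^{*})$; for (iv), where $\nu_1\ge1$, the Fundamental-Theorem reduction gives $T_{(\nu_1-1,\nu_2)}$ from Lemma \ref{lm-GB5-1}(iii) applied to $(\nu_1-1,\nu_2)$, after inflating the exponent from $|\boldsymbol{\nu}|-1$ to $|\boldsymbol{\nu}|$.

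The Lipschitz estimates (v) and (vi) need a little more care. In each, the difference of the two evaluations of the top-order derivative splits into the $A$-term difference, controlled by Lemma \ref{lm-GB5-1}(ii) (which yields $MS_{(\nu_1,\nu_2+1)}\varepsilon_{k+1}$), plus the integral contribution. For (v), with $\nu_1=0$, set $h_r(s):=\partial_r^k f(x(s,r,\xi),x(s-r,r,\xi))$ and split $\int_0^{t_1}h_{r_1}(s)\,ds-\int_0^{t_2}h_{r_2}(s)\,ds=\int_0^{t_1}\big(h_{r_1}(s)-h_{r_2}(s)\big)\,ds+\int_{t_2}^{t_1}h_{r_2}(s)\,ds$: the first piece is bounded by Lemma \ref{lm-GB5-2} (yielding $T_{(0,k+1)}\,\delta/((k+1)x^{*})$ after integrating $e^{(k+1)\lambda|s|}$ and using $|t_1|\le|t_*|$), the second by Lemma \ref{lm-GB5-1}(iii) with $\boldsymbol{\nu}=(0,k)$ together with $|\int_{t_2}^{t_1}e^{k\lambda|s|}\,ds|\le e^{k\lambda|t_*|}|t_1-t_2|$ (yielding $T_{(0,k)}$), and the exponents are normalized to $k+1$ via $e^{\lambda|t_*|}|\xi|^{\gamma}\ge1$. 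For (vi), with $\nu_1\ge1$, the Fundamental-Theorem identity reduces the integral contribution to $\partial_t^{\nu_1-1}\partial_r^{\nu_2}F$ with $F(t,r)=f(x(t,r,\xi),x(t-r,r,\xi))$, and one estimates $|\partial_t^{\nu_1-1}\partial_r^{\nu_2}F(t_1,r_1)-\partial_t^{\nu_1-1}\partial_r^{\nu_2}F(t_2,r_2)|$ by travelling along the polygonal path $(t_2,r_2)\to(t_1,r_2)\to(t_1,r_1)$, i.e.\ by writing this difference as $\int_{t_2}^{t_1}\partial_t^{\nu_1}\partial_r^{\nu_2}F(t,r_2)\,dt+\int_{r_2}^{r_1}\partial_t^{\nu_1-1}\partial_r^{\nu_2+1}F(t_1,r)\,dr$ and bounding the two integrands by Lemma \ref{lm-GB5-1}(iii) with $(\nu_1,\nu_2)$ and $(\nu_1-1,\nu_2+1)$ respectively; this produces the constants $T_{(\nu_1,\nu_2)}$ and $T_{(\nu_1-1,\nu_2+1)}$, again with exponents normalized to $k+1$.

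The bookkeeping in (v) and (vi) should be the main obstacle. In (v) one must simultaneously cope with the mismatch of the upper limits $t_1\ne t_2$ and of the parameter $r_1\ne r_2$ in the two integrals, which forces exactly the two-term split above and the combined use of Lemmas \ref{lm-GB5-2} and \ref{lm-GB5-1}(iii). In (vi) the subtle point is that a Lipschitz bound on the order-$(k-1)$ derivative $\partial_t^{\nu_1-1}\partial_r^{\nu_2}F$ must be extracted from the pointwise bounds on its order-$k$ derivatives, which is the role of the path-integration step; the delicate part is to replace every $e^{\lambda|t|}$ appearing in an integral over a $t$-interval with endpoints $t_1,t_2$ by $e^{\lambda|t_*|}$ with $|t_*|=\max\{|t_1|,|t_2|\}$. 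Once these splits are fixed, the rest is a mechanical substitution of the constants $S_{\boldsymbol{\nu}}$, $T_{\boldsymbol{\nu}}$ and $T_{(0,k+1)}$ from Lemmas \ref{lm-GB5-1} and \ref{lm-GB5-2} and collection of terms.
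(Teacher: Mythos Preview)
Your proposal is correct and follows essentially the same approach as the paper's proof: both differentiate (\ref{map-2}) term by term, bound the $A$-term via Lemma~\ref{lm-GB5-1}(i)--(ii), and handle the integral term according to whether $\nu_1=0$ (differentiate under the integral, then Lemma~\ref{lm-GB5-1}(iii) or Lemma~\ref{lm-GB5-2}) or $\nu_1\ge1$ (Fundamental Theorem of Calculus, then Lemma~\ref{lm-GB5-1}(iii)); the two-term split in (v) and the polygonal-path telescoping in (vi) match the paper's decomposition up to the order of the pieces and the choice of intermediate point. The only cosmetic difference is that the paper treats the cases $\gamma=0$ and $\gamma=1$ separately in (i)--(ii), whereas you unify them through the observation $e^{\lambda|t|}|\xi|^{\gamma}\ge1$ and $|\xi|\le d\,e^{\lambda|t|}|\xi|^{\gamma}$; this is equivalent.
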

\begin{proof}
For any $x\in \mathcal{E}_{\gamma,\lambda}$ and $(t,r,\xi) \in \mathbb{R}\times\Omega\times V_{\gamma}$, we have
\begin{equation}
\begin{split}
\!|\mathcal{F}(x)(t,r,\xi)| \leq&\,
|\xi|\!+\!|Ax(t-r,r,\xi)|\!+\!|\!\int_0^{t}\!|f(x(s,r,\xi),x(s-r,r,\xi))\!-\!f(0,0)|ds|\!
\\
&+\!|\!\int_0^{t}\!|f(0,0)|ds|\\
\leq&\,
|\xi|+M\varepsilon_{0}e^{\lambda(r+|t|)}|\xi|^{\gamma}
+|\int_0^{t}M_1 \varepsilon_{0}e^{\lambda(r+|s|)}|\xi|^{\gamma} ds|+|f(0,0)t|\\
\leq&\,
|\xi|+M\varepsilon_{0}e^{\lambda(r+|t|)}|\xi|^{\gamma}
+M_1 \varepsilon_{0}e^{\lambda(r+|t|)}|\xi|^{\gamma}/\lambda +|f(0,0)|e^{\lambda|t|}/(\lambda e)\label{Fx-est},
\end{split}
\end{equation}
where the last inequality follows from the fact that $\max_{t\in \mathbb{R}}|t|e^{-\lambda|t|}=1/(\lambda e)$.
As $\gamma=0$, by (\ref{Fx-est}) we obtain
\begin{eqnarray*}
|\mathcal{F}(x)(t,r,\xi)|e^{-\lambda|t|}
\leq d+(Me^{r\lambda}+M_1e^{r\lambda}/\lambda) \varepsilon_{0}+|f(0,0)|/(\lambda e),
\end{eqnarray*}
together with $0<r<\delta$ and $\delta \lambda=x^{*}$,
yields result (i) is true in the case $\gamma=0$.
As $\gamma=1$, by (\ref{Fx-est}) we have
\begin{eqnarray*}
|\mathcal{F}(x)(t,r,\xi)|(e^{\lambda|t|}|\xi|)^{-1}
\leq 1+(Me^{r\lambda}+M_1e^{r\lambda}/\lambda) \varepsilon_{0}+|f(0,0)|/(\lambda e|\xi|).
\end{eqnarray*}
Note that $0<r<\delta$, $\delta \lambda=x^{*}$ and $|\xi|>d>1$ for $\gamma=1$.
This implies that result {\bf (i)} is also true in the case $\gamma=1$.

Note that $x\in \mathcal{E}_{\gamma,\lambda}$ and $f$ satisfies {\bf (H2)}. We obtain
\begin{eqnarray*}
|\frac{\partial}{\partial t}\mathcal{F}(x)(t,r,\xi)|
\!\!\!&=&\!\!\!|A\frac{\partial}{\partial t}x(t-r,r,\xi)+f(x(t,r,\xi),x(t-r,r,\xi))|\\
\!\!\!&\leq&\!\!\!M\varepsilon_1e^{\lambda(r+|t|)}|\xi|^{\gamma}+|f(x(t,r,\xi),x(t-r,r,\xi))-f(0,0)|+|f(0,0)|\\
\!\!\!&\leq&\!\!\!M\varepsilon_1e^{\lambda(r+|t|)}|\xi|^{\gamma}+M_1\varepsilon_0e^{\lambda(r+|t|)}|\xi|^{\gamma}+|f(0,0)|\\
\!\!\!&\leq&\!\!\!\left(Me^{r\lambda}\varepsilon_1+M_1e^{r\lambda}\varepsilon_0+|f(0,0)|\right)e^{\lambda|t|}|\xi|^{\gamma},
\end{eqnarray*}
where the last inequality follows from the fact that $|\xi|>d>1$ in the case $\gamma=1$.
Then by the conditions $0<r<\delta $ and $\delta \lambda=x^{*}$, result (ii) is established.

For $\boldsymbol{\nu}=(\nu_1,\nu_2)$ with $\nu_1=0$ and $\nu_2=|\boldsymbol{\nu}|\leq k$,
using the Leibniz's Rule and Lemma \ref{lm-GB5-1}, we have
\begin{eqnarray*}
|\frac{\partial^{|\boldsymbol{\nu}|}}{\partial r^{|\boldsymbol{\nu}|}}\mathcal{F}(x)(t,r,\xi)|
\!\!\!&=&\!\!\!|A\frac{\partial^{|\boldsymbol{\nu}|}}{\partial r^{|\boldsymbol{\nu}|}}x(t-r,r,\xi)
+\int_{0}^{t}\frac{\partial^{|\boldsymbol{\nu}|}}{\partial r^{|\boldsymbol{\nu}|}}f(x(s,r,\xi),x(s-r,r,\xi))ds|\\
\!\!\!&\leq&\!\!\!
MS_{(0,|\boldsymbol{\nu}|)}\varepsilon_{|\boldsymbol{\nu}|}(e^{\lambda|t|}|\xi|^{\gamma})^{|\boldsymbol{\nu}|}
+|\int_{0}^{t}T_{(0,|\boldsymbol{\nu}|)}(e^{\lambda|s|}|\xi|^{\gamma})^{|\boldsymbol{\nu}|}ds|\\
\!\!\!&\leq&\!\!\!
\left(MS_{(0,|\boldsymbol{\nu}|)}\varepsilon_{|\boldsymbol{\nu}|}+T_{(0,|\boldsymbol{\nu}|)}/(|\boldsymbol{\nu}|\lambda)\right)
(e^{\lambda|t|}|\xi|^{\gamma})^{|\boldsymbol{\nu}|}.
\end{eqnarray*}
Together with $\delta \lambda=x^{*}$, result (iii) is true.

For $\boldsymbol{\nu}=(\nu_1,\nu_2)$ with $1\leq\nu_i\leq |\boldsymbol{\nu}|-1$ and $2\leq|\boldsymbol{\nu}|\leq k$,
using the Leibniz's Rule and Lemma \ref{lm-GB5-1} again, we have
\begin{eqnarray*}
|\frac{\partial^{|\boldsymbol{\nu}|}}{\partial t^{\nu_1}\partial r^{\nu_2}}\mathcal{F}(x)(t,r,\xi)|
\!\!\!&=&\!\!\!
|A\frac{\partial^{|\boldsymbol{\nu}|}}{\partial t^{\nu_1}\partial r^{\nu_2}}x(t-r,r,\xi)
+\frac{\partial^{|\boldsymbol{\nu}|-1}}{\partial t^{\nu_1-1}\partial r^{\nu_2}}f(x(t,r,\xi),x(t-r,r,\xi))|\\
\!\!\!&\leq&\!\!\!
MS_{(\nu_1,\nu_2)}\varepsilon_{|\boldsymbol{\nu}|}(e^{\lambda|t|}|\xi|^{\gamma})^{|\boldsymbol{\nu}|}+T_{(\nu_1-1,\nu_2)}
(e^{\lambda|t|}|\xi|^{\gamma})^{|\boldsymbol{\nu}|}.
\end{eqnarray*}
So result (iv) holds.

For $\boldsymbol{\nu}=(\nu_1,\nu_2)$ with $\nu_1=0$ and $\nu_2=k$,
applying Lemma \ref{lm-GB5-1} and Lemma \ref{lm-GB5-2}, we have
\begin{eqnarray*}
\lefteqn{|\frac{\partial^{k}}{\partial r^{k}}\mathcal{F}(x)(t_1,r_1,\xi)
-\frac{\partial^{k}}{\partial r^{k}}\mathcal{F}(x)(t_2,r_2,\xi)|}\nonumber\\
\!\!\!\!&\leq &\!\!\!\!
|A||\frac{\partial^{k}}{\partial r^{k}}x(t_1-r_1,r_1,\xi)-\frac{\partial^{k}}{\partial r^{k}}x(t_2-r_2,r_2,\xi)|
+|\int^{t_2}_{t_1} \frac{\partial^{k}}{\partial r^{k}} f(x(s,r_2,\xi),x(s-r_2,r_2,\xi))ds|\\
\!\!\!\!&&\!\!\!\! +|\int^{t_1}_{0} \left( \frac{\partial^{k}}{\partial r^{k}} f(x(s,r_1,\xi),x(s-r_1,r_1,\xi))
- \frac{\partial^{k}}{\partial r^{k}} f(x(s,r_2,\xi),x(s-r_2,r_2,\xi))\right)ds|\\
\!\!\!\!&\leq &\!\!\!\!
MS_{(0,k+1)}\varepsilon_{k+1}\left(e^{\lambda|t_*|}|\xi|^{\gamma}\right)^{k+1}\max\{|t_1-t_2|,|r_1-r_2|\}
+T_{(0,k)}\left(e^{\lambda|t_*|}|\xi|^{\gamma}\right)^{k+1}|t_1-t_2| \\
\!\!\!\!& &\!\!\!\!
+|\int^{t_1}_{0} T_{(0,k+1)}\left(e^{\lambda|s|}|\xi|^{\gamma}\right)^{k+1}|r_1-r_2|ds|\\
\!\!\!\!&\leq &\!\!\!\!
\left(MS_{(0,k+1)}\varepsilon_{k+1}+T_{(0,k+1)}/((k+1)\lambda)+T_{(0,k)}\right)
\left(e^{\lambda|t_*|}|\xi|^{\gamma}\right)^{k+1}\max\{|t_1-t_2|,|r_1-r_2|\},
\end{eqnarray*}
which implies that result (v) is true.

In the end, using Lemma \ref{lm-GB5-1},
we have for $\boldsymbol{\nu}=(\nu_1,\nu_2)$ with $1\leq \nu_1\leq k$ and $\nu_1+\nu_2=k$,
\begin{eqnarray*}
\lefteqn{|\frac{\partial^{k}}{\partial t^{\nu_1}\partial r^{\nu_2}}\mathcal{F}(x)(t_1,r_1,\xi)
-\frac{\partial^{k}}{\partial t^{\nu_1}\partial r^{\nu_2}}\mathcal{F}(x)(t_2,r_2,\xi)|}\\
\!\!\!\!&\leq &\!\!\!\!
|A||\frac{\partial^{k}}{\partial t^{\nu_1}\partial r^{\nu_2}}x(t_1-r_1,r_1,\xi)
-\frac{\partial^{k}}{\partial t^{\nu_1}\partial r^{\nu_2}}x(t_2-r_2,r_2,\xi)|\\
\!\!\!\!& &\!\!\!\!
+|\frac{\partial^{k-1}}{\partial t^{\nu_1-1}\partial r^{\nu_2}} f(x(t_1,r_1,\xi),x(t_1-r_1,r_1,\xi))
- \frac{\partial^{k-1}}{\partial t^{\nu_1-1}\partial r^{\nu_2}} f(x(t_2,r_1,\xi),x(t_2-r_1,r_1,\xi))|\\
\!\!\!\!& &\!\!\!\!
+|\frac{\partial^{k-1}}{\partial t^{\nu_1-1}\partial r^{\nu_2}} f(x(t_2,r_1,\xi),x(t_2-r_1,r_1,\xi))
- \frac{\partial^{k-1}}{\partial t^{\nu_1-1}\partial r^{\nu_2}} f(x(t_2,r_2,\xi),x(t_2-r_2,r_2,\xi))|\\
\!\!\!\!&\leq&\!\!\!\!
\left(MS_{(\nu_1,\nu_2+1)}\varepsilon_{k+1}+T_{(\nu_1,\nu_2)}+T_{(\nu_1-1,\nu_2+1)}\right)
\left(e^{\lambda|t_*|}|\xi|^{\gamma}\right)^{k+1}\max\{|t_1-t_2|,|r_1-r_2|\}.
\end{eqnarray*}
Therefore, result (vi) holds. Then the proof of Lemma \ref{lm-GB5-3} is complete.
\end{proof}

For any constant $d_0>0$, we define
$$\mathcal{E}^{\lambda}_{d_0}:=\{x\in C(\mathbb{R}\times\Omega\times V^{0}_{d_0}, \mathbb{R}^n):
\sup_{(t,r,\xi)\in\mathbb{R}\times\Omega\times V^{0}_{d_0}} |x(t,r,\xi)|e^{-\lambda|t|}<+\infty\}$$
equipped with the norm
$$|x|_{\mathcal{E}^{\lambda}_{d_0}}=
\sup_{(t,r,\xi)\in\mathbb{R}\times\Omega\times V^{0}_{d_0}} |x(t,r,\xi)|e^{-\lambda|t|}.$$
Similarly to Lemma \ref{lm-4-complete}, we can prove $(\mathcal{E}^{\lambda}_{d_0}, |\cdot|_{\mathcal{E}^{\lambda}_{d_0}})$ is a Banach space.
\begin{lm}\label{lm-GB5-4}
Assume that {\bf (H2)} holds and $0<r<\delta\leq r_0$.
Then $\mathcal{F}$ in (\ref{map-2}) has a unique fixed point in $\mathcal{E}^{\lambda}_{d_0}$.
\end{lm}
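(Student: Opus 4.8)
The plan is to prove that the operator $\mathcal{F}$ defined by (\ref{map-2}) is a contraction on the Banach space $(\mathcal{E}^{\lambda}_{d_0},|\cdot|_{\mathcal{E}^{\lambda}_{d_0}})$ and then to invoke the Contraction Mapping Principle; this is in essence the $k=0$ analogue of Steps (ii)--(iv) in the proof of Theorem \ref{thm-1} (compare Remark \ref{rk-unique}). Note that, in contrast with the operator $\mathcal{Q}$ in the proof of Theorem \ref{thm-1-2}, every integral occurring in (\ref{map-2}) is taken over the bounded interval $[0,t]$, so no tail-convergence issue arises and $\mathcal{F}(x)$ is automatically continuous on $\mathbb{R}\times\Omega\times V^{0}_{d_0}$ whenever $x$ is.

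First I would check that $\mathcal{F}$ maps $\mathcal{E}^{\lambda}_{d_0}$ into itself. For $x\in\mathcal{E}^{\lambda}_{d_0}$ and $(t,r,\xi)\in\mathbb{R}\times\Omega\times V^{0}_{d_0}$, I split
\[
|\mathcal{F}(x)(t,r,\xi)|\le |\xi|+M\,|x(t-r,r,\xi)|+\Bigl|\int_0^t|f(x(s,r,\xi),x(s-r,r,\xi))-f(0,0)|\,ds\Bigr|+|f(0,0)|\,|t|.
\]
Using $|\xi|<d_0$, the bound $|x(\sigma,r,\xi)|\le |x|_{\mathcal{E}^{\lambda}_{d_0}}e^{\lambda|\sigma|}$ together with $|t-r|\le|t|+r$, $|s-r|\le|s|+r$ and $r\lambda<\delta\lambda=x^{*}$, the Lipschitz bound $|D^1f|\le M_1$ from {\bf (H2)}, and the elementary estimates $\bigl|\int_0^t e^{\lambda|s|}\,ds\bigr|\le e^{\lambda|t|}/\lambda$ and $|t|\le e^{\lambda|t|}/(\lambda e)$, I obtain
\[
|\mathcal{F}(x)(t,r,\xi)|\,e^{-\lambda|t|}\le d_0+\bigl(Me^{x^{*}}+M_1e^{x^{*}}/\lambda\bigr)|x|_{\mathcal{E}^{\lambda}_{d_0}}+|f(0,0)|/(\lambda e)<+\infty,
\]
so $\mathcal{F}(x)\in\mathcal{E}^{\lambda}_{d_0}$.

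Next I would estimate the difference of two images. For $x,y\in\mathcal{E}^{\lambda}_{d_0}$, the linearity of $A$ and the Lipschitz property $|f(u_1,v_1)-f(u_2,v_2)|\le M_1\max\{|u_1-u_2|,|v_1-v_2|\}$ give
\[
|\mathcal{F}(x)(t,r,\xi)-\mathcal{F}(y)(t,r,\xi)|\le M\,|x(t-r,r,\xi)-y(t-r,r,\xi)|+\Bigl|\int_0^t M_1|x-y|_{\mathcal{E}^{\lambda}_{d_0}}e^{\lambda(r+|s|)}\,ds\Bigr|,
\]
and, bounding the first term by $M|x-y|_{\mathcal{E}^{\lambda}_{d_0}}e^{x^{*}}e^{\lambda|t|}$ and the integral by $M_1|x-y|_{\mathcal{E}^{\lambda}_{d_0}}e^{x^{*}}e^{\lambda|t|}/\lambda$, I arrive at
\[
|\mathcal{F}(x)-\mathcal{F}(y)|_{\mathcal{E}^{\lambda}_{d_0}}\le\bigl(Me^{x^{*}}+M_1e^{x^{*}}/\lambda\bigr)|x-y|_{\mathcal{E}^{\lambda}_{d_0}}=\frac{Mx^{*}e^{x^{*}}+M_1\delta e^{x^{*}}}{x^{*}}\,|x-y|_{\mathcal{E}^{\lambda}_{d_0}},
\]
where the identity uses $\delta\lambda=x^{*}$. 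By (\ref{x-star-1}) the coefficient is $\le(Mx^{*}e^{x^{*}}+M_1r_0e^{x^{*}})/x^{*}<1$, so $\mathcal{F}$ is a contraction and the Contraction Mapping Principle yields a unique fixed point in $\mathcal{E}^{\lambda}_{d_0}$. The argument is routine; the only points requiring care are absorbing the shift factors $e^{\lambda r}$ into $r$-independent constants via $r\lambda<x^{*}$ and controlling the inhomogeneous term $f(0,0)$ through $|t|e^{-\lambda|t|}\le 1/(\lambda e)$, so I do not anticipate a genuine obstacle at this step — the substantive work (upgrading this continuous fixed point to one lying in $\mathcal{E}_{\gamma,\lambda}$ and hence $C^{k,1}$) is carried out in the subsequent lemmas using Lemmas \ref{lm-GB5-1}--\ref{lm-GB5-3} and Henry's lemma.
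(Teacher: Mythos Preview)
Your proof is correct and follows essentially the same route as the paper: show $\mathcal{F}$ maps $\mathcal{E}^{\lambda}_{d_0}$ into itself via the same splitting and the bound $|t|e^{-\lambda|t|}\le 1/(\lambda e)$, then establish the contraction estimate $|\mathcal{F}(x)-\mathcal{F}(y)|_{\mathcal{E}^{\lambda}_{d_0}}\le(Me^{x^{*}}+M_1e^{x^{*}}/\lambda)|x-y|_{\mathcal{E}^{\lambda}_{d_0}}$ and invoke (\ref{x-star-1}). The only cosmetic difference is that the paper replaces $1/\lambda=\delta/x^{*}$ by the cruder bound $r_0/x^{*}$ in writing the constants.
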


\begin{proof}
Obviously, for each $x\in \mathcal{E}^{\lambda}_{d_0}$, $\mathcal{F}(x)$ is a continuous map from $\mathbb{R}\times\Omega\times V^{0}_{d_0}$ to $\mathbb{R}^n$.
Using the same procedure as for Lemma \ref{lm-GB5-3} (i), we have
\begin{eqnarray*}
|\mathcal{F}(x)(t,r,\xi)|
\leq \left(d_0+|x|_{\mathcal{E}^{\lambda}_{d_0}}(Mx^{*}e^{x^{*}}+M_1r_0 e^{x^{*}})/x^{*}+|f(0,0)|r_0 /(ex^{*})\right)e^{\lambda|t|}.
\end{eqnarray*}
Then $\mathcal{F}$ maps $\mathcal{E}^{\lambda}_{d_0}$ into $\mathcal{E}^{\lambda}_{d_0}$. Moreover, for
any $x,y\in\mathcal{E}^{\lambda}_{d_0}$, we observe that
\begin{eqnarray*}
\lefteqn{|\mathcal{F}(x)(t,r,\xi)-\mathcal{F}(y)(t,r,\xi)|}\nonumber\\
\!\!\!&\leq&\!\!\!\!\!
|A||x(t-r,r,\xi)-y(t-r,r,\xi)|+|\!\! \int_0^{t} \!\!|f(x(s,r,\xi),x(s-r,r,\xi))-f(y(s,r,\xi),y(s-r,r,\xi))|ds|\nonumber\\
\!\!\!&\leq&\!\!\!\!\!
M|x-y|_{\mathcal{E}^{\lambda}_{d_0}}e^{\lambda (r+|t|)}+
|\int_0^{t} M_1 |x-y|_{\mathcal{E}^{\lambda}_{d_0}} e^{\lambda (r+|s|)}  ds|\nonumber\\
\!\!\!&\leq&\!\!\!\!\!
\left(Me^{r\lambda}+M_1e^{r\lambda}/\lambda\right)e^{\lambda |t|}|x-y|_{\mathcal{E}^{\lambda}_{d_0}},
\label{T-contr-lm6-5}
\end{eqnarray*}
which implies that
$|\mathcal{F}(x)-\mathcal{F}(y)|_{\mathcal{E}^{\lambda}_{d_0}}\leq (Me^{x^{*}}+M_1r_0 e^{x^{*}}/x^{*})|x-y|_{\mathcal{E}^{\lambda}_{d_0}}$.
Using (\ref{x-star-1}) yields that $\mathcal{F}$ is a contraction. By the Contraction
Mapping Principle, $\mathcal{F}$ has a unique fixed point in $\mathcal{E}^{\lambda}_{d_0}$. Then
the proof of Lemma \ref{lm-GB5-4} is complete.
\end{proof}

Next we prove Theorem \ref{thm-2}.
\begin{proof}[Proof of Theorem \ref{thm-2}]
We prove this theorem in three steps.
\vskip 0.2cm
\noindent{\bf Step (i).}
We first prove that there exists a continuous map $\Psi_1$ satisfying Theorem \ref{thm-2} (i) and (iii).
Let the constant $r_0$ satisfy Hypothesis {\bf (H2)}.
By (\ref{x-star-1}), we see $(Mx^{*}e^{x^{*}}+M_1r_0e^{x^{*}})/x^{*}<1$. Then we can choose a sufficiently large $\varepsilon_0>0$
such that
\begin{eqnarray}\label{esp-0-est}
(1-(Mx^{*}e^{x^{*}}+M_1r_0 e^{x^{*}})/x^{*})\varepsilon_0\geq d+|f(0,0)|r_0 /(ex^{*}).
\end{eqnarray}
By (\ref{x-star-2}), we note that for $\boldsymbol{\nu}=(\nu_1,\nu_2)$ with $1\leq|\boldsymbol{\nu}|\leq k$,
\begin{eqnarray*}
1-Me^{x^{*}}-MS_{(0,1)}>0,\ \ \ \ 1-M\sum_{\nu_1=0}^{|\boldsymbol{\nu}|}\frac{|\boldsymbol{\nu}|!S_{\boldsymbol{\nu}}}{(\nu_1!) (\nu_2!)}>0,
\end{eqnarray*}
and for $\boldsymbol{\nu}=(\nu_1,\nu_2)$ with $|\boldsymbol{\nu}|=k$,
\begin{eqnarray*}
1-M\sum_{\nu_1=0}^{k}\frac{k!S_{(\nu_1,\nu_2+1)}}{(\nu_1!) (\nu_2!)}>0.
\end{eqnarray*}
%
Then we can choose the constants $\varepsilon_{j}$ and $\delta_{j}$ for $j=1,...,k+1$ in the following way:
\begin{equation}\label{e-1-d-1}
\begin{split}
(1-Me^{x^{*}}-MS_{(0,1)})\varepsilon_1/2 &\geq M_1e^{x^{*}}\varepsilon_0+|f(0,0)|,
\\
\delta_1 &:=(1-Me^{x^{*}}-MS_{(0,1)})\varepsilon_1x^{*}/(2T_{(0,1)}),
\end{split}
\end{equation}
\begin{equation}\label{e-v-d-v}
\begin{split}
\left(1-M\sum_{\nu_1=0}^{|\boldsymbol{\nu}|}\frac{|\boldsymbol{\nu}|!S_{\boldsymbol{\nu}}}{(\nu_1!) (\nu_2!)}\right)
\varepsilon_{|\boldsymbol{\nu}|}
& \geq 2 \sum_{\nu_1=1}^{|\boldsymbol{\nu}|}\frac{|\boldsymbol{\nu}|!T_{(\nu_1-1,\nu_2)}}{(\nu_1!) (\nu_2!)},
\\
\delta_{|\boldsymbol{\nu}|}
& :=\frac{\left(1-M\sum_{\nu_1=0}^{|\boldsymbol{\nu}|}\frac{|\boldsymbol{\nu}|!S_{\boldsymbol{\nu}}}{(\nu_1!) (\nu_2!)}\right)
\varepsilon_{|\boldsymbol{\nu}|} |\boldsymbol{\nu}| x^{*}}
{2T_{(0,|\boldsymbol{\nu}|)}},\, 2\leq |\boldsymbol{\nu}|\leq k,
\end{split}
\end{equation}
\begin{equation}\label{d-k+1}
\begin{split}
\left(1-M\sum_{\nu_1=0}^{k}\frac{k!S_{(\nu_1,\nu_2+1)}}{(\nu_1!) (\nu_2!)}\right)\varepsilon_{k+1}
&\geq
2 \sum_{\nu_1=1}^{k}\frac{k!\left(T_{(\nu_1,\nu_2)}+T_{(\nu_1-1,\nu_2+1)}\right)}{(\nu_1!) (\nu_2!)}+2T_{(0,k)},\\
\delta_{|\boldsymbol{\nu}|}&:=
\frac{\left(1-M\sum_{\nu_1=0}^{k}\frac{k!S_{(\nu_1,\nu_2+1)}}{(\nu_1!) (\nu_2!)}\right)\varepsilon_{k+1}(k+1) x^{*}}{2T_{(0,k+1)}}.
\end{split}
\end{equation}
By Lemma \ref{lm-GB5-1}, we further observe that for each $\boldsymbol{\nu}$,
$T_{\boldsymbol{\nu}}$ only depends on $\varepsilon_0$,...,$\varepsilon_{|\boldsymbol{\nu}|}$,
which guarantees that all $\varepsilon_{j}$ and $\delta_{j}$ are well defined.
Take $\delta:=\min\{r_0, \delta_1, \delta_2, \ldots, \delta_{k+1}\}$.
Clearly, $\delta$ is independent of $\mathbb{\xi}\in \mathbb{R}^{n}$.
Applying the above parameters, for each $\gamma \in \{0,1\}$ we define the set $\mathcal{E}_{\gamma,\lambda}$.
According to Lemma \ref{lm-5-complete}, for each $\gamma \in \{0,1\}$,
$(\mathcal{E}_{\gamma,\lambda}, \rho_{\gamma})$ is a complete metric space.

Obviously, for each $x\in \mathcal{E}_{\gamma,\lambda}$, $\mathcal{F}(x)$ is a continuous map from
$\mathbb{R}\times\Omega\times V_{\gamma}$ to $\mathbb{R}^n$ and
$\mathcal{F}(x)(t,r, \xi)$ is $C^k$ in $(t,r)$ for each $\xi\in V_{\gamma}$.
By (\ref{esp-0-est}) and Lemma \ref{lm-GB5-3} (i),
we find that $|\mathcal{F}(x)|_{\mathcal{E}_{\gamma,\lambda}}\leq \varepsilon_0$.
By (\ref{e-1-d-1})-(\ref{d-k+1}), Lemma \ref{derivt} and Lemma \ref{lm-GB5-3},
for $0<r<\delta$ we find that $|\mathcal{F}(x)|_{\mathcal{E}_{\gamma,\lambda},j}\leq \varepsilon_j$ for $j=1,...,k+1$.
Then $\mathcal{F}$ maps $\mathcal{E}_{\gamma,\lambda}$ into itself.
Moreover, similarly to Lemma \ref{lm-GB5-4}, we also prove that $\mathcal{F}$ is a contraction in $\mathcal{E}_\gamma$.
By the Contraction Mapping Principle, $\mathcal{F}$ has a unique fixed point in the complete metric space $(\mathcal{E}_{\gamma,\lambda}, \rho_{\gamma})$.
We denote this fixed point by $\Psi_1$.
Then $\Psi_1$ satisfies Theorem \ref{thm-2} (i) and (iii).
\vskip 0.2cm
\noindent{\bf Step (ii).} Secondly, we prove that there exists a continuous map $\Psi_2$ satisfying Theorem \ref{thm-2} (ii).
For any constant $d_0>0$, by the similar method of Theorem \ref{thm-1},
there exists a constant $\delta$ which is independent of $d_0$
such that the operator $\mathcal{F}$ has a unique fixed point $\Psi_2$, which is
a continuous map from $\mathbb{R}\times\Omega\times V^{0}_{d_0}$ to $\mathbb{R}^n$, and
for each fixed $(t,r)\in\mathbb{R}\times\Omega$, $\Psi_2(t,r,\cdot\,)$ is $C^{k,1}$
and there exists a sequence $\{\beta_{j}\}_{j=0}^{k+1}$ such that
\begin{eqnarray*}
\sup_{(t,r,\xi)\in\mathbb{R}\times\Omega\times V^{0}_{d_0}} |\Psi_2(t,r,\xi)|e^{-\lambda|t|} \!\!\! &\leq&\!\!\! \beta_0,\\
\sup_{(t,r,\xi)\in\mathbb{R}\times\Omega\times V^{0}_{d_0}} |D_2^j\Psi_2(t,r,\xi)|e^{-j\lambda|t|} \!\!\!&\leq&\!\!\! \beta_j, \ \  j=1,2,...,k,\nonumber\\
\sup_{\xi_1\neq\xi_2}\frac{|D_2^k \Psi_2(t,r,\xi_1)-D_2^k \Psi_2(t,r,\xi_2)|}{|\xi_1-\xi_2|}e^{-(k+1)\lambda|t|} \!\!\!&\leq&\!\!\! \beta_{k+1},
\end{eqnarray*}
where the constant $\lambda$ satisfies the conditions stated in {\bf (H2)}.
Without loss of generality, we assume that $\delta$ in this step is the same as the one in step 1,
otherwise, we choose the minimum one.
\vskip 0.2cm
\noindent{\bf Step (iii).} Finally, we prove $\Psi_1=\Psi_2$ on $\mathbb{R}\times\Omega\times V^{0}_{d_0}$.
By step 1 we see that $\Psi_1\in \mathcal{E}_{\gamma,\lambda}$.
Then by the property of $\mathcal{E}_{\gamma,\lambda}$, we have $\Psi_1\in \mathcal{E}^{\lambda}_{d_0}$.
By step 2, we also have $\Psi_2\in \mathcal{E}^{\lambda}_{d_0}$. In view of Lemma \ref{lm-GB5-4}, we find
$\Psi_1=\Psi_2$ on $\mathbb{R}\times\Omega\times V^{0}_{d_0}$.
Note that the arbitrariness of $d_0$ and the fact that $\delta$ is independent of $d_0$.
We have $\Psi_1=\Psi_2$ on $\mathbb{R}\times\Omega\times \mathbb{R}^{n}$.
Therefore, the proof of Theorem \ref{thm-2} is complete.
\end{proof}
%

\section{Illustrating example}
In this section we apply the main results obtained in Theorem \ref{thm-2} to
study the dynamics of the following neutral differential system
\begin{eqnarray}
\label{VDP-1}
\begin{split}
\frac{d}{dt}\left\{\tilde{x}_{1}(t)-c\tilde{x}_{1}(t-\tilde{r})\right\} &= \tilde{x}_{2}(t)-\tilde{\varepsilon}\left\{\frac{1}{2}\tilde{x}_{1}^{2}(t)+\frac{1}{3}\tilde{x}_{1}^{3}(t)\right\},
\\
\frac{d}{dt} \tilde{x}_{2}(t)  &= b-\tilde{x}_{1}(t-\tilde{r}),
\end{split}
\end{eqnarray}
where $(\tilde{x}_{1},\tilde{x}_{2})\in\mathbb{R}^{2}$, $b\in\mathbb{R}$, $1>c>0$, $\tilde{\varepsilon}>0$ and $\tilde{r}>0$.
Letting $c=0$ and $\tilde{r}=0$, system (\ref{VDP-1})
becomes the well-known van der Pol (abbreviated as vdP) oscillator model.
Then we call system (\ref{VDP-1}) a vdP oscillator model of neutral type.
Here we are interested in the case  $1/\tilde{r}\gg \tilde{\varepsilon}^{3}\gg 1$,
that is,  the vdP oscillator model (\ref{VDP-1})  is in the relaxation case and has a small time delay.
By a rescaling
$(\tilde{x}_{1},\tilde{x}_{2},t)\to (\tilde{x}_{1},\tilde{\varepsilon}\tilde{x}_{2},t/\tilde{\varepsilon})$,
the vdP oscillator model (\ref{VDP-1})  is changed into
\begin{eqnarray}
\label{VDP-2}
\begin{split}
\frac{d}{dt}\left\{x_{1}(t)-cx_{1}(t-r)\right\} &= x_{2}(t)-\left\{\frac{1}{2}x_{1}^{2}(t)+\frac{1}{3}x_{1}^{3}(t)\right\}:=f_{1}(x_{1}(t),x_{2}(t)),
\\
\frac{d}{dt} x_{2}(t)  &= \varepsilon(b- x_{1}(t-r)):=f_{2}(x_{1}(t-r)),
\end{split}
\end{eqnarray}
where $x_{1}(t)=\tilde{x}_{1}(t/\tilde{\varepsilon})$, $x_{2}(t)=\tilde{x}_{2}(t/\tilde{\varepsilon})$,
$\varepsilon=1/\tilde{\varepsilon}^2$, $r=\tilde{\varepsilon}\tilde{r}$ and $0<r\ll \varepsilon\ll 1$.
When $r=0$,
then system (\ref{VDP-2}) is changed into a standard slow-fast system
\begin{eqnarray}
\label{VDP-3}
\begin{split}
(1-c)\frac{d}{dt}x_{1}(t) &= x_{2}(t)-\left\{\frac{1}{2}x_{1}^{2}(t)+\frac{1}{3}x_{1}^{3}(t)\right\},
\\
\frac{d}{dt} x_{2}(t)  &= \varepsilon(b- x_{1}(t)).
\end{split}
\end{eqnarray}
In distinct parameter regions,
complex oscillation phenomena occur near the following curve
\begin{eqnarray*}
\mathcal{W}_{1}:=\left\{(x_{1},x_{2})\in\mathbb{R}^{2}: x_{2}=\frac{1}{2}x_{1}^{2}+\frac{1}{3}x_{1}^{3}, \ -3/2\leq x_{1}\leq 1/2\right\},
\end{eqnarray*}
which contains all non-hyperbolic points of the critical manifold for the slow-fast system (\ref{VDP-3}).
In particular, under some suitable conditions the slow-fast system (\ref{VDP-3})
has hyperbolic periodic orbits, such as relaxation oscillations and canard cycles
(see, for instance, \cite{Dumortieretal-Roussarie-96}).
One of interesting problems is to study the effect of small delay $r$ on these periodic orbits
near the curve $\mathcal{W}_{1}$.
Without loss of generality,
we consider the restriction of system (\ref{VDP-2}) on the set
$$\mathcal{W}_{2}:=\left\{(x_{1},x_{2})\in\mathbb{R}^{2}: -2\leq x_{1}\leq 1, \ -1/2\leq x_{2}\leq 1/2\right\},$$
the interior of which includes the set $\mathcal{W}_{1}$.
Note that both
$f_{1}$ and $f_{2}$ are polynomials,
then for a large positive integer $k$,
there exists  constants $M_{j}$, $j=1,...,k+1$,
such that the restriction of $f=(f_{1},f_{2})^{T}$ on the set $\mathcal{W}_{2}$ satisfies {\bf (H2)}.
By applying the cut-off technique,
we can get a modified system which is consistent with the original system (\ref{VDP-2}) in the set $\mathcal{W}_{2}$
and satisfies {\bf (H2)}.
More specifically,
let the cut-off function $\chi:[0,+\infty)\to [0,1]$ satisfy the following properties:
{\rm (i')} $\chi\in C^{\infty}$;
{\rm (ii')} $\chi(u)=1$ for $u\in[0,1]$ and $\chi(u)=0$ for $u\in[2,+\infty)$;
{\rm (iii')} $\sup_{u \in [0,+\infty)} |\chi'(u)| \le 2$.
We define the modified function $\tilde{f}$ of $f$ by
$\tilde{f}(\phi(0),\phi(-r))=f(\chi(|\phi|/\kappa)\phi(0),\chi(|\phi|/\kappa)\phi(-r))$
for each $\phi\in\mathcal{C}$ and a certain positive constant $\kappa$.
Then by Theorem \ref{thm-2},
the modified system has a two-dimensional inertial manifold which is $C^{k,1}$ in $r$.
Consider the restriction of the modified system on this inertial manifold
and let $r=0$ in  system (\ref{VDP-2}).
We obtain that  the zeroth-order approximation of the restricted system  in the set $\mathcal{W}_{2}$ has the form (\ref{VDP-3}).
The high-order approximations of the restricted system can be obtained by the similar method used in
\cite{Chicone03,Chicone04} for retarded differential equations.
Note that the restricted system can be given by a two-dimensional ordinary differential system,
then by the structural stability of hyperbolic periodic orbits,
we can obtain that system (\ref{VDP-2})  with small delay $r$ also have periodic orbits
near the hyperbolic relaxation oscillations  and hyperbolic canard cycles arising in the slow-fast system (\ref{VDP-3}).
As a result, Theorem \ref{thm-2} is useful to study the dynamics of the vdP oscillator model (\ref{VDP-1}) with small delay.

\section{Discussion}

We hope that the method used here could be applied to
establish the existence of smooth invariant manifolds,
such as stable manifolds, unstable manifolds, center manifolds and so on, for other classes of evolutionary equations.
In particular, to the best of our knowledge,
the existence and the smoothness  of inertial manifolds for neutral differential equations with arbitrary delays
are not understood yet.
We also point out that our results are not sharp.
It is interesting to give a sharp smallness condition
such that equations (\ref{NA-NDE}) and (\ref{A-NDE}) both have $C^{k,1}$ inertial manifolds.

It is also possible to make a weaker condition on $M$ to obtain smooth inertial manifolds
for equations (\ref{NA-NDE}) and (\ref{A-NDE}).
All these conditions on  $M$ in {\bf (H1)} and  {\bf (H2)} are used to guarantee that the Contraction Mapping Principle is valid.
It should be noted especially that,
to guarantee the smoothness of the inertial manifold for equation (\ref{A-NDE}) with respect to the delay $r$,
we demand that the contraction condition $M$ in {\bf (H2)} is sharper than one in {\bf (H1)}.
We hope that all these conditions on  $M$ can be relaxed.

It is also interesting  to study whether there exists an inertial manifold of equation  (2.3) with some smallness conditions
is analytic (resp. $C^{\infty}$) in the delay $r$ if the function $f$ is analytic (resp. $C^{\infty}$).
However, for retarded differential equation (2.3) (with $A=0$),
our method could prove that equation (2.3) admits an $C^{\infty}$ inertial manifold in the delay $r$
if the function $f$ is $C^{\infty}$.
Complex oscillations arising from the vdP oscillator model (\ref{VDP-2}) and the difference between the original system
and its approximate systems should also be further studied.

\subsection*{Appendix A}\label{sec-app}
 \renewcommand\theequation{A.1}
Let $H(x):=x e^{-x}-Mx$, $x\in \mathbb{R}$. We consider the following equation
\begin{eqnarray}\label{exp-poly-1}
H(x)=M_1r, \ \ \ \  x\in \mathbb{R},
\end{eqnarray}
where the constants $M>0$, $M_1>0$ and $r>0$ are defined in Hypothesis {\bf (H1)} (resp. {\bf (H2)}).
Then equation (\ref{exp-poly-1}) has positive real roots if and only if
$0<M<1$ and $0<M_1r\leq H(x_0)$,
where $x_0\in(0,1)$ is the unique positive real zero of $H'$,
the derivative of $H$ with respect to $x$.
If $0<M<1$ and $0<M_1r< H(x_0)$,
then there exist exactly two positive real roots of equation (\ref{exp-poly-1}),
which are denoted by $x_1(r)$ and $x_2(r)$
with $0<x_1(r)<x_0<x_2(r)<-\ln M$,
and $H(x)>M_1r$ for $x\in (x_1(r),x_2(r))$.
\begin{proof}
(Necessity)
Suppose that equation (\ref{exp-poly-1}) has positive real roots,
we first prove $0<M<1$. Otherwise,
suppose that the constant $M$ satisfies $M\geq 1$.
Clearly, the first and second order derivative of $H$ with respect to $x$ are in the form
$H'(x)=(1-x)e^{-x}-M$ and $H^{''}(x)=(x-2)e^{-x}$ for $x\in \mathbb{R}$, respectively.
Then, we see that
$H'(0)=1-M>-M$, $H'(x)\leq -M$ for $x\geq 1$ and $H'$ decreases monotonely in the interval $[0,1]$.
Thus, $\max_{x\geq0}H'(x)=H'(0)=1-M\leq 0$,
which implies  $H(x)\leq H(0)=0< M_1r$ for $x\geq 0$.
Hence, equation (\ref{exp-poly-1}) has no positive real roots,
which is a contraction.
Thus, the constant $M$ satisfies $0<M<1$.

Note that for $0<M<1$, $H'(0)=1-M>0$, $H'(x)\leq -M<0$ for $x\geq 1$,
and $H'$ decreases monotonely in the interval $[0,1]$.
Then there is exactly one zero of $H^{'}$ on $[0,+\infty)$,
which is denoted by $x_0$ with $0<x_0<1$,
further, $H'(x)\geq 0$ for $x\in [0, x_0]$
and $H'(x)\leq 0$ for $x\in [x_0, +\infty)$.
Hence, $\max_{x\geq0}H(x)=H(x_0)>0$.
Thus, if $H(x)=M_1 r$ has positive real roots,
then it is necessary that $M_1 r\leq H(x_0)$.
Therefore, the necessity is proved.

(Sufficiency)
If $0<M<1$, from the proof of the necessity, we see that $x_0$ is well defined.
Note that for $0<M<1$, $H$ has a positive real zero $-\ln M$,
thus the sufficiency is proved.

Assume $0<M<1$, then by the statements above, we see that
$H$ increases monotonely in $[0,x_0]$ and decreases monotonely in $[x_0,+\infty)$,
$\max_{x\geq0} H(x)=H(x_0)>H(0)=0$,
and $H$ has exactly two nonnegative real zeros, that is, $0, -\ln M$.
Hence, for $0<M_1r< H(x_0)$,
equation (\ref{exp-poly-1}) has exactly two positive real zeros
$x_1(r)$ and $x_2(r)$  with $0<x_1(r)<x_0<x_2(r)<-\ln M$,
and $H(x)> M_1r$ for $x\in (x_1(r),x_2(r))$.
Therefore, the proof is now complete.
\end{proof}

\subsection*{Appendix B. Proof of (\ref{est-G-dev})}
By Lemma \ref{partial-devt}, we obtain
\renewcommand\theequation{B.1}
\begin{equation}
\begin{split}
 \label{prop-2-1}
\lefteqn{D^k_2G(t,\xi_1)-D^k_2G(t,\xi_2)}\\
=&\,  k!\sum_{m=1}^{k}D^m_2F(t,y_t)\sum_{p(k,m)}\prod_{i=1}^{k}\frac{(D^{i}_2y_t)^{\omega_i}}{(\omega_i!)(i!)^{\omega_i}}
-k!\sum_{m=1}^{k}D^m_2F(t,z_t)\sum_{p(k,m)}\prod_{i=1}^{k}\frac{(D^{i}_2z_t)^{\omega_i}}{(\omega_i!)(i!)^{\omega_i}}\\
=&\, k!\sum_{m=1}^{k}(D^m_2F(t,y_t)-D^m_2F(t,z_t))\sum_{p(k,m)}\prod_{i=1}^{k}\frac{(D^{i}_2y_t)^{\omega_i}}{(\omega_i!)(i!)^{\omega_i}} +k!\sum_{m=1}^{k}D^m_2F(t,z_t)\sum_{p(k,m)}\widetilde{I}_{2}^{k},
\end{split}
\end{equation}
where
\renewcommand\theequation{B.2}
\begin{equation}
\begin{split}
\label{prop-2-2}
\widetilde{I}_{2}^{k}
:=&\,
\prod_{i=1}^{k} \frac{(D^{i}_2y_t)^{\omega_i}}{(\omega_i!)(i!)^{\omega_i}}
-\prod_{i=1}^{k} \frac{(D^{i}_2z_t)^{\omega_i}}{(\omega_i!)(i!)^{\omega_i}}\\
=&\,
\frac{\widetilde{Q}_{1}}{(\omega_1!)(1!)^{\omega_1}}\prod_{i=2}^{k} \frac{(D^{i}_2y_t)^{\omega_i}}{(\omega_i!)(i!)^{\omega_i}}
+\frac{(D^{1}_2z_t)^{\omega_1}}{(\omega_1!)(1!)^{\omega_1}}\frac{\widetilde{Q}_{2}}{(\omega_2!)(2!)^{\omega_2}}
\prod_{i=3}^{k} \frac{(D^{i}_2y_t)^{\omega_i}}{(\omega_i!)(i!)^{\omega_i}}\\
& +\cdot\cdot\cdot+\prod_{i=1}^{k-1} \frac{(D^{i}_2z_t)^{\omega_i}}{(\omega_i!)(i!)^{\omega_i}}\frac{\widetilde{Q}_{k}}{(\omega_k!)(k!)^{\omega_k}}\\
=&\,
\sum_{j=1}^{k}
\left(\prod_{i=0}^{j-1} \frac{(D^{i}_2z_t)^{\omega_i}}{(\omega_i!)(i!)^{\omega_i}}\right)
\left(\frac{\widetilde{Q}_{j}}{(\omega_j!)(j!)^{\omega_j}}\right)
\left(\prod_{i=j+1}^{k+1} \frac{(D^{i}_2y_t)^{\omega_i}}{(\omega_i!)(i!)^{\omega_i}}\right),\\
\end{split}
\end{equation}
\renewcommand\theequation{B.3}
\begin{equation}
\begin{split}
\label{prop-2-3}
\widetilde{Q}_{j}
:=&\, (D^{j}_2y_t)^{\omega_j}-(D^{j}_2z_t)^{\omega_j}\\
=&\,
(D^{j}_2y_t-D^{j}_2z_t)(D^{j}_2y_t)^{\omega_j-1}+D^{j}_2z_t(D^{j}_2y_t-D^{j}_2z_t)(D^{j}_2y_t)^{\omega_j-2}\\
&\,+\cdot\cdot\cdot+(D^{j}_2z_t)^{\omega_j-1}(D^{j}_2y_t-D^{j}_2z_t).
\end{split}
\end{equation}
Along with (\ref{prop-2-1})-(\ref{prop-2-3}),
it is easy to verify that (\ref{est-G-dev}) holds.

\subsection*{Appendix C. Proof of (\ref{k-lip})}
By Lemma \ref{partial-devt}, we have
\renewcommand\theequation{C.1}
\begin{equation}\label{prop-3-1}
\begin{split}
& \lefteqn{ \frac{\partial^{k}}{\partial r^{k}} f(x(t,r_1,\xi),x(t-r_1,r_1,\xi))
-\frac{\partial^{k}}{\partial r^{k}} f(x(t,r_2,\xi),x(t-r_2,r_2,\xi))}\\
=&\,
\sum_{1\leq|\boldsymbol{\omega}|\leq k}
f_{\boldsymbol{\omega}}(r_1)
\sum_{s=1}^{k}\!\sum_{p_s(\boldsymbol{\nu},\boldsymbol{\omega})}
(k!)\prod_{j=1}^{s}
\frac{({\bf g}_{\boldsymbol{l_j}}(r_1))^{\boldsymbol{k_j}}}{(\boldsymbol{k_j}!)(\boldsymbol{l_j}!)^{|\boldsymbol{k_j}|}}\\
&\,
-\sum_{1\leq|\boldsymbol{\omega}|\leq k}
f_{\boldsymbol{\omega}}(r_2)
\sum_{s=1}^{k}\!\sum_{p_s(\boldsymbol{\nu},\boldsymbol{\omega})}
(k!)\prod_{j=1}^{s}
\frac{({\bf g}_{\boldsymbol{l_j}}(r_2))^{\boldsymbol{k_j}}}{(\boldsymbol{k_j}!)(\boldsymbol{l_j}!)^{|\boldsymbol{k_j}|}}\\
=&\,
\sum_{1\leq|\boldsymbol{\omega}|\leq k}\!\!
(f_{\boldsymbol{\omega}}(r_1)-f_{\boldsymbol{\omega}}(r_2))
\sum_{s=1}^{k}\!\sum_{p_s(\boldsymbol{\nu},\boldsymbol{\omega})}
(k!)\prod_{j=1}^{s}
\frac{({\bf g}_{\boldsymbol{l_j}}(r_1))^{\boldsymbol{k_j}}}{(\boldsymbol{k_j}!)(\boldsymbol{l_j}!)^{|\boldsymbol{k_j}|}}\\
&\,+\sum_{1\leq|\boldsymbol{\omega}|\leq k}\!\! f_{\boldsymbol{\omega}}(r_2)
\sum_{s=1}^{k}\!\sum_{p_s(\boldsymbol{\nu},\boldsymbol{\omega})}(k!) \widetilde{\Delta}_{s},
\end{split}
\end{equation}
where
\renewcommand\theequation{C.2}
\begin{equation}
\begin{split}
\label{prop-3-2}
\widetilde{\Delta}_{s}:
=&\,
\prod_{j=1}^{s}\frac{({\bf g}_{\boldsymbol{l_j}}(r_1))^{\boldsymbol{k_j}}}{(\boldsymbol{k_j}!)(\boldsymbol{l_j}!)^{|\boldsymbol{k_j}|}}
-\prod_{j=1}^{s}\frac{({\bf g}_{\boldsymbol{l_j}}(r_2))^{\boldsymbol{k_j}}}{(\boldsymbol{k_j}!)(\boldsymbol{l_j}!)^{|\boldsymbol{k_j}|}}\\
=&\,
\frac{\widetilde{\Theta}_{1}}{(\boldsymbol{k_1}!)(\boldsymbol{l_1}!)^{|\boldsymbol{k_1}|}}
\prod_{j=2}^{s}\frac{({\bf g}_{\boldsymbol{l_j}}(r_1))^{\boldsymbol{k_j}}}{(\boldsymbol{k_j}!)(\boldsymbol{l_j}!)^{|\boldsymbol{k_j}|}}
+\frac{({\bf g}_{\boldsymbol{l_1}}(r_2))^{\boldsymbol{k_1}}}{(\boldsymbol{k_1}!)(\boldsymbol{l_1}!)^{|\boldsymbol{k_1}|}}
\frac{\widetilde{\Theta}_{2}}{(\boldsymbol{k_2}!)(\boldsymbol{l_2}!)^{|\boldsymbol{k_2}|}}
\prod_{j=3}^{s}\frac{({\bf g}_{\boldsymbol{l_j}}(r_1))^{\boldsymbol{k_j}}}{(\boldsymbol{k_j}!)(\boldsymbol{l_j}!)^{|\boldsymbol{k_j}|}} \\
&\, +\cdot\cdot\cdot+
\prod_{j=1}^{s-1}\frac{({\bf g}_{\boldsymbol{l_j}}(r_2))^{\boldsymbol{k_j}}}{(\boldsymbol{k_j}!)(\boldsymbol{l_j}!)^{|\boldsymbol{k_j}|}}
\frac{\widetilde{\Theta}_{s}}{(\boldsymbol{k_s}!)(\boldsymbol{l_s}!)^{|\boldsymbol{k_s}|}} \\
=&\, \sum_{j=1}^{s}
\left(\prod_{i=0}^{j-1}
\frac{({\bf g}_{\boldsymbol{l_i}}(r_2))^{\boldsymbol{k_i}}}{(\boldsymbol{k_i}!)(\boldsymbol{l_i}!)^{|\boldsymbol{k_i}|}}\right)
\left(\frac{\widetilde{\Theta}_{j}}{(\boldsymbol{k_j}!)(\boldsymbol{l_j}!)^{|\boldsymbol{k_j}|}}\right)
\left(\prod_{i=j+1}^{s+1}
\frac{({\bf g}_{\boldsymbol{l_i}}(r_1))^{\boldsymbol{k_i}}}{(\boldsymbol{k_i}!)(\boldsymbol{l_i}!)^{|\boldsymbol{k_i}|}} \right),
\end{split}
\end{equation}
\renewcommand\theequation{C.3}
\begin{equation}
\begin{split}
\label{prop-3-3}
\widetilde{\Theta}_{j}:=&
({\bf g}_{\boldsymbol{l_j}}(r_1))^{\boldsymbol{k_j}}-({\bf g}_{\boldsymbol{l_j}}(r_2))^{\boldsymbol{k_j}} \\
=&\,
\widetilde{\Theta}_{1,j}\prod_{m=2}^{2n}(g_{\boldsymbol{l_j}}^{(m)}(r_1))^{k_{j,m}}
+(g_{\boldsymbol{l_j}}^{(1)}(r_2))^{k_{j,1}}\widetilde{\Theta}_{2,j}\prod_{m=3}^{2n}(g_{\boldsymbol{l_j}}^{(m)}(r_1))^{k_{j,m}}\\
&\, +\cdot\cdot\cdot+
\prod_{m=1}^{2n-1}(g_{\boldsymbol{l_j}}^{(m)}(r_2))^{k_{j,m}}\widetilde{\Theta}_{2n,j}
\end{split}
\end{equation}
\begin{equation*}
\begin{split}
=&\,
\sum_{i=1}^{2n}
\left(\prod_{m=0}^{i-1} (g_{\boldsymbol{l_j}}^{(m)}(r_2))^{k_{j,m}}\right)
\widetilde{\Theta}_{i,j}
\left(\prod_{m=i+1}^{2n+1}(g_{\boldsymbol{l_j}}^{(m)}(r_1))^{k_{j,m}}\right),
\end{split}
\end{equation*}
\renewcommand\theequation{C.4}
\begin{equation} \label{prop-3-4}
\begin{split}
\widetilde{\Theta}_{i,j}:
=&\,
(g^{(i)}_{\boldsymbol{l_j}}(r_1))^{k_{j,i}}-(g^{(i)}_{\boldsymbol{l_j}}(r_2))^{k_{j,i}}\\
=&\,(g^{(i)}_{\boldsymbol{l_j}}(r_1)-g^{(i)}_{\boldsymbol{l_j}}(r_2))
\sum_{m=0}^{k_{j,i}-1}(g^{(i)}_{\boldsymbol{l_j}}(r_1))^{m}(g^{(i)}_{\boldsymbol{l_j}}(r_2))^{k_{j,i}-m-1}.
\end{split}
\end{equation}
Substituting (\ref{prop-3-2})-(\ref{prop-3-4}) into (\ref{prop-3-1}),
it is easy to verify that (\ref{k-lip}) holds.



\bibliographystyle{plain}

\begin{thebibliography}{79}



\bibitem{Arino-Pituk} O. Arino and M. Pituk,
        More on linear differential systems with small delays,
        {\it J. Differential Equations} {\bf 170} (2001), 381--407.

\bibitem{Barreira-Valls2006} L. Barreira and C. Valls,
        Smooth invariant manifolds in Banach spaces with nonuniform exponential dichotomy,
        {\it J. Funct. Anal.} {\bf 238} (2006), 118--148.

\bibitem{Barreira-Valls2007} L. Barreira and C. Valls,
     Smooth center manifolds for nonuniformly partially hyperbolic trajectories,
       {\it J. Differential Equations} {\bf 237} (2007), 307--342.


\bibitem{Bessaihetal} H. Bessaih, M. J. Garrido-Atienza and B. Schmalfu{\ss},
       On 3D Navier-Stokes equations: Regularization and uniqueness by delays,
       {\it Physica D} {\bf 376-377} (2018), 228--237.

\bibitem{Burton-85} T. Burton,
     {\it Stability and Periodic Solutions of Ordinary Differential Equation and Functional Differential Equations,}
     Academic Press, Orlando, FL, 1985.

\bibitem{Campbelletal} S. A. Campbell, E. Stone and T. Erneux,
       Delay induced canards in a model of high speed machining,
       {\it Dyn. Syst.} {\bf 24} (2009), 373--392.

\bibitem{Casal-Corsi-Llave-20}
 A. Casal, L. Corsi and R. de la Llave,
 Expansions in the delay of quasi-periodic solutions for state dependent delay equations,
 {\it J. Phys. A} {\bf 53} (2020), 235202, 20 pp.

\bibitem{Chen-Shen-20} S. Chen and J. Shen,
Large spectral gap induced by small delay and its application to reduction,
{\it Discrete Contin. Dyn. Syst.}  {\bf 40} (2020),  4533--4564.

\bibitem{Chicone03} C. Chicone,
       Inertial and slow manifolds for delay equations with small delays,
       {\it J. Differential Equations} {\bf 190} (2003), 364--406.

\bibitem{Chicone04} C. Chicone,
       Inertial flows, slow flows, and combinatorial identities for delay equations,
       {\it J. Dynam. Differential Equations} {\bf 16} (2004), 805--831.

\bibitem{Chow-Lu} S.-N. Chow and K. Lu,
      $C^k$ centre unstable manifolds,
      {\it Proc. Roy. Soc. Edinburgh} {\bf 108A} (1988), 303--320.

\bibitem{Constaintine-Savits} G.M. Constantine and T.H. Savits,
      A multivariate Faa di Bruno formula with applications,
      {\it Trans. Amer. Math. Soc.} {\bf 348} (1996), 503--520.

\bibitem{Dieudonne} J. Dieudonn\'e,
       {\it Foundations of Modern Analysis, Vol. I,} Academic Press, New York, 1969.


\bibitem{Driver1968} R. D. Driver,
      On Ryabov's asymptotic characterizaiton of the solutions of quasilinear differential equations with small delays,
      {\it SIAM Rev.} {\bf 10} (1968), 329--341.

\bibitem{Driver1976} R. D. Driver,
      Linear differential systems with small delays,
      {\it J. Differential Equations} {\bf 21} (1976), 148--166.

\bibitem{Dumortieretal-Roussarie-96}  F. Dumortier, R. Roussarie,
    {\it Canard Cycles and Center Manifolds}, Mem. Amer. Math. Soc. {\bf 577}, Providence, 1996.

\bibitem{Elbialy2000} M.S. Elbialy,
         On sequences of $C_b^{k,\delta}$ maps which converge in the uniform $C^0$-norm,
         {\it Proc. Amer. Math. Soc.} {\bf 128} (2000), 3285--3290.

\bibitem{Elbialy2001} M.S. Elbialy,
        Sub-stable and weak-stable manifolds associated with finitely non-resonant spectral subspaces,
       {\it  Math. Z.} {\bf 236} (2001), 717--777.

\bibitem{Erneuxetal} T. Erneux, L. Weicker, L. Bauer and P. Hovel,
        Short-time-delay limit of the self-coupled FitzHugh-Nagumo system,
        {\it Phys. Rev. E} {\bf 93} (2016), 022208.

\bibitem{Faria-Huang} T. Faria and W. Huang,
     Special solutions for linear functional differential equations and asymptotic behaviour,
   {\it Differential Integral Equations} {\bf 18} (2005), 337--360.

\bibitem{Foiaseatal} C. Foias, G. Sell and R. Teman,
         Inertial manifolds for nonlinear evolutionary equations,
        {\it J. Differential Equations} {\bf 73} (1988), 309--353.

\bibitem{Fowler} A. C. Fowler,
        Asymptotic methods for delay equations,
       {\it J. Engrg. Math.} {\bf 53} (2005), 271--290.

\bibitem{Guoetal} Q. Guo, X. Mao and R. Yue,
       Almost sure exponential stability of stochastic differential delay equations,
     {\it SIAM J. Control Optim.} {\bf 54} (2016), 1919--1923.

\bibitem{Gyori-Pitukl} I. Gy\"ori and M. Pituk,
       Special solutions for neutral functional differential equations,
        {\it J. Inequal. Appl.} {\bf 6} (2001), 99--117.

 \bibitem{Hale-Ladeira} J. K. Hale and L. Ladeira,
      Differentiability with respect to delays,
      {\it J. Differential Equations} {\bf 92} (1991), 14--26.


\bibitem{JKHale-Verduyn} J. K. Hale and S. M. Verduyn Lunel,
               {\it Introduction to Functional Differential Equations,}
               Springer, New York, 1993.

\bibitem{Hale-Lunel2001} J.K. Hale and S.M. Verduyn Lunel,
         Effects of small delays on stability and control,
       {\it  Operator Theory and Analysis (Amsterdam, 1997), Oper. Theory Adv. Appl., Vol. 122}, Birkh\"auser, Basel, (2001), 275--301.


\bibitem{Hale-Lunel2002} J.K. Hale and S.M. Verduyn Lunel,
       Strong stabilization of neutral functional differential equations,
       {\it IMA J. Math. Control Inform.} {\bf 19} (2002), 5--23.

\bibitem{Hartung2013} F. Hartung,
      On differentiability of solutions with respect to parameters
                  in neutral differential equations with state dependent delays,
           {\it Ann. Mat. Pura Appl.} {\bf 192} (2013), 17--47.

\bibitem{Hartung-Turi} F. Hartung and J. Turi,
     On differentiability of solutions with respect to parameters in state-dependent delay equations,
          {\it J. Differential Equations} {\bf 135} (1997), 192--237.

\bibitem{Henry1981} D. Henry,
    {\it Geometric Theory of Semilinear Parabolic Equations},
     Lecture Notes in Math., Vol. 840, Springer, Berlin, 1981.

\bibitem{Hill-Shafer-18} D. Hill and D. Shafer,
Asymptotics and stability of the delayed Duffing equation,
{\it J. Differential Equations} {\bf 265} (2018), 33--68.

\bibitem{Hirsch-Pugh} M. Hirsch and C. Pugh,
   Stable manifolds and hyperbolic sets,
    {\it Proc. Symp. Pure Math.} {\bf 14} (1970), 1133--1163.

\bibitem{Li-Kloeden} D.Li and P.E. Kloeden,
     Robustness of asymptotic stability to small time delays,
    {\it Discrete Contin. Dyn. Syst.} {\bf 13} (2005), 1007--1034.

\bibitem{Li-Shi} D. Li and L. Shi,
   Upper semicontinuity of attractors of stochastic delay reaction-diffusion equations in the delay,
    {\it J. Math. Phys.} {\bf 59} (2018),  032703.

\bibitem{Liu} K. Liu,
   Almost sure exponential stability sensitive to small time delay of stochastic neutral functional differential equations,
    {\it Appl. Math. Lett.} {\bf 77} (2018),  57--63.

\bibitem{Mao} X.R. Mao,
    Exponential stability of nonlinear differential delay equations,
     {\it Systems Control Lett.} {\bf 28} (1996), 159--165.

\bibitem{Ouifki} R. Ouifki and M.L. Hbid,
             Periodic solutions for a class of functional differential equations with state-dependent delay close to zero,
        {\it Math. Models Methods Appl. Sci.}  {\bf 13} (2003), 807--841.


\bibitem{Ryabov1965} Yu. A. Ryabov, Certain asymptotic properties of linear systems with small time lag,
 {\it Trudy Sem. Teoret. Differencial. Urav. Otklon. Argumentom Univ. Druzby Narodov Patrica Lummumby} {\bf 3} (1965), 153--164.
 [In Russian]

\bibitem{Wangetal} X. Wang, K. Lu and B. Wang,
        Long term behavior of delay parabolic equations with additive noise and deterministic time dependent forcing,
       {\it SIAM J. App. Dyn. Syst.} {\bf 14} (2015), 1018--1047.

\end{thebibliography}
{\footnotesize

}


\end{document}